\begin{document}

\pagenumbering{Roman}

\thispagestyle{empty}

\begin{titlepage}
\quad \vspace{4.5cm}
\begin{center}
\huge \bfseries Topological amenability \\
\vspace{0.5cm} \mdseries \large António Terra\\
\vspace{2cm}
\large Mathematics Section\\Master Thesis\\Autumn 2010\\
\vspace{7cm}
\begin{tabular}{lcr}
Supervisor : Professor Nicolas Monod
\end{tabular}
\end{center}
\end{titlepage}
\frontmatter
\tableofcontents
\chapter{Introduction}
This document is the report of a Master Thesis in Mathematics carried out during the semester of autumn 2010 under the supervision of Prof. Nicolas Monod. The work comprised two parts. The first one was to read and to apprehend the details of Monod's paper~\cite{AVENIR}, skipping the parts about bounded cohomology. Omitting the cohomological aspects, the main result of this paper is the following.
\begin{thm}
 Let $G$ be a group acting on a compact space X. The following are equivalent.
\begin{itemize}
 \item[(i)]The action of $G$ on $X$ is topologically amenable.
 \item[(ii)]Every dual $(G, X)$-module of type C is a relatively injective Banach $G$-module.
 \item[(iii)]There is a $G$-invariant element in $C(X, \ell^1G)^{**}$ summing to $\indicc{X}$.
 \item[(iv)]There is a norm one positive $G$-invariant element in $C(X,\ell^1G)^{**}$ summing to $\indicc{X}$.
\end{itemize}
\end{thm}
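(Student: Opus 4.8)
The plan is to establish the implications $(iv)\Rightarrow(iii)$, $(i)\Rightarrow(iv)$, $(iii)\Rightarrow(i)$, $(iii)\Rightarrow(ii)$ and $(ii)\Rightarrow(iii)$, so that $(i)\Leftrightarrow(iii)\Leftrightarrow(iv)$ comes from a weak-$*$-compactness package organized around the summing map $S\colon C(X,\ell^1G)\to C(X)$, $Sf=\bigl(x\mapsto\sum_{g}f(x)(g)\bigr)$, while $(ii)\Leftrightarrow(iii)$ is relative homological algebra. The arrow $(iv)\Rightarrow(iii)$ is trivial. For $(i)\Rightarrow(iv)$ I would start from the net $(\mu_\alpha)$ in $C(X,\ell^1G)$ of $\mathrm{Prob}(G)$-valued continuous functions provided by topological amenability (so $\|g\cdot\mu_\alpha-\mu_\alpha\|\to0$ for each $g$, each $\mu_\alpha$ positive, of norm one, with $S\mu_\alpha=\indicc{X}$), pass to a weak-$*$ cluster point $m\in C(X,\ell^1G)^{**}$ by Banach--Alaoglu, and check: invariance of $m$ (for fixed $g$ the bidual action is weak-$*$ continuous, being a double adjoint), positivity (the positive cone of the bidual is an intersection of weak-$*$-closed half-spaces), $S^{**}m=\indicc{X}$ ($S^{**}$ is weak-$*$ continuous and $S^{**}\mu_\alpha=\indicc{X}$), and then $1=\|\indicc{X}\|\le\|m\|\le\liminf_\alpha\|\mu_\alpha\|=1$.

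For $(iii)\Rightarrow(i)$, given an invariant $m$ with $S^{**}m=\indicc{X}$, I would use Goldstine to get a bounded net $\nu_\alpha\in C(X,\ell^1G)$ with $\nu_\alpha\to m$ weak-$*$; restricting test functionals to the predual, $g\cdot\nu_\alpha-\nu_\alpha\to0$ weakly in $C(X,\ell^1G)$ and $S\nu_\alpha-\indicc{X}\to0$ weakly in $C(X)$, for each $g$. Fixing a finite $F\subseteq G$ and $\varepsilon\in(0,1)$, Mazur's lemma yields a convex combination $\mu$ of the $\nu_\alpha$ with $\|g\cdot\mu-\mu\|<\varepsilon$ on $F$ and $\|S\mu-\indicc{X}\|_\infty<\varepsilon$, so $\|\mu(x)\|_1\ge|S\mu(x)|>1-\varepsilon$ at every $x$. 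Then $\rho(x):=|\mu(x)|/\|\mu(x)\|_1$ is a continuous $\mathrm{Prob}(G)$-valued function, and since $\xi\mapsto|\xi|/\|\xi\|_1$ is $\tfrac{2}{1-\varepsilon}$-Lipschitz on $\{\|\xi\|_1\ge1-\varepsilon\}$ and commutes with the (isometric, lattice) left $G$-action on $\ell^1G$, one gets $\|g\cdot\rho-\rho\|<\tfrac{2\varepsilon}{1-\varepsilon}$ on $F$; letting $(F,\varepsilon)$ run produces the net witnessing topological amenability.

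For $(iii)\Rightarrow(ii)$, let $E=(E_\flat)^*$ be a dual $(G,X)$-module of type C. Given an admissible injection $\iota\colon A\to B$ with a contractive, possibly non-equivariant, left inverse $\sigma$, and a $G$-morphism $\alpha\colon A\to E$, I would average the non-equivariant map $g\mapsto g\cdot\alpha(\sigma(g^{-1}b))$ against $m$, in the same way one averages against a left-invariant mean to show that amenability forces relative injectivity of dual modules. The $C(X)$-module structure of $E$ (the content of "type C") turns $g\mapsto g\cdot\alpha(\sigma(g^{-1}b))$, after pairing with $E_\flat$ via $\langle f\cdot v,\psi\rangle=\int_X f\,d\langle v,\psi\rangle_X$, into an element $\Phi_b\in C(X,\ell^1G)^{*}$; telescoping shows $\Phi_{\iota a}=S^{*}\langle\alpha(a),\cdot\rangle$ is constant in $g$ and $\Phi_{hb}$ is the natural $G$-twist of $\Phi_b$, whence $\beta(b):=\langle m,\Phi_b\rangle$ is a $G$-morphism with $\beta\iota=\alpha$ (using $S^{**}m=\indicc{X}$ and that $\indicc{X}$ acts as the identity of $E$) and $\|\beta\|\le\|m\|\,\|\alpha\|\le\|\alpha\|$. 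For $(ii)\Rightarrow(iii)$, I would apply $(ii)$ to $E:=C(X)^{*}$, itself a dual $(G,X)$-module of type C: since $S$ is a $G$-equivariant quotient with contractive section $\lambda f=(x\mapsto f(x)\delta_e)$, the adjoint $S^{*}\colon C(X)^{*}\to C(X,\ell^1G)^{*}$ is an admissible $G$-equivariant injection, so relative injectivity of $C(X)^{*}$ extends $\mathrm{id}_{C(X)^{*}}$ along $S^{*}$ to a $G$-morphism $R$ with $RS^{*}=\mathrm{id}$ and $\|R\|\le1$; then $m:=R^{*}(\indicc{X})\in C(X,\ell^1G)^{**}$ is $G$-invariant, has $\|m\|\le1$, and satisfies $S^{**}m=(RS^{*})^{*}(\indicc{X})=\indicc{X}$.

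The soft part $(i)\Leftrightarrow(iii)\Leftrightarrow(iv)$ is routine; the real obstacle is $(iii)\Rightarrow(ii)$. Making it rigorous requires the exact definition of "dual $(G,X)$-module of type C" at hand — a nondegenerate contractive $C(X)$-module structure with $\indicc{X}$ acting as the identity, the compatibility $g(\phi v)=(g\phi)(gv)$, and the dual-module axioms — since the construction of $\Phi_b$ as an element of $C(X,\ell^1G)^{*}$ (which is strictly smaller than the space of all bounded functions $G\to C(X)^{*}$, because $C(X,\ell^1G)$ is the space of continuous functions, i.e.\ an injective tensor product) is exactly where those hypotheses get consumed; checking that $C(X)^{*}$ and the ambient bidual really lie in this class, and pinning down the correct notion of admissible sequence, is the remaining bookkeeping.
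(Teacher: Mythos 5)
Your cycle $(i)\Rightarrow(iv)\Rightarrow(iii)\Rightarrow(i)$ is sound and even slightly cleaner than the paper's: by folding the condition $\|S\mu-\indicc{X}\|_\infty<\varepsilon$ into the Mazur/convex-combination step you bypass the paper's Proposition~\ref{auxiliaryProp1}, and in the discrete case Anker's theorem is not needed since compact means finite. Your $(iii)\Rightarrow(ii)$, though only sketched, is a legitimate variant of the paper's $(i)\Rightarrow(ii)$: the paper averages $g\mapsto g\cdot\alpha(\sigma(g^{-1}b))$ against the approximating densities $f_n$ and takes a weak-$*$ accumulation point of the resulting operators $\beta_{f_n}$, using Lemma~\ref{auxiliaryLemma1} (the integral form of the type~C inequality) exactly where you predict the type~C hypothesis is consumed; pairing with the limit object $m$ instead is workable. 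One caveat there: you cannot conclude $\|\beta\|\le\|m\|\,\|\alpha\|\le\|\alpha\|$ from $(iii)$ alone, since $(iii)$ does not bound $\|m\|$ by $1$; you must first pass to the normalized positive element of $(iv)$.

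The genuine gap is in $(ii)\Rightarrow(iii)$. You apply the hypothesis to $E=C(X)^*$, asserting it is a dual $(G,X)$-module of type~C. It is not: $C(X)$ is of type~C as a module over itself, so by Lemma~\ref{dualTypeC} its dual is of type~M, and type~C actually fails for $C(X)^*$ as soon as $X$ has two points. Concretely, take disjointly supported $\phi_1,\dots,\phi_n\ge 0$ in $C(X)$ with $\phi_i(x_i)=1$ and $v_i=\delta_{x_i}\in C(X)^*$; then $\phi_i v_i=\delta_{x_i}$, so $\|\phi_1v_1+\cdots+\phi_nv_n\|=n$ while $\|\phi_1+\cdots+\phi_n\|_\infty\cdot\max_i\|v_i\|=1$. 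Hence $(ii)$ gives you no equivariant retraction $R$ of $S^*$, and the step collapses. The paper works on the kernel side instead: the bidual of $\ker S$ (the maps summing to zero) \emph{is} a dual $(G,X)$-module of type~C (type~C passes to submodules of $C(X,\ell^1G)$ and to biduals by Lemma~\ref{dualTypeC}), its inclusion into $C(X,\ell^1G)^{**}$ admits the bounded non-equivariant left inverse induced by $f\mapsto f-Sf\otimes\psi$, so relative injectivity produces an equivariant retraction onto the kernel, and Corollary~\ref{G_split} converts that into an equivariant section of $S^{**}$, whose value at $j(\indicc{X})$ is the desired invariant element. Your framework survives verbatim once $C(X)^*$ is replaced by this module.
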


In the theorem above, the group $G$ is endowed with the discrete topology. The second part of the work was to formulate and prove an equivalent result for a general locally compact topology on $G$. It has been achieved by replacing every occurrence of $\ell^1G$ in the above statement by $L^1G$. The precise statement and the proof of this result is the object of Section~\ref{result:Sect}, Chapter~\ref{result:Ch}. For readers familiar to cohomology, an extended result including cohomological aspects is stated and briefly proved at the end of this introduction. See also~\cite{Autres} for related results, excluding however relative injectivity.

This report is written in such a way that any student finishing mathematics studies can read and understand almost everything. In order to reach this goal, the reader is introduced to some non standard topics. This material represents the first three chapters of the document. From the self-containedness point of view, this report suffers from two weaknesses. The biggest one is the lack of a chapter devoted to locally compact topological groups. However, only basic facts (that can easily be found in the abundant literature about analysis on groups) are used. The second weakness is the use of an identification of normed vector spaces in the proof of Theorem~\ref{main_result}. More precisely, the fact that for any Banach spaces $B$ and $V$ the space of bounded linear maps from $B$ to $V^*$ is isometrically isomorphic to the dual of the projective tensor product of $B$ and $V$ is used. In other words, we use the identification $\mathcal{L}(B, V^*)\cong (B \hat{\otimes} V)^*$. Since normed tensor products occur in several arguments in~\cite{AVENIR}, the topic has been studied during the semester. However, in the general locally compact setting, the proofs avoid almost totally tensor products and therefore adding an entire chapter about them seemed disproportionate.

Before briefly explaining the content of the subsequent chapters, some categorical facts should be mentioned. They may enlighten the central definition of this work (that is to say the definition of an \emph{amenable transformation group}) and lead to one of the main reasons for the study of amenable transformation groups. Let $G$ be a locally compact topological group. The category of compact $G$-spaces has both an initial and a terminal object, namely the Stone-Cech compactification of the group $\beta G$ and the one point set $\{\bullet\}$. If $X$ and $Y$ are two compact $G$-spaces with an arrow from $X$ to $Y$, then the amenability of the transformation group $(G, Y)$ implies the amenability of $(G, X)$ via the following diagram.
$$
\xymatrix{ X \ar[rr] \ar@{.>}[rd] && Y \ar[ld] \\ & \mathrm{Prob}(G)}
$$
Thus, the amenability at the terminal object of the category implies that every transformation group is amenable. This also implies that the amenability anywhere in the category implies the amenability of the transformation group $(G, \beta G)$, since every compact $G$-space is the end of an arrow starting at $\beta G$. From that point of view, the definition of an amenable transformation group is not confusing anymore, since it really gives information about the group $G$ itself (through $\beta G$).

Chapter~\ref{lattices:Ch} is an introduction to the theory of Banach lattices. We start with general vector lattices (which are nothing but vector spaces endowed with a compatible lattice order), give the basic computational tools and properties and quickly move to normed lattices. The latter are vector lattices whose underlying vector spaces is endowed is an order compatible norm. Asking these normed spaces to be complete, we obtain Banach lattices. The theory is pursued up to basic facts about dual and double dual spaces of Banach lattices. In particular, it is shown how to endow dual spaces with vector lattices structures and that the canonical embedding of a Banach space into its double dual is a Banach lattice isomorphism.

Chapter~\ref{integral:Ch} is devoted to the Bochner integral. This integral allows to integrate maps from a measure space into a Banach space. The approach is quite usual for that kind of topic. In the first section of the chapter, integrable maps are identified and in the second one, their integral is defined. The last section gives the main properties about the Bochner integral : commutativity with bounded linear maps, an estimate about the norm of an integral, dominated convergence and Fubini's theorems.

Chapter~\ref{modules:Ch} finally gets closer to the main subject of the thesis, since it introduces the structures and spaces that occur in the statement of the main result (Theorem~\ref{main_result}). Let $G$ be a topological group and $X$ a continuous compact $G$-space. The goal of the first section of Chapter~\ref{modules:Ch} is to show, after having defined it, that the diagonal action of $G$ on $C(X, L^1G)$ is continuous. The second section is mainly a collection of definitions, among which the definitions of Banach $G$-modules, $(G, X)$-modules and relative injectivity $G$-modules. In a paper from 1980, Anker proved in an elegant way the equivalence between Properties $(P_1)$ and $(P_1^*)$ (see Section~\ref{Anker:Sect} for definitions), which ensure the existence in $L^1G$ of $\epsilon$-invariant vectors under the action of compacts, respectively finite, subsets of $G$. The last section of Chapter~\ref{modules:Ch} defines similar properties for the space $C(X, L^1G)$ and shows that they are also equivalent.

Chapter~\ref{result:Ch} is the reason of all the other chapters.First, there is a precise definition of what an amenable transformation group is. Then, an equivalent definition (that will be the one used in the proof of Theorem~\ref{main_result}) is given. Roughly speaking, the equivalence is based on the fact that the probability measures that occur in the definition of amenability can be taken to be density measures with respect to the Haar measure of the group. Then follows a miscellanea of auxiliary results that are useful in the final proof. The last section of the report is the statement and the proof of the result stated above for a general locally compact topology on the group $G$ and with $L^1G$ instead of $\ell^1G$.

For the sake of self-containedness, Theorem~\ref{main_result} does not mention the link with cohomology. However, for readers used to the cohomological language, we briefly explain here how to obtain an extended theorem that includes the cohomological issue. With the assumption of Chapter~\ref{result:Ch} on the spaces $G$ and $X$, the extended result is the following.

\begin{thm}\label{ext_main_result}
 Let $(G, X)$ be a transformation group, where $X$ is compact. The following are equivalent.
\begin{itemize}
 \item[(a)]$(G, X)$ is an amenable transformation group.
  \item[(b)]Every dual $(G, X)$-module of type C is a relatively injective Banach $G$-module.
  \item[(c)]$\mathrm{H}^n_\mathrm{b}(G, E^*)= 0$ for every $(G, X)$-module $E$ of type M and every $n \geq 1$.
  \item[(d)]There is $G$-invariant element in $C(X, L^1G)^{**}$ summing to $\indicc{X}$ in $C(X)^{**}$.
  \item[(e)]There is a norm one positive $G$-invariant element in $C(X, L^1G)^{**}$ summing to $\indicc{X}$ in $C(X)^{**}$.
\end{itemize}
\end{thm}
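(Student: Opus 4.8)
The plan is to derive Theorem~\ref{ext_main_result} from Theorem~\ref{main_result} by threading the cohomological item (c) into the cycle of implications. The equivalence $(a)\Leftrightarrow(b)\Leftrightarrow(d)\Leftrightarrow(e)$ is exactly Theorem~\ref{main_result} (it is the $L^1G$-version of the theorem stated at the beginning of this introduction), so it suffices to establish $(b)\Rightarrow(c)$ and $(c)\Rightarrow(d)$; together with Theorem~\ref{main_result} this closes the cycle. For the second implication only the case $n=1$ of (c) will actually be used.

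For $(b)\Rightarrow(c)$: let $E$ be a $(G,X)$-module of type M. By the material of Chapter~\ref{modules:Ch}, its dual $E^*$ is a dual $(G,X)$-module of type C, hence relatively injective by (b). Then the augmented complex $0\to E^*\xrightarrow{\ \mathrm{id}\ }E^*\to 0\to 0\to\cdots$ is a strong resolution of $E^*$ (the contracting homotopy is the identity in degree $0$ and zero elsewhere) by relatively injective Banach $G$-modules, and therefore computes $\mathrm{H}^\bullet_\mathrm{b}(G,E^*)$. Taking $G$-invariants gives $\mathrm{H}^0_\mathrm{b}(G,E^*)=(E^*)^G$ and $\mathrm{H}^n_\mathrm{b}(G,E^*)=0$ for every $n\geq 1$, which is (c).

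For $(c)\Rightarrow(d)$: consider the summing map $S\colon C(X,L^1G)\to C(X)$, $(Sf)(x)=\int_G f(x)(g)\,dg$. It is a morphism of $(G,X)$-modules of type M (equivariance of $S$ uses only translation invariance of the Haar measure, up to the usual modular-function bookkeeping), and it admits the bounded linear section $\psi\mapsto\psi\otimes h$, i.e.\ $x\mapsto\psi(x)h$, for any fixed $h\in L^1G$ with $\int_G h=1$. Hence $0\to K\to C(X,L^1G)\xrightarrow{\ S\ }C(X)\to 0$ is an admissible short exact sequence, where $K=\ker S$. Adjoining twice preserves admissibility and yields the admissible short exact sequence of coefficient modules
$$0\longrightarrow K^{**}\longrightarrow C(X,L^1G)^{**}\xrightarrow{\ S^{**}\ }C(X)^{**}\longrightarrow 0,$$
where $S^{**}$ is precisely the sum map on the biduals. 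Its long exact sequence in bounded cohomology contains
$$\mathrm{H}^0_\mathrm{b}\bigl(G,C(X,L^1G)^{**}\bigr)\longrightarrow\mathrm{H}^0_\mathrm{b}\bigl(G,C(X)^{**}\bigr)\xrightarrow{\ \delta\ }\mathrm{H}^1_\mathrm{b}\bigl(G,K^{**}\bigr).$$
Since $K^{*}$ is, by the definitions of Chapter~\ref{modules:Ch}, again a $(G,X)$-module of type M (a quotient of the type M module $C(X,L^1G)^{*}$), the group $\mathrm{H}^1_\mathrm{b}(G,K^{**})$ vanishes by (c); hence $\delta=0$ and the first arrow is surjective. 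As $\indicc{X}\in C(X)\subseteq C(X)^{**}$ is $G$-invariant, it lifts to a $G$-invariant $\mu\in C(X,L^1G)^{**}$ with $S^{**}\mu=\indicc{X}$, which is statement (d).

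The difficulties here are bookkeeping rather than conceptual. One must verify that $S$ is equivariant for the diagonal action, and that $K$, $C(X,L^1G)$, $C(X)$ and the duals $K^{*}$, $C(X,L^1G)^{*}$, $C(X)^{*}$ all lie within the module types of Chapter~\ref{modules:Ch}, so that their further duals are legitimate coefficient modules and (c) applies to them; this is exactly where the precise definitions of type M and type C do the work, and I expect phrasing these verifications cleanly enough that the long exact sequence in $(c)\Rightarrow(d)$ is unambiguously applicable to be the main obstacle. One must also import the homological apparatus of bounded cohomology — strong relatively injective resolutions, the fundamental lemma, and the long exact sequence of an admissible short exact sequence of coefficient modules — which is the material deliberately omitted from the main body but, for the cohomologically literate reader, entirely standard.
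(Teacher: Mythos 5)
Your proof is correct and follows essentially the same route as the paper: the non-cohomological equivalences are delegated to Theorem~\ref{main_result}, (b) $\Rightarrow$ (c) is the trivial-resolution argument for relatively injective coefficients, and (c) $\Rightarrow$ (d) uses the very same short exact sequence $0 \to C(X, L^1_0G) \to C(X, L^1G) \to C(X) \to 0$ with the section $h \mapsto h \otimes \psi$, its bidual, and the long exact sequence in bounded cohomology (the paper's citation of Theorem 8.2.7 of~\cite{These_Monod}). The only cosmetic difference is your justification that $K^*$ is of type M via a quotient argument, where the paper's own lemmas give it more directly ($K = C(X, L^1_0G)$ is a type C submodule of $C(X, L^1G)$, so $K^*$ is of type M by Lemma~\ref{dualTypeC}).
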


The strategy of the proof is (a) $\Rightarrow$ (b) $\Rightarrow$ (c) $\Rightarrow$ (d) $\Rightarrow$ (a) and (a) $\Leftrightarrow$ (e). According to Theorem~\ref{main_result} it suffices to show the implications (b) $\Rightarrow$ (c) $\Rightarrow$ (d) to obtain the above theorem. (A direct proof of (b) $\Rightarrow$ (d) is available in Chapter~\ref{result:Ch}) That (b) implies (c) is due to the fact that cohomology with values in relatively
injective modules vanishes in every positive degree (e.g. because the module is a trivial resolution
of itself. See~\cite{These_Monod}, Proposition 7.4.1.). To show that (c) implies (d), consider the short exact sequence
$$
0 \longrightarrow C(X, L^1_0G) \overset{\iota}{\longrightarrow} C(X, L^1G) \overset{I}{\longrightarrow} C(X) \longrightarrow 0,
$$
where $I$ is the continuous linear map that sends $f$ in $C(X, L^1G)$ to $x \mapsto \int_G f(x, t) \, d\lambda(t)$ and $C(X, L^1_0G)$ its kernel. Choose any positive norm one element $\psi$ in $L^1G$ and consider the map from $C(X)$ to $C(X, L^1G)$ defined by $h \mapsto h \otimes \psi$, where $h \otimes \psi(x) = h(x) \psi$. Observing that this map is a continuous linear section of $I$ and passing to the bidual sequence, we can conclude by Theorem 8.2.7 of~\cite{These_Monod}.

We conclude this introduction with the following warning. In this document, all the topological spaces the reader will come across have the Hausdorff property.
\chapter{Acknowledgments}
First of all, I naturally would like to thank Professor Nicolas Monod, who has supervised my Master thesis. Nicola-san, arigatoo gosaimasu ! Every time I went into his office, I came out less ignorant about much more than I first expected to. Enlightening, honest and non pedantic answers have been the rule during three months. I am thankful for the total freedom he left me in my work.

I also would like to thank two people for their localized but useful (and also enjoyable) help. I am thinking to Professor Clair Anantharaman-Delaroche, who answered to the newbie e-mail I sent her and to Professor Antoine Derighetti, who showed me in a record time how much some problems have to be solved.

Finally, in order to honor my promise, I thank Kathleen Chevalley, who has surely helped me for this work. Somehow, being unable to find a mathematically aware person to talk to at home is not such a terrible thing.

\mainmatter
\pagenumbering{arabic}

\chapter{Banach lattices}\label{lattices:Ch}
  This chapter aims to introduce - in a self contained way - the minimal knowledge about Banach lattices enabling the reader to understand the subsequent developments. The main goal is to establish that the topological dual of a Banach lattice is also a Banach lattice. This is why we will quickly turn to maps between vector lattices and dual spaces. Though it will not be used in the subsequent development, it will be established at the end of the chapter - because it is of general interest - that the canonical embedding of a Banach lattice into its double dual is a lattice homomorphism.

Inspiration was mainly drawn from~\cite{metric} for the straightforward approach and from~\cite{scha} for details.

\section{Definitions and basic properties}

Before introducing Banach lattices, we develop some background theory about general vector lattices.

\begin{defn}
 A lattice $(V, \leq)$ is a \emph{vector lattice}\index{vector lattice} if the underlying set $V$ is a vector space over $\R$ such that addition and multiplication by nonnegative scalars preserve the order. In other words, for every $x, y$ in $V$, the relation $x \leq y$ implies that $x \pm z \leq y \pm z$ and $\lambda x \leq \lambda y$ for every $z$ in $V$ and every $\lambda > 0$. Because of a misuse of language, the order is generally not referred to and $V$ is said to be a vector lattice.
\end{defn}

Let $V$ be a vector lattice. For two elements $x, y$ in $V$, we denote by $x \supr y$\index{max@$\supr$} and $x \infi y$\index{max@$\infi$} the supremum, respectively the infimum, of the set $\{x, y\}$. The supremum and infimum are called the \emph{lattice operations}\index{lattice operations}. We define the positive and negative parts of $x$ by $x^+ := x \supr 0$\index{x plus@$x^+$} and $x^-:=0 \supr -x$\index{x minus@$x^-$}. We also define the \emph{absolute value} of $x$ by $|x|:=x^+ \supr x^-$. Finally, we define the \emph{positive cone} of $V$ by $V^+:=\{x \in V : x \geq 0\}$\index{V plus@$V^+$}.

\begin{prop}\label{prorietes_lattices}
 Let $V$ be a vector lattice. For every $x, x', y, y', z$ in $V$ and every non empty subset $A$ of $V$, the following statements hold.
\begin{itemize}
  \item[(a)]If $x \leq y$, then $-x \geq -y$.
  \item[(b)]For $\lambda >0$, the equalities $- \sup A = \inf (-A)$ and $\sup(\lambda A) = \lambda \sup(A)$ hold, whenever one of their sides exists.
  \item[(c)]The equality $z + \sup A = \sup(z + A)$ holds, whenever one of its sides exists. The infimum analogue is also true.
  \item[(d)]The equality $x+y = (x \supr y) + (x \infi y)$ holds.
  \item[(e)]The equalities $x = x^+ - x^-$, $x^+ \infi x^- = 0$ and $|x| = x^+ + x^-$ hold.
  \item[(f)]We have the following unique decomposition : if $x= u - v$ with $u, v$ in $V^+$ and $u \infi v = 0$, then $u = x^+$ and $v = x^-$.
  \item[(g)]We have $|x|= 0$ if and only if $x = 0$.
  \item[(h)]For every $\lambda$ in $\R$, the equality and inequality $|\lambda x | = |\lambda| |x|$ and $|x + y| \leq |x| + |y|$ hold.
  \item[(i)]The equality $|x - y| = (x \supr y) - (x \infi y)$ holds.
  \item[(j)]We have the following distributive laws : if $\{x_\alpha\}_{\alpha \in B}$ and $\{y_\alpha\}_{\alpha \in B}$ are two subset of $V$ such that $\sup_\alpha x_\alpha $ and $\inf_\alpha y_\alpha $ exist, then $z \infi\sup_\alpha x_\alpha= \sup_\alpha (x_\alpha \infi z)$ and  $z \supr \inf_\alpha y_\alpha = \inf_\alpha (y_\alpha \supr z)$.
  \item[(k)]The equality $|x| = x \supr -x$ holds.
  \item[(l)]The two equalities $|(x \supr y )-(x' \supr y')| \leq |x- x'| + |y - y'|$ and $|(x \infi y )-(x' \infi y')| \leq |x- x'| + |y - y'|$ hold.
\end{itemize}
\end{prop}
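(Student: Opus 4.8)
The plan is to derive every item from just the two order-compatibility axioms in the definition, arranging the list so that each part uses only the previous ones. The skeleton is: (a)--(c) record that negation, positive scaling, and translation interact well with suprema and infima; (d) is the key algebraic identity from which the positive/negative-part statements (e)--(g) follow; and the remaining items (h)--(l) are then formal consequences of (a)--(g).

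I would start with (a), which is immediate (add $-x-y$ to $x\le y$). From it, $v\mapsto -v$ reverses the order while $v\mapsto\lambda v$ (for $\lambda>0$) and $v\mapsto z+v$ preserve it and are bijective; since an order isomorphism carries suprema to suprema and an order anti-isomorphism carries them to infima, (b) and (c) follow, existence statements included. Next comes (d): putting $s:=x\supr y$, from $x,y\le s$ one gets $x+y-s\le x$ and $x+y-s\le y$, hence $x+y-s\le x\infi y$; conversely from $x\infi y\le x$ and $x\infi y\le y$ one gets $x+y-(x\infi y)\ge x\supr y$; combining the two and translating with (c) yields $x+y=(x\supr y)+(x\infi y)$. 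Then, using (b) to rewrite $-(x^-)=x\infi 0$, (d) applied to $x$ and $0$ gives $x^+-x^-=x$; writing $x^-=x^+-x$ and translating gives $x^+\infi x^-=0$; and then $|x|=x^+\supr x^-=x^++x^--(x^+\infi x^-)=x^++x^-$ again by (d). Uniqueness in (f) follows by observing $u\ge x^+$ and $v\ge x^-$, writing $u=x^++w$, $v=x^-+w$, and using (c) to get $0=u\infi v=(x^+\infi x^-)+w=w$; and (g) holds because $x^+=-x^-\le 0\le x^+$ forces $x^\pm=0$.

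The remaining items are bookkeeping on top of this. For (h): $\lambda\ge 0$ gives $(\lambda x)^\pm=\lambda x^\pm$ by (b), and $(-v)^\pm=v^\mp$ gives $|{-}v|=|v|$, covering $\lambda<0$; the triangle inequality comes from $\pm(x+y)\le |x|+|y|$ together with (k). For (k), with $a=x^+$ and $b=x^-$, translating $x\supr(-x)$ by $a+b$ and applying (b) produces $2(a\supr b)$, while $a+b=a\supr b$ by (d) and (e), so $x\supr(-x)=a\supr b=|x|$. For (i), (b) and (c) turn $(x\supr y)-(x\infi y)$ into $((x\supr y)-x)\supr((x\supr y)-y)=(y-x)^+\supr(x-y)^+=(x-y)^-\supr(x-y)^+=|x-y|$. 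For (j), with $s=\sup_\alpha x_\alpha$: $z\infi x_\alpha\le z\infi s$ is clear, and if $w$ dominates all $z\infi x_\alpha$ then (d) gives $x_\alpha=(x_\alpha\supr z)+(x_\alpha\infi z)-z\le(s\supr z)+w-z$, whence $s\le(s\supr z)+w-z$, i.e. $s\infi z\le w$ by (d) again; the infimum law follows by feeding $\{-y_\alpha\}$ and $-z$ into this and using (b). Finally for (l): $x\le x'+|x-x'|$ and $y\le y'+|y-y'|$, together with monotonicity of $\supr$ and (c), give $x\supr y\le(x'\supr y')+|x-x'|+|y-y'|$; swapping the roles of the primed and unprimed variables and invoking (k) yields the $\supr$ bound, and the $\infi$ version follows from $x\infi y=-((-x)\supr(-y))$ and $|{-}v|=|v|$.

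The only step with real content is (d); once it is in place, everything else is obtained by translating (via (c)) and negating (via (b)). In particular the distributive law (j)---the property that separates vector lattices from arbitrary lattices---is reduced to (d) by precisely such a translation, so that reduction is the step I would be most careful about.
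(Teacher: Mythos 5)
Your proposal is correct, but there is nothing in the paper to compare it against: the author explicitly declines to prove Proposition~\ref{prorietes_lattices}, stating that the properties are listed only for the reader's convenience and referring to Schaefer's book~\cite{scha} for the proofs. Your argument therefore supplies what the paper omits, and it does so soundly. The organisation is the standard one and every step checks out: (a)--(c) via the observation that negation is an order anti-isomorphism while positive scaling and translation are order isomorphisms (which handles the existence clauses for free); the identity $x+y=(x\supr y)+(x\infi y)$ in (d) proved by the two-sided bound on $x+y-(x\supr y)$; the parts (e)--(g) and (k) derived from (d) by specialising to $\{x,0\}$ and to $\{x^+,x^-\}$; and (h), (i), (l) as routine translations. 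The one step that deserves the care you flag is (j), and your reduction is correct: $z\infi s$ is an upper bound of the $x_\alpha\infi z$ by the definition of infimum, and for any other upper bound $w$ the identity $x_\alpha=(x_\alpha\supr z)+(x_\alpha\infi z)-z$ from (d) gives $x_\alpha\le(s\supr z)+w-z$ for all $\alpha$, hence $s\le(s\supr z)+w-z$, i.e.\ $s\infi z\le w$; this establishes existence of the supremum as well as the equality. The only stylistic caveat is that (h) cites (k), which appears later in the list; since your proof of (k) uses only (b), (d) and (e), there is no circularity, but reordering (or a one-line remark, as you in effect give) would make that explicit.
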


This proposition will not be proved. These properties are given so that the reader can follow without any troubles the subsequent discussion. The proofs of these properties can be found in~\cite{scha}.

Let $V$ be a vector lattice. If $x$ and $y$ belong to $V$, we define $[x, y]$\index{interval@$[x, y]$} to be the set of all the elements $z$ in $V$ such that $x \leq z \leq y$. With this notation, we have the following decomposition property.

\begin{prop}[Decomposition property\index{decomposition property}]\label{decomposition_property}
If $x$ and $y$ are two positive elements of a vector lattice, then $[0, x+y] = [0, x] + [0, y]$
\end{prop}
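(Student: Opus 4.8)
The plan is to prove the two inclusions $[0,x]+[0,y]\subseteq[0,x+y]$ and $[0,x+y]\subseteq[0,x]+[0,y]$ separately, the first being essentially immediate and the second being the real content.

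For the easy inclusion, suppose $u\in[0,x]$ and $v\in[0,y]$, so $0\le u\le x$ and $0\le v\le y$. Adding these inequalities (using that addition preserves the order, together with part (c) of Proposition~\ref{prorietes_lattices} or simply the defining axioms of a vector lattice) gives $0\le u+v\le x+y$, hence $u+v\in[0,x+y]$. So $[0,x]+[0,y]\subseteq[0,x+y]$.

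For the hard inclusion, let $z\in[0,x+y]$, i.e. $0\le z\le x+y$. The idea is to write $z=u+v$ with $u\in[0,x]$ and $v\in[0,y]$ by an explicit choice. I would set $u:=z\infi x$ and $v:=z-u=z-(z\infi x)$. Then $u=z\infi x\ge 0$ since both $z\ge 0$ and $x\ge 0$, and $u=z\infi x\le x$, so $u\in[0,x]$. It remains to check $v\in[0,y]$. For $v\ge 0$: since $z\ge z\infi x$ we get $v=z-(z\infi x)\ge 0$. For $v\le y$: using part (c) of Proposition~\ref{prorietes_lattices} (translation compatibility of the infimum), $z\infi x = (z\infi x)$ and we compute $v = z - (z\infi x)$; translating by $-z$ inside the infimum gives $z\infi x = z + (0 \infi (x-z))$, so $v = z - (z\infi x) = -\big(0\infi(x-z)\big) = 0\supr(z-x)$ by part (b). Then $z-x\le y$ (from $z\le x+y$) and $0\le y$ give $v=0\supr(z-x)\le y$. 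Hence $v\in[0,y]$ and $z=u+v\in[0,x]+[0,y]$.

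The main obstacle is the verification that the chosen $v$ lies in $[0,y]$; the crux is recognizing the identity $z-(z\infi x)=0\supr(z-x)$, which follows from the translation-invariance of the lattice operations (parts (b) and (c) of Proposition~\ref{prorietes_lattices}), after which the bound $v\le y$ reduces to the hypothesis $z\le x+y$. Everything else is a direct application of the order axioms, so once this identity is in hand the proof closes quickly.
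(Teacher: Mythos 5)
Your proof is correct and follows essentially the same route as the paper: the same decomposition $u := z \infi x$, $v := z - u$, and the same key identity $v = (z-x)\supr 0$ obtained from the translation compatibility of the lattice operations, after which $z \le x+y$ yields $v \le y$. The extra verifications of $u \ge 0$ and $v \ge 0$ are just details the paper leaves implicit.
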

\begin{proof}
 Let $x$ and $y$ be two positive elements of a vector lattice. It is clear that $[0, x] + [0, y] \subseteq [0, x+y]$. Let us prove the other inclusion. Let $z$ belong to $[0, x+y]$. Define $u := x \infi z$ and $v := z-u$. These definitions imply that $u$ belongs to $[0, x]$ and $u + v = z$. The proposition will be proved once shown that $v$ belongs to $[0, y]$. Replacing $u$ in the definition of $v$ by $x \infi z$ leads to $v = (z-x)\supr 0$ (by parts (b) and (c) of Proposition~\ref{prorietes_lattices}) and since $z \leq x + y$, we have $v \leq (x+y-x) \supr 0 = y$.
\end{proof}

\begin{cor}\label{cor_decomposition_property}
 If $x, y$ and $z$ are positive elements of a vector lattice, then $(x+y)\infi z \leq (x\infi z) + (y \infi z)$.
\end{cor}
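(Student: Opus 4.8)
The plan is to derive the inequality directly from the decomposition property (Proposition~\ref{decomposition_property}). The first step is to check that the element $w := (x+y) \infi z$ lies in the order interval $[0, x+y]$. Indeed, $w \leq x+y$ by definition of the infimum, and $w \geq 0$ since $w$ is the infimum of two positive elements ($x+y \geq 0$ because the positive cone is stable under addition, and $z \geq 0$ by hypothesis). Hence $w \in [0, x+y]$.

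Applying Proposition~\ref{decomposition_property}, which gives $[0, x+y] = [0, x] + [0, y]$, I can then write $w = u + v$ with $u \in [0, x]$ and $v \in [0, y]$. The next step is to observe that in fact $u \leq x \infi z$ and $v \leq y \infi z$: we have $u \leq x$ directly, and since $v \geq 0$ we get $u \leq u + v = w \leq z$, so $u$ is below both $x$ and $z$, whence $u \leq x \infi z$; the inequality $v \leq y \infi z$ follows by the symmetric argument.

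Finally, adding the two inequalities — which preserves the order in a vector lattice — yields
$$
(x+y) \infi z = w = u + v \leq (x \infi z) + (y \infi z),
$$
which is exactly the claimed corollary. The argument is essentially routine; the only point requiring a moment's care is the verification that $w \geq 0$, so that the decomposition property may legitimately be invoked.
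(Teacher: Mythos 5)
Your proof is correct and follows essentially the same route as the paper's: both apply the decomposition property to $(x+y)\infi z$ viewed as an element of $[0,x+y]$ and then bound each summand by the corresponding infimum. Your write-up is in fact slightly more careful than the paper's (you verify $w\geq 0$ explicitly, and the paper's final displayed inequality contains a typo where yours is stated correctly).
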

\begin{proof}
 Let $x, y$ and $z$ be positive elements of a vector lattice and define $v:=(x+y) \infi z$. This definition implies that $v$ belongs to $[0, x+y]$ and by the decomposition property, we can find $v_1$ in $[0, x]$ and $v_2$ in $[0, y]$ such that $v = v_1+v_2$. Since $v_1 \leq v_1 + v_2 = (x+y) \infi z$, we have $v_1 \leq x\infi z$. Similarly, $v_2 \leq y \infi z$. Hence, we find that $v_1 + v_2 \leq (x+z) \infi (y+z)$.
\end{proof}

When the underlying vector space of a vector lattice is a normed space, we may wish to have some kind of compatibility between the order and the norm structures. This gives rise to the definition of normed lattices.

\begin{defn}
 Let $(V, \leq)$ be a vector lattice. A norm on $V$ is a \emph{lattice norm}\index{lattice norm} if $|x| \leq |y|$ implies that $\|x\| \leq \|y\|$ for every $x$ and $y$ in $V$. If $\|\cdot\|$ is a lattice norm on $V$, it is said that $(V, \leq, \|\cdot\|)$ or $(V, \|\cdot\|)$ or even just $V$ is a \emph{normed lattice}\index{normed lattice}. If $(V, \|\cdot\|)$ is a normed lattice and if moreover $V$ is complete with respect to $\|\cdot\|$, then $V$ is a \emph{Banach lattice}\index{Banach lattice}.
\end{defn}

The following proposition gives an alternative definition of normed lattices. It also says that in normed lattices, taking absolute values preserves the norm.

\begin{prop}\label{prop1.01.12}
 If $V$ is a normed lattice, then for every $x$ and $y$ in $V$, we have $\| x \| = \|\,|x|\,\|$ and if $0 \leq x \leq y$, then $\|x\| \leq \|y\|$. Conversely, if $V$ is a vector lattice endowed with a norm that verifies both of the previous conditions, then $V$ is a normed lattice.
\end{prop}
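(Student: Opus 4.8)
The plan is to prove the two stated properties first and then the converse. For the forward direction, assume $V$ is a normed lattice. To get $\|x\| = \|\,|x|\,\|$, I would use part (k) of Proposition~\ref{prorietes_lattices}, or more directly the observation that $|\,|x|\,| = |x|$: indeed $|x| \geq 0$, so its positive part is itself and its negative part is $0$, whence $|\,|x|\,| = |x|^+ + |x|^- = |x|$ by part (e). Since trivially $|x| \leq |\,|x|\,|$ and $|\,|x|\,| \leq |x|$, the lattice norm axiom applied in both directions gives $\|x\| \leq \|\,|x|\,\|$ and $\|\,|x|\,\| \leq \|x\|$, hence equality. For the second property, suppose $0 \leq x \leq y$. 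Then $|x| = x$ and $|y| = y$ (a nonnegative element equals its own absolute value, again by part (e)), so $|x| = x \leq y = |y|$, and the lattice norm axiom directly yields $\|x\| \leq \|y\|$.

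For the converse, assume $V$ is a vector lattice with a norm satisfying both conditions, and suppose $|x| \leq |y|$. I want to conclude $\|x\| \leq \|y\|$. Applying the first condition, $\|x\| = \|\,|x|\,\|$ and $\|y\| = \|\,|y|\,\|$, so it suffices to show $\|\,|x|\,\| \leq \|\,|y|\,\|$. But now $0 \leq |x| \leq |y|$ holds (both absolute values are in $V^+$ by part (g) and the hypothesis $|x| \leq |y|$), so the second condition applies and gives $\|\,|x|\,\| \leq \|\,|y|\,\|$. Chaining the equalities and this inequality yields $\|x\| \leq \|y\|$, which is exactly the lattice norm axiom.

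There is no serious obstacle here; the only point requiring a moment's care is the identity $|\,|x|\,| = |x|$, i.e.\ that the absolute value of a positive element is itself, which is immediate from the unique decomposition in part (f) of Proposition~\ref{prorietes_lattices} (write $|x| = |x| - 0$ with $|x| \infi 0 = 0$). Everything else is a direct application of the two hypothesized conditions together with the elementary facts collected in Proposition~\ref{prorietes_lattices}.
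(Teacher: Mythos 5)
Your proof is correct and follows essentially the same route as the paper: the identity $|\,|x|\,| = |x|$ gives $\|x\| = \|\,|x|\,\|$, positivity gives the monotonicity statement, and the converse chains $\|x\| = \|\,|x|\,\| \leq \|\,|y|\,\| = \|y\|$ exactly as in the paper's argument. (Only a trivial quibble: $|x| \geq 0$ follows from $x^+ = x \supr 0 \geq 0$ rather than from part (g), but this does not affect anything.)
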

\begin{proof}
 Assume that $V$ is a normed lattice. Since taking twice the absolute value is the same as taking it once, the very definition of a lattice norm implies that $\|x\| = \| \,|x|\,\|$ for every $x$ in $V$. The second fact stated above follows from the fact that the absolute value of any positive element is the element itself.

Suppose now that $V$ is a vector lattice and that $\|\cdot\|$ is a norm on $V$ satisfying both of the conditions in the statement of the proposition. Let $x$ and $y$ in $V$ be such that $|x| \leq |y|$. Because $0 \leq |x|, |y|$, our assumptions imply that $\| x \| = \| \, |x| \, \| \leq \| \, |y| \, \| = \| y\|$.
\end{proof}

\begin{prop}\label{properties_normes_lattices}
 Let $V$ be a normed lattice. The following properties hold.
\begin{itemize}
  \item[(a)]The lattice operations $\supr$ and $\infi$ are uniformly continuous.
  \item[(b)]The positive cone $V^+$ is closed for the norm topology.
  \item[(c)]If $\{x_n\}_{n \in \N}$ is an increasing and converging sequence of $V$, then its supremum exists and is equal to its limit.
\end{itemize}
\end{prop}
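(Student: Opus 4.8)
The plan is to deduce all three statements from part~(l) of Proposition~\ref{prorietes_lattices} together with the identity $\|\,|z|\,\|=\|z\|$ of Proposition~\ref{prop1.01.12}; once~(a) is established, (b) follows formally, and then~(c) follows from~(a) and~(b). In particular the three items should be treated in this order.

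\emph{Part (a).} I would start from the inequality $|(x\supr y)-(x'\supr y')|\le|x-x'|+|y-y'|$ of Proposition~\ref{prorietes_lattices}(l). Applying $\|\cdot\|$ and using Proposition~\ref{prop1.01.12}: the norm of the left-hand side equals the norm of its absolute value, which is at most $\|\,|x-x'|+|y-y'|\,\|$ since $\|\cdot\|$ is a lattice norm and the right-hand side is positive; then the triangle inequality and $\|\,|z|\,\|=\|z\|$ give
$$
\|(x\supr y)-(x'\supr y')\|\le\|x-x'\|+\|y-y'\|,
$$
and the same for $\infi$ via the second inequality in Proposition~\ref{prorietes_lattices}(l). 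This is Lipschitz, hence uniform, continuity of $(x,y)\mapsto x\supr y$ and $(x,y)\mapsto x\infi y$ on $V\times V$. As a byproduct, the unary maps $x\mapsto x^{+}=x\supr0$, $x\mapsto x^{-}=0\supr(-x)$ and $x\mapsto|x|$ are continuous.

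\emph{Part (b).} I would note that $V^{+}=\{x\in V:x^{-}=0\}$, because $x\ge0$ is equivalent to $0\supr(-x)=0$. Since $x\mapsto x^{-}$ is continuous by~(a) and $\{0\}$ is closed in the normed space $V$, the cone $V^{+}$ is the preimage of a closed set under a continuous map, hence closed. (Equivalently, if $x_n\in V^{+}$ and $x_n\to x$ then $x_n^{-}=0\to x^{-}$, so $x^{-}=0$.)

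\emph{Part (c).} Let $\{x_n\}$ be increasing with $x_n\to x$. First I would check that $x$ is an upper bound: for fixed $m$ and all $n\ge m$ one has $x_n-x_m\in V^{+}$, and letting $n\to\infty$ while using that $V^{+}$ is closed (by~(b)) gives $x-x_m\in V^{+}$, i.e. $x_m\le x$. Next, if $y$ is any upper bound, then $y-x_n\in V^{+}$ for all $n$, and again closedness of $V^{+}$ yields $y-x\in V^{+}$, i.e. $x\le y$. Hence $x=\sup_n x_n$. There is essentially no real obstacle here; the only care needed is the ordering of the arguments just mentioned, and it is worth observing that completeness of $V$ is never used — the convergence of the sequence in~(c) is part of the hypothesis.
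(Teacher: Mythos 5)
Your proposal is correct and follows essentially the same route as the paper: part (a) via Proposition~\ref{prorietes_lattices}(l) combined with Proposition~\ref{prop1.01.12}, part (b) as a formal consequence of the continuity of a lattice operation (the paper uses $x=\lim(x_n\supr 0)=x\supr 0$ where you use continuity of $x\mapsto x^-$ and the preimage of $\{0\}$, which is the same idea), and part (c) by the same two applications of the closedness of $V^+$. Your remark that completeness is not needed is accurate and consistent with the statement being about normed, not Banach, lattices.
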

\begin{proof}
 We only prove part (a) for $\supr$, the proof being similar for $\infi$. Assume that $\{x_n\}_{n \in \N}$ and $\{y_n\}_{n \in \N}$ are two sequences in $V$ converging to $x$ and $y$, respectively. By part (l) of Proposition~\ref{prorietes_lattices}, we have
$$
|(x_n \supr y_n) - (x \supr y)| \leq |x_n - x|+|y_n - y|
$$
and Proposition~\ref{prop1.01.12} applied to this inequality leads to
$$
\|(x_n \supr y_n) - (x \supr y)\| \leq \|x_n - x\|+\|y_n - y\|.
$$
This proves the uniform continuity of the supremum. Part (b) is proved by means of part (a). In fact, if $\{x_n\}_{n \in \N}$ is a sequence in $V^+$ that converges to some $x$ in $V$, then
$$
x = \lim_{n \to \infty} x_n = \lim_{n \to \infty} (x_n \supr 0) = x\supr 0
$$
because the supremum is continuous. This equality means that $x$ belongs to $V^+$.

Finally, assume that $\{x_n\}_{n \in \N}$ is an increasing and converging sequence in $V$ and that $x$ is its limit. For every integers $m$ and $n$ with $m \geq n$ the inequality $x_m - x_n \geq 0$ holds. Letting $m$ tend to infinity, we obtain $x - x_n \geq 0$ for every $n$ and therefore $x$ is an upper bound for $\{x_n\}_{n \in \N}$. If $y$ is an other upper bound for the $x_n$'s, then $u-x$ belongs to $V^+$ because every $u-x_n$ is in $V^+$ and this subset is closed by part (b). Hence $x \leq u$ and $x$ is indeed the supremum of the $x_n$'s.
\end{proof}
%
%
\section{Maps between vector lattices}

In this section two results about maps between vector lattices will be presented. The first one shows how to define linear maps between vector lattices. Lattice homomorphisms will be defined below. The second result gives an alternative definition of these homomorphisms.

\begin{defn}
Let $T : V \rightarrow W$ be a linear map between vector lattices. The map $T$ is \emph{positive}\index{positive map} if it maps the positive cone $V^+$ into the positive cone $W^+$. We say that $T$ is a \emph{lattice homomorphism}\index{lattice homomorphism} if it preserves the lattice structure. In other words, $T$ is a lattice homomorphism if $T(x\supr y) = Tx \supr Ty$ for every $x$ and $y$ in $V$.
\end{defn}

Since every element of a vector lattice is the difference of its positive and negative parts, the values of a linear map are entirely determined by its values on positive elements.

\begin{prop}\label{def_on_cones}
 Let $t : V^+ \rightarrow W$ be a map from the positive cone of a vector lattice $V$ into a vector lattice $W$. If $t$ is additive and homogeneous over $\R_+$, then there is a unique linear extension $T : V \rightarrow W$ of $t$. Moreover, if $t$ is positive, then so is $T$. 
\end{prop}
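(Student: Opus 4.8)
The plan is to define $T$ by the only formula a linear extension could possibly satisfy, and then check it works. Concretely, for $x$ in $V$ I would set $T(x) := t(x^+) - t(x^-)$, using the decomposition $x = x^+ - x^-$ from part (e) of Proposition~\ref{prorietes_lattices}, both of whose summands lie in $V^+$. Uniqueness is then immediate: if $S : V \to W$ is linear and extends $t$, then $S(x) = S(x^+) - S(x^-) = t(x^+) - t(x^-) = T(x)$. Likewise $T$ really extends $t$: for $x$ in $V^+$ one has $x^+ = x$ and $x^- = 0$, and $t(0) = 0$ (either from additivity, $t(0) = t(0) + t(0)$, or from homogeneity), so $T(x) = t(x)$.

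The heart of the argument is linearity of $T$, and the one step that genuinely needs an argument is a representation-independence lemma: \emph{if $x = u - v$ with $u, v$ in $V^+$ — not necessarily $u = x^+$, $v = x^-$ — then $T(x) = t(u) - t(v)$.} To prove it, note that $u - v = x^+ - x^-$ rearranges to $u + x^- = v + x^+$, an equality between elements of $V^+$ (a sum of positive elements is positive by the translation axiom). Applying additivity of $t$ gives $t(u) + t(x^-) = t(v) + t(x^+)$, hence $t(u) - t(v) = t(x^+) - t(x^-) = T(x)$.

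With this lemma, linearity falls out by bookkeeping. For $x, y$ in $V$, the element $x + y$ is written as the difference of positive elements $(x^+ + y^+) - (x^- + y^-)$, so the lemma and additivity of $t$ give $T(x+y) = t(x^+ + y^+) - t(x^- + y^-) = T(x) + T(y)$. For $\lambda$ in $\R_+$, $\lambda x = (\lambda x^+) - (\lambda x^-)$ and $\R_+$-homogeneity of $t$ give $T(\lambda x) = \lambda T(x)$. Finally $-x = x^- - x^+$ is again a difference of positive elements, so the lemma yields $T(-x) = t(x^-) - t(x^+) = -T(x)$; combining this with the previous case extends homogeneity to all real scalars, and $T$ is linear.

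The positivity statement is then essentially free: if $t$ is positive and $x$ lies in $V^+$, then $T(x) = t(x)$ lies in $W^+$. I expect the representation-independence lemma to be the only real obstacle; once it is in hand, every other verification is a routine manipulation of the decomposition $x = x^+ - x^-$.
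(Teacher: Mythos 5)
Your proof is correct and follows essentially the same route as the paper: the paper defines $T(x) = t(u) - t(v)$ for an arbitrary decomposition $x = u - v$ with $u, v \in V^+$ and simply asserts well-definedness and linearity, while your representation-independence lemma (via $u + x^- = v + x^+$ and additivity of $t$) is exactly the verification the paper leaves implicit. Nothing is missing; if anything, your write-up supplies the one detail the paper glosses over.
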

\begin{proof}
 Let $t$ be as in the statement of the proposition and recall that by part (e) of Proposition~\ref{prorietes_lattices}, we have $V = V^+-V^+$. Define $T : V \rightarrow W$ by declaring that if $x = u-v$ with $u$ and $v$ in $V^+$, then $T(x)=t(u)-t(v)$. Because of the linearity and the positive homogeneity of $t$, the map $T$ is well defined and is linear. Since every linear extension of $t$ has to verify the defining equation of $T$, this extension is unique.

Because $T$ is an extension of $t$, it coincides with $t$ on the positive cone of $V$. Therefore if $t$ is positive, then so is $T$.
\end{proof}

Lattice homomorphisms have been defined by their compatibility the supremum operation. It is obvious that we could equivalently have defined them with the infimum. The following proposition says that we could also have defined them with the positive part operation.

\begin{prop}\label{homo_positive_part}
 A linear map $T : V \rightarrow W$ between vector lattices is a lattice homomorphism if and only if $T(x^+)=(Tx)^+$ for every $x$ in $V$.
\end{prop}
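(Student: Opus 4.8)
The plan is to prove the two implications separately. The forward implication is essentially immediate from linearity, while the converse rests on the single algebraic identity $x \supr y = (x - y)^+ + y$, valid in any vector lattice, which lets one rewrite a supremum purely in terms of the positive-part operation.

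First I would treat the forward direction. Suppose $T$ is a lattice homomorphism. Since $T$ is linear it sends $0$ to $0$, so for every $x$ in $V$ we obtain $T(x^+) = T(x \supr 0) = Tx \supr T0 = Tx \supr 0 = (Tx)^+$, which is exactly the claimed property.

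For the converse, the first step is to record the identity $x \supr y = (x - y)^+ + y$: indeed $(x - y)^+ + y = \bigl((x - y) \supr 0\bigr) + y = x \supr y$ by part (c) of Proposition~\ref{prorietes_lattices} (translation invariance of the supremum). Now assume $T(z^+) = (Tz)^+$ for all $z$ in $V$. Applying $T$ to this identity with $z = x - y$ and using linearity of $T$ gives $T(x \supr y) = T\bigl((x-y)^+\bigr) + Ty = \bigl(T(x-y)\bigr)^+ + Ty = (Tx - Ty)^+ + Ty$. Applying the same identity inside $W$ to the elements $Tx$ and $Ty$ yields $Tx \supr Ty = (Tx - Ty)^+ + Ty$. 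The two right-hand sides coincide, so $T(x \supr y) = Tx \supr Ty$, i.e. $T$ is a lattice homomorphism.

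There is no real obstacle in this argument; the only point requiring a little care is the justification of the identity $x \supr y = (x-y)^+ + y$, which is where Proposition~\ref{prorietes_lattices}(c) is needed, and the fact that a linear map is automatically additive and homogeneous so that the reduction of $\supr$ to $(\,\cdot\,)^+$ transfers cleanly through $T$.
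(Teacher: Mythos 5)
Your argument is correct and is essentially the paper's own proof: the forward direction from linearity, and the converse via the identity $x \supr y = (x-y)^+ + y$ obtained from Proposition~\ref{prorietes_lattices}(c), pushed through $T$ by linearity. You merely spell out the intermediate computations that the paper leaves implicit.
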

\begin{proof}
 Let $T : V \rightarrow W$ be a linear map between vector lattices. If $T$ is a lattice homomorphism, the definitions immediately imply that $T(x^+)=(Tx)^+$ for every $x$ in $V$. Conversely, assume that $T(x^+)=(Tx)^+$ for every $x$ in $V$. An application of part (c) of Proposition~\ref{prorietes_lattices} shows that $x \supr y = y + (x-y)^+$ for every $x$ and $y$ in $V$. This identity, the linearity of $T$ and our assumption that $T$ commutes with the positive part operation imply that $T(x \supr y) = Tx \supr Ty$ for every $x$ and $y$ in $V$.
\end{proof}
%
%
\section{Dual spaces}

If $V$ is a vector lattice, then its algebraic dual $V'$ carries a natural order structure.

\begin{defn}
 Let $V$ be a vector lattice. The \emph{canonical order}\index{canonical order} on its algebraic dual $V'$ is defined by declaring that $f \leq g$ if and only if $f(x)\leq g(x)$ for every $x$ in $V^+$.
\end{defn}

We wish to show that if $V$ is a Banach lattice, then the canonical order on $V'$ induces a vector lattice structure on its topological dual $V^*$. Because it is not easy to prove that the lattice operations preserve the topological dual, an auxiliary subspace $V^\#$ of $V'$ will first be defined. The space $V^\#$ will then be shown to be a vector lattice and finally it is shown that $V^*$ and $V^\#$ coincide.

\begin{defn}
 Let $V$ be a vector lattice. A linear functional $f : V \rightarrow \R$ is \emph{order bounded}\index{order bounded} if $f([x, y])$ is a bounded subset of $\R$ for every $x$ and $y$ in $V$. We define $V^\#$\index{sharp@$^\#$} to be the subspace of the algebraic dual of $V$ containing all of the order bounded functionals. In other words, $V^\# :=\{ f \in V' : f \text{ is order bounded}\}$.
\end{defn}

\begin{prop}\label{V_sharp_vector_lattice}
 Let $V$ be a vector lattice and $V'$ its algebraic dual. Endowed with the canonical order of $V'$, the subspace $V^\#$ is a vector lattice and its lattice operations are given on $V^+$ by
\begin{eqnarray}
(f\supr g )(x) &=& \sup \{f(y) + g(x-y) : y \in [0, x]\} \text{ and } \label{eq1:02.12}\\
(f\infi g )(x) &=& \inf \{f(y) + g(x-y) : y \in [0, x]\}.\label{eq2:02.12}
\end{eqnarray}
\end{prop}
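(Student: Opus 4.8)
The plan is to carry out the classical Riesz--Kantorovich construction. First I would note that $V^\#$ is a linear subspace of $V'$: if $f,g$ are order bounded and $\lambda\in\R$, then $(f+\lambda g)([x,y])\subseteq f([x,y])+\lambda g([x,y])$ is bounded. It is also convenient to record the reduction that a functional $f$ is order bounded as soon as $f([0,x])$ is bounded for every $x\in V^+$: by parts (b) and (c) of Proposition~\ref{prorietes_lattices} one has $[x,y]=x+[0,y-x]$ whenever $x\leq y$, and $f$ is linear, so $f([x,y])=f(x)+f([0,y-x])$.

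Next, given $f,g\in V^\#$, I would define $h:V^+\to\R$ by the right-hand side of~\eqref{eq1:02.12}, namely $h(x)=\sup\{f(y)+g(x-y):y\in[0,x]\}$, and verify the hypotheses of Proposition~\ref{def_on_cones}. The supremum is finite because for $y\in[0,x]$ both $y$ and $x-y$ lie in $[0,x]$, where $f$ and $g$ are bounded. Positive homogeneity follows from part (b) of Proposition~\ref{prorietes_lattices} together with the identity $[0,\lambda x]=\lambda[0,x]$ for $\lambda>0$ (the case $\lambda=0$ being trivial). The only substantial point is additivity $h(x_1+x_2)=h(x_1)+h(x_2)$, and this is exactly where the decomposition property (Proposition~\ref{decomposition_property}) is used: every $y\in[0,x_1+x_2]$ splits as $y=y_1+y_2$ with $y_i\in[0,x_i]$, and conversely, and under this correspondence $f(y)+g(x_1+x_2-y)=\bigl[f(y_1)+g(x_1-y_1)\bigr]+\bigl[f(y_2)+g(x_2-y_2)\bigr]$; taking suprema on both sides gives the equality. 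By Proposition~\ref{def_on_cones}, $h$ extends uniquely to a linear functional on $V$, which I denote $f\supr g$, and which by the same proposition is positive when $f,g$ are.

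It then remains to check that $f\supr g\in V^\#$ and that it is genuinely the least upper bound of $\{f,g\}$ for the canonical order. Order boundedness follows from the reduction above: for $x\in[0,x_0]$ one has $y,x-y\in[0,x_0]$ for all $y\in[0,x]$, so $|h(x)|\leq\sup|f([0,x_0])|+\sup|g([0,x_0])|$. That $f\supr g$ is an upper bound is seen by evaluating at $x\in V^+$ and taking $y=x$, resp.\ $y=0$, in the defining supremum: $(f\supr g)(x)\geq f(x)$ and $(f\supr g)(x)\geq g(x)$. If $k\in V^\#$ satisfies $f\leq k$ and $g\leq k$, then for $x\in V^+$ and any $y\in[0,x]$ both $y$ and $x-y$ are positive, whence $f(y)+g(x-y)\leq k(y)+k(x-y)=k(x)$; taking the supremum over $y$ gives $f\supr g\leq k$. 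Since the canonical order is defined pointwise on $V^+$ it is clearly translation invariant and preserved by positive scalars, and infima are supplied by $f\infi g:=-\bigl((-f)\supr(-g)\bigr)$, formula~\eqref{eq2:02.12} resulting from substituting $-f,-g$ into~\eqref{eq1:02.12} and using $\inf(-A)=-\sup A$. Hence $V^\#$ is a vector lattice with the stated lattice operations.

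The main obstacle is the additivity of $h$, which must be extracted carefully from the decomposition property; it is also worth being attentive that the supremum defining $f\supr g$ ranges over $[0,x]$ (so that order boundedness of $f$ and $g$ can be invoked for finiteness and for membership in $V^\#$) rather than over a larger set. Everything else is routine bookkeeping with the canonical order and with Propositions~\ref{prorietes_lattices} and~\ref{def_on_cones}.
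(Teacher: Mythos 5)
Your proof is correct and takes essentially the same route as the paper's: the Riesz--Kantorovich construction, defining $h$ on $V^+$ by the right-hand side of~\eqref{eq1:02.12}, using the decomposition property (Proposition~\ref{decomposition_property}) for additivity, extending by Proposition~\ref{def_on_cones}, and checking order boundedness via boundedness on intervals $[0,x_0]$. You additionally spell out the verification that $f\supr g$ is the \emph{least} upper bound and the derivation of~\eqref{eq2:02.12} from~\eqref{eq1:02.12} by negation, details the paper leaves implicit.
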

\begin{proof}
 Only equation~\eqref{eq1:02.12} will be proved. To that end, we will define a map $h$ on $V^+$ that verifies all the properties that $f \supr g$ should verify. Then, we show that $h$ can be extended to $V$ and that it belongs to $V^ \#$. Define $h$ by declaring that
$$
h(x) := \sup \{f(y) + g(x-y) : 0 \leq y \leq x \} \quad (x \in V^+).
$$
If $k$ is an upper bound of $\{f, g\}$ in $V^\#$, then for every $x$ in $V^+$ we have $h(x)\leq k(x)$ and $f(x), g(x) \leq h(x)$ for every $x$ in $V^+$. Thus, if we show that $h$ induces a linear order bounded functional, then we are done.

First, we show that $h$ can be extended into a linear functional. According to Proposition~\ref{def_on_cones}, it suffices to show that $h$ is additive and positively homogeneous. The latter property is no trouble and we immediately tackle the additivity issue. Let $x_1$ and $x_2$ belong to $V^+$ and set $x:= x_1 + x_2$. By the Decomposition property~\ref{decomposition_property}, we have $[0, x] = [0, x_1]+[0, x_2]$ and so
$$
h(x)=\sup\{f(y_1+y_2) + g(x-(y_1 + y_2)) : y_1 \in [0, x_1], y_2 \in [0, x_2]\}.
$$
By writing $x$ as $x_1 + x_2$ in the above equation, using the linearity of $f$ and $g$ and splitting the supremum in two, we obtain $h(x)=h(x_1) + h(x_2)$.

We end this proof by showing that the unique linear extension of $h$ is order bounded. By translating intervals, it is enough to show that $h([0, x])$ is bounded for every $x$ in $V^+$. Notice that for every $z$ in $[0, x]$ we have
$$
f(z) \leq h(z) \leq \sup_{0 \leq y \leq x} f(y) + \sup_{0 \leq y \leq x} g(y).
$$
The functionals $f$ and $g$ being order bounded, this inequality implies that $h([0, x])$ is bounded.
\end{proof}

\begin{cor}\label{absolute_value_dual}
 The positive part and the absolute value of an order bounded functional $f$ are given on $V^+$ by the formulas
\begin{eqnarray*}
f^+(x) &=& \sup \{f(v) : v \in [0, x]\} \text{ and }\\
|f|(x)&=& \sup\{f(v) : |v| \leq x\}.
\end{eqnarray*}
\end{cor}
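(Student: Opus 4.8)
The plan is to deduce both formulas directly from Proposition~\ref{V_sharp_vector_lattice}, which already describes the lattice operations of $V^\#$ on the positive cone. For the positive part, recall that $f^+ = f \supr 0$, where $0$ denotes the zero functional, which of course lies in $V^\#$. Applying equation~\eqref{eq1:02.12} with $g = 0$ and observing that the term $g(x-y)$ vanishes identically immediately yields $f^+(x) = \sup\{f(y) : y \in [0, x]\}$ for every $x$ in $V^+$, which is the first asserted formula (and in particular shows that $f^+ \in V^\#$).

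For the absolute value, I would start from the identity $|f| = f \supr (-f)$ (part (k) of Proposition~\ref{prorietes_lattices}), which is legitimate since $V^\#$ is a linear subspace of $V'$ and hence contains $-f$. Applying equation~\eqref{eq1:02.12} with $g = -f$ and using the linearity of $f$ gives, for every $x$ in $V^+$,
\[
|f|(x) = \sup\{f(y) - f(x - y) : y \in [0, x]\} = \sup\{f(2y - x) : y \in [0, x]\}.
\]
It then remains to identify the index set $\{2y - x : y \in [0, x]\}$ with $\{v \in V : |v| \leq x\}$. Using part (k) of Proposition~\ref{prorietes_lattices} once more, the condition $|v| \leq x$ is equivalent to $v \leq x$ and $-v \leq x$, i.e.\ to $-x \leq v \leq x$. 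If $y \in [0, x]$ then $0 \leq 2y \leq 2x$, so $v := 2y - x$ satisfies $-x \leq v \leq x$; conversely, if $-x \leq v \leq x$, then $y := \tfrac{1}{2}(v + x)$ lies in $[0, x]$ and $2y - x = v$. Hence the two sets coincide and the second formula follows.

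The only genuine content here is this change of variables $v = 2y - x$ together with the translation of "$|v| \leq x$" into a two-sided order bound via Proposition~\ref{prorietes_lattices}(k); everything else is a direct substitution into Proposition~\ref{V_sharp_vector_lattice}, so I do not expect any real obstacle beyond bookkeeping with the lattice identities. As an alternative one could bypass the substitution by writing $|f| = f^+ + f^-$, computing $f^-(x) = \sup\{-f(y) : y \in [0,x]\}$ from equation~\eqref{eq1:02.12} applied to $0 \supr (-f)$, and then using that every $v$ with $|v| \leq x$ decomposes as $v = v^+ - v^-$ with $v^+, v^- \in [0, x]$; I expect this route to be of comparable length.
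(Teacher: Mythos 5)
Your argument is correct and follows exactly the same route as the paper: set $g=0$ in equation~\eqref{eq1:02.12} for the positive part, then use $|f| = f \supr (-f)$ together with the change of variables $v = 2y - x$ identifying $\{2y - x : y \in [0,x]\}$ with $\{v : |v| \leq x\}$. You merely spell out the set identification in more detail than the paper does, which is harmless.
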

\begin{proof}
The formula for $f^+$ is obtained by letting $g=0$ in the formula for the supremum in Proposition~\ref{V_sharp_vector_lattice}. By part (k) of Proposition~\ref{prorietes_lattices}, we have $|f| = f \supr -f$. The formula for the supremum leads to
$$
|f|(x)=\{f(2y -x) : y \in [0, x]\}.
$$
The observation that $\{2y - x : y \in [0, x]\} = \{v : |v| \leq x\}$ linked to the equation above ends the proof.
\end{proof}

We are one step away from establishing that in the complete case, $V^*$ is nothing else than $V^\#$.

\begin{prop}\label{positive_continuous}
Let $V$ and $W$ be normed lattices. If $V$ is complete, then every positive linear map $T : V \rightarrow W$ is continuous. 
\end{prop}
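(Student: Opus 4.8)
The plan is to prove this by contradiction, via the closed graph theorem, so completeness of $V$ is essential. Suppose $T : V \to W$ is positive but not continuous. Then $T$ is unbounded on the unit ball, so there is a sequence $\{x_n\}_{n \in \N}$ in $V$ with $\|x_n\| \leq 1$ (or better, $\|x_n\|$ small) while $\|T x_n\| \to \infty$. The key trick is to replace $x_n$ by something controlling its absolute value and then sum a rapidly-decaying series to get a single element dominating all the $|x_n|$ simultaneously.

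Concretely, I would first pass to $|x_n|$: since $T$ need not be a lattice homomorphism we cannot say $|Tx_n| = T|x_n|$, but positivity gives the estimate I actually need. From $-|x_n| \leq x_n \leq |x_n|$ and positivity of $T$ we get $-T|x_n| \leq T x_n \leq T|x_n|$, hence (using part (k) of Proposition~\ref{prorietes_lattices} and the lattice norm property via Proposition~\ref{prop1.01.12}) $\|T x_n\| \leq \|T|x_n|\,\|$. So WLOG $x_n \geq 0$. Now rescale: choose, for each $n$, an element $x_n \geq 0$ with $\|x_n\| \leq 2^{-n}$ but $\|T x_n\| \geq n$. Since $V$ is complete and $\sum \|x_n\| < \infty$, the series $y := \sum_{n} x_n$ converges in $V$; moreover each partial sum $\sum_{k \leq N} x_k$ is $\geq x_n$ for every $n \leq N$, and since the positive cone $V^+$ is closed (Proposition~\ref{properties_normes_lattices}(b)) and $y - \sum_{k \leq N} x_k \in V^+$, passing to the limit gives $y \geq x_n$ for every $n$. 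Actually, more directly: $y - x_n = \sum_{k \neq n} x_k$ is a limit of sums of positive elements, hence in $V^+$, so $0 \leq x_n \leq y$.

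Then positivity of $T$ forces $0 \leq T x_n \leq T y$ in $W$, and since $W$ is a normed lattice, $\|T x_n\| \leq \|T y\|$ for every $n$ by Proposition~\ref{prop1.01.12}. This contradicts $\|T x_n\| \geq n \to \infty$. Hence $T$ is bounded, i.e.\ continuous.

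The main obstacle — and the reason I set it up this way rather than via the closed graph theorem directly — is that $T$ is only assumed linear and positive, not a lattice homomorphism, so one must be careful never to commute $T$ past $|\cdot|$ or $\sup$; every time absolute values or lattice operations are used they must be used on the $V$ side (where we can sandwich $x_n$ between $\pm|x_n|$ or between $0$ and $y$) and then pushed through $T$ by positivity alone, landing an order inequality in $W$ which the lattice norm converts to a norm inequality. The completeness of $V$ enters only once, to form the dominating element $y$; completeness of $W$ is not needed.
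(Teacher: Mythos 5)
Your proof is correct and follows essentially the same route as the paper's: reduce to positive vectors in the unit ball, use completeness of $V$ to sum a rapidly decaying series into a single dominating element $y$, and derive a contradiction from $0 \leq Tx_n \leq Ty$ together with monotonicity of the lattice norm. The only cosmetic difference is the normalization ($\|x_n\|\leq 2^{-n}$ versus the paper's $\sum x_i/i^2$ with $\|Tx_i\|\geq i^3$) and your slightly more explicit justification of the reduction to $V^+$ via the sandwich $-|x|\leq x\leq |x|$.
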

\begin{proof}
 Let $V$ and $W$ be normed lattices and assume that $V$ is complete. Let $B$ denote the closed unit ball in $V$ and observe that if $B^+ := B \cap V^+$, then $B$ is contained in $B^+ - B^+$. This is true because the norms of the positive and negative parts of any vector are always less or equal to the norm of its absolute value.

Let $T : V \rightarrow W$ be a positive linear map and suppose for the sake of contradiction that $T$ is not continuous. In particular, $T$ is not bounded on $B$ and therefore neither is it on $B^+$. Thus our assumption that $T$ is not continuous produces a sequence $\{x_n\}_{n \in \N}$ in $B^+$ such that $\|Tx_n\| \geq n^3$ for every integer $n$. Because $V$ is complete, a vector $z$ can be defined by $z:=\sum_{i = 1}^\infty x_i/i^2$. The closedness of positive cones in normed lattices implies that $z$ belongs to $V^+$. Because $z \geq x_n/n^2 \geq 0$ for every $n$ and the norm being monotonically increasing on the positive cone, we obtain the contradiction that $\|Tz\| \geq n$ for every $n$.
\end{proof}

\begin{thm}\label{V_sharp_V_star}
 If $V$ is a Banach lattice, then $V^*=V^\#$ and $V^*$ is a Banach lattice whose lattice operations are given on $V^+$ by~\eqref{eq1:02.12} and~\eqref{eq2:02.12}. 
\end{thm}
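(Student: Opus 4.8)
The plan is to split the statement into three tasks: (1) establish the set equality $V^* = V^\#$ inside the algebraic dual $V'$; (2) deduce that $V^*$ is a vector lattice whose operations on $V^+$ are given by~\eqref{eq1:02.12} and~\eqref{eq2:02.12}; and (3) verify that the operator norm on $V^*$ is a lattice norm, after which completeness of $V^*$ (automatic, since it is the dual of a normed space) promotes it to a Banach lattice.

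For task (1), the inclusion $V^* \subseteq V^\#$ needs no completeness: if $x \leq z \leq y$ then $0 \leq z - x \leq y - x$, so Proposition~\ref{prop1.01.12} gives $\|z\| \leq \|x\| + \|y - x\|$, meaning order intervals are norm-bounded, and a bounded functional sends a bounded set to a bounded subset of $\R$. The reverse inclusion $V^\# \subseteq V^*$ is where completeness becomes essential. Given $f \in V^\#$, Proposition~\ref{V_sharp_vector_lattice} tells us that $|f|$ again lies in $V^\#$ and is a \emph{positive} functional, so by Proposition~\ref{positive_continuous} (which uses that $V$ is complete) $|f|$ is continuous. Evaluating the formula for $|f|$ from Corollary~\ref{absolute_value_dual} at $x = |v|$ yields $|f(v)| \leq |f|(|v|)$ for every $v \in V$ (both $f(v)$ and $f(-v)$ are $\leq |f|(|v|)$), and since $\||v|\| = \|v\|$ the boundedness of $|f|$ forces that of $f$. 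With $V^* = V^\#$ in hand, task (2) is immediate from Proposition~\ref{V_sharp_vector_lattice}.

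For task (3) the crucial identity is $\|f\| = \||f|\|$. Positivity of $|f|$ lets me compute $\||f|\|$ as the supremum of $|f|(v)$ over $v$ in the positive part of the closed unit ball (split a general $u$ as $u^+ - u^-$ and use $\|u^\pm\| \leq \|u\|$). The inequality $|f(v)| \leq |f|(|v|)$ then gives $\|f\| \leq \||f|\|$, while the formula $|f|(v) = \sup\{f(w) : |w| \leq v\}$ combined with the lattice norm axiom (if $|w| \leq v$ and $\|v\| \leq 1$ then $\|w\| \leq 1$) gives $|f|(v) \leq \|f\|$ for such $v$, hence $\||f|\| \leq \|f\|$. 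Consequently, $|f| \leq |g|$ implies $|f|(v) \leq |g|(v)$ for all $v \in V^+$, so $\|f\| = \||f|\| \leq \||g|\| = \|g\|$, which is precisely the lattice norm condition. (Equivalently, one may invoke the converse half of Proposition~\ref{prop1.01.12}: it suffices to check $\|f\| = \||f|\|$ and that $0 \leq f \leq g$ implies $\|f\| \leq \|g\|$, the latter being clear from positivity.)

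I expect the only genuine obstacle to be the inclusion $V^\# \subseteq V^*$, that is, upgrading order boundedness to honest continuity; this is exactly the point where completeness of $V$ cannot be dispensed with, funnelled through the automatic continuity of positive operators. The lattice-norm verification, though it leans on the explicit description of $|f|$ from Corollary~\ref{absolute_value_dual}, is then routine bookkeeping.
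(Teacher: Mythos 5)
Your proposal is correct and follows essentially the same route as the paper: both reduce the hard inclusion $V^\# \subseteq V^*$ to the automatic continuity of positive maps (Proposition~\ref{positive_continuous}), and both verify the lattice-norm condition via the domination $|f(x)| \leq |f|(|x|)$ together with the description of $|f|$ on $V^+$. The only cosmetic difference is that the paper writes $f = f^+ - f^-$ and concludes continuity of $f$ as a difference of two continuous positive functionals, whereas you pass through the single positive functional $|f|$; both hinge on exactly the same lemma.
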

\begin{proof}
 Let $f$ belong to $V^*$. Using part (k) of Proposition~\ref{prorietes_lattices}, it is easy to see that for every $x, y$ and $z$ in $V$ such that $z$ belongs to $[x,y]$, the inequality $0 \leq |z| \leq |x|+|y|$ holds. This implies that $|fz| \leq \|f\| (\|x\|+\|y\|)$ for every $z$ in $[x, y]$ and so $f$ is order bounded.

Conversely, assume that $f$ belongs to $V^\#$. Since $V^\#$ is a vector lattice, we have the decomposition $f= f^+ - f^-$. The definitions are such that every functional belonging to the positive cone of $V'$ is positive. But then Proposition~\ref{positive_continuous} asserts that $f$ is the difference of two continuous maps. Thus $f$ is continuous.

We have just seen that $V^\# = V^*$, which shows that $V^*$ is a vector lattice for the lattice operations mentioned in the statement. It must still be proved that it is a Banach lattice. It can be achieved by means of Proposition~\ref{prop1.01.12}. Before rushing into the proof, write a functional $f$ and a vector $x$ as $f^+ - f^-$ and $x^+ - x^-$  and use the triangle inequality to obtain the estimate
\begin{equation}\label{eq1:03.12}
|f(x)| \leq |f^+(x^+)|+ |f^+(x^-)|+|f^-(x^+)|+|f^-(x^-)| = |f|(|x|).
\end{equation}
Now, let us verify that $\| \,|f|\,\| = \|f\|$ for every $f$ in $V^*$. By the inequality above, we have
\begin{eqnarray*}
\sup \{|f(x)|: \|x\| \leq 1\} 	& \leq &	\sup \{|f|(|x|) : \|x\| \leq 1\}  \\
				& = & 		\sup \{|f|(x) : x \geq 0 \text{ and } \|x\| \leq 1\}\\
				& = &		\sup \{|\,|f|(x)\,| : x \geq 0 \text{ and } \|x\| \leq 1\} \leq \| \,|f|\, \|.
\end{eqnarray*}
This shows that $\|f\| \leq \| \,|f|\, \|$. Conversely, using~\eqref{eq1:03.12} on $|f|$, we have
\begin{eqnarray*}
\sup \{|\,|f|(x)|: \|x\| \leq 1\}		& \leq &	\sup \{|f|(|x|) : \|x\| \leq 1\}  \\
				  & = & 		\sup \{|f|(x) : x \geq 0 \text{ and } \|x\| \leq 1\}\\
				  & = &		\sup \{ \sup\{f(v) : |v| \leq x\} :  x \geq 0 \text{ and } \|x\| \leq 1 \}\\
				  &\leq&	\sup \{\|f\| :  x \geq 0 \text{ and } \|x\| \leq 1\}.
\end{eqnarray*}
The last term being equal to $\|f\|$, we have $\| \,|f|\,\| \leq \|f\|$.

It remains to be shown that $\|f\| \leq \|g\|$ whenever $0 \leq f \leq g$. Again, use equation~\eqref{eq1:03.12} to obtain
\begin{eqnarray*}
 \sup \{|f(x)|: \|x\| \leq 1\} 	& \leq &  \sup \{|f|(|x|) : \|x\| \leq 1\}  \\
				& \leq & \sup \{|g|(|x|) : \|x\| \leq 1\} \\
			      &=& \sup \{|g|(x) : x \geq 0 \text{ and } \|x\| \leq 1\}.\\
\end{eqnarray*}
Since the last term is less or equal to $\|g\|$, we are done.
\end{proof}

We finally tackle the double dual topic. We will establish two results. The first one says that the canonical embedding of a Banach lattice into its double dual is a positive map and the second one is the result announced in the opening of the chapter : the canonical embedding is a lattice homomorphism.

\begin{thm}\label{embedding_positive}
 Let $V$ be a Banach lattice. If $J$ denotes the canonical embedding of $V$ into its double dual $V^{**}$, then for every $x$ in $V$ we have $x \geq 0$ if and only if $Jx \geq 0$. In particular, $J$ is positive.
\end{thm}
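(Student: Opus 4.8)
The plan is to prove the two implications separately: the direction $x\geq 0\Rightarrow Jx\geq 0$ is essentially a reformulation of the definition of the canonical order (and it immediately gives the final assertion that $J$ is positive), while the converse is where the work lies and will be handled by a Hahn--Banach separation argument.

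I would first dispatch the easy direction. By Theorem~\ref{V_sharp_V_star} applied to the Banach lattice $V^*$, the double dual $V^{**}=(V^*)^*=(V^*)^\#$ is a Banach lattice carrying the canonical order of $(V^*)'$, and a functional $g$ lies in the positive cone $(V^*)^+$ precisely when $g(v)\geq 0$ for every $v\in V^+$. Hence, if $x\geq 0$, then for every $g\in (V^*)^+$ we have $(Jx)(g)=g(x)\geq 0$, which is exactly the statement that $Jx\geq 0$ in the canonical order of $V^{**}$. In particular $J$ maps $V^+$ into $(V^{**})^+$, so $J$ is positive, which settles the ``in particular'' clause as well.

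For the converse I would argue by contraposition. Suppose $x\not\geq 0$, i.e. $x\notin V^+$. The positive cone $V^+$ is norm-closed by Proposition~\ref{properties_normes_lattices}(b) and is obviously convex, so the Hahn--Banach separation theorem provides a continuous linear functional $\phi\in V^*$ and a real number $\gamma$ with $\phi(x)<\gamma\leq\phi(y)$ for every $y\in V^+$. Since $0\in V^+$ we get $\gamma\leq 0$, and since $V^+$ is a cone, $\phi$ must in fact be nonnegative on $V^+$ (otherwise $\phi(\lambda y)\to-\infty$ as $\lambda\to\infty$ for a suitable $y\in V^+$, contradicting the lower bound $\gamma$). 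Thus $\phi\in (V^*)^+$ and $\phi(x)<\gamma\leq 0$, so $(Jx)(\phi)=\phi(x)<0$, which shows $Jx\not\geq 0$. Taking the contrapositive yields $Jx\geq 0\Rightarrow x\geq 0$.

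The only genuinely delicate point is the cone-handling inside the separation step, namely verifying that separating the point $x$ from the closed convex cone $V^+$ produces a functional that is actually positive and strictly negative at $x$; but this is a standard argument. Note that none of the lattice operations on $V^*$ are needed beyond the description of its positive cone, so the whole proof rests on just two ingredients: closedness of $V^+$ and the Hahn--Banach separation theorem.
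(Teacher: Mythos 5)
Your proof is correct and follows essentially the same route as the paper: the easy direction is read off from the definition of the canonical order on $V^{**}$, and the converse uses norm-closedness of $V^+$ together with Hahn--Banach separation to produce a positive functional that is negative at $x$. The only difference is that you spell out the cone-handling step that the paper leaves implicit when it asserts the existence of such a functional.
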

\begin{proof}
Assume $x$ belongs to $V^+$ and let $f$ be a bounded linear functional satisfing $f \geq 0$. That $Jx(f) \geq 0$ is obvious, since $Jx(f) = f(x)$ and both $x$ and $f$ are greater or equal to $0$. Thus, $Jx \geq 0$.

Conversely, assume that $Jx \geq 0$ and let us show that $x \geq 0$. Suppose $x$ does not belong to the closed subset $V^+$. Then there would exist an $f$ in $V^*$ such that $f(x)<0$ and $f(y) \geq 0$ for every $y$ in $V^+$. In particular, $f$ is positive and therefore $Jx(f) \geq 0$. This is absurd because $Jx(f) = f(x) <0$ and therefore $x$ has to belong to $V^+$.
\end{proof}

\begin{thm}\label{embedding_homomorphism}
 The canonical embedding of a Banach lattice into its double dual is a lattice homomorphism.
\end{thm}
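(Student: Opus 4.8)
The plan is to use Proposition~\ref{homo_positive_part}, which reduces the claim to showing that the canonical embedding $J : V \to V^{**}$ commutes with the positive part operation, that is, $J(x^+) = (Jx)^+$ for every $x$ in $V$. Fix $x$ in $V$. One inclusion is essentially free: since $J$ is positive (Theorem~\ref{embedding_positive}) and $x^+ \geq x$, $x^+ \geq 0$, we get $J(x^+) \geq Jx$ and $J(x^+) \geq 0$, hence $J(x^+) \geq (Jx)^+$ by the defining property of the supremum in the vector lattice $V^{**}$. So the real content is the reverse inequality $J(x^+) \leq (Jx)^+$, or equivalently $J(x^+) - (Jx)^+ \leq 0$; using Theorem~\ref{embedding_positive} again, this amounts to showing that the functional $(Jx)^+ - J(x^+)$ on $V^*$ is positive, i.e. $(Jx)^+(f) \geq J(x^+)(f) = f(x^+)$ for every $f$ in $(V^*)^+$.

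First I would compute $(Jx)^+(f)$ for a positive $f$ in $V^*$ using the formula of Corollary~\ref{absolute_value_dual}, but applied one level up: since $V^{**}$ is the dual Banach lattice of $V^*$ (Theorem~\ref{V_sharp_V_star} applied to $V^*$), and $f \geq 0$, we have
\[
(Jx)^+(f) = \sup\{ (Jx)(g) : g \in [0, f] \} = \sup\{ g(x) : 0 \leq g \leq f \}.
\]
So the goal becomes: for every $f \geq 0$ in $V^*$, there exist functionals $g$ with $0 \leq g \leq f$ and $g(x)$ as close as desired to $f(x^+)$. The natural candidate is a $g$ that "sees only the positive part of $x$". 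The key step is therefore to produce such a $g$: I would use the Hahn--Banach argument exactly as in the proof of Theorem~\ref{V_sharp_V_star} and Theorem~\ref{embedding_positive}, separating a point from the closed cone, but now inside $V$ and controlled by $f$. Concretely, one can define $g$ on the positive cone by $g(y) := \inf\{ f(y' ) : y' \geq y,\ y' \geq 0 \}$-type sublinear gymnastics, or more cleanly: consider the sublinear functional $p(y) := f(y^+)$ on $V$; check $p$ is sublinear (subadditivity follows from $(y+z)^+ \leq y^+ + z^+$ and positivity of $f$, positive homogeneity is clear); apply Hahn--Banach to the functional defined on $\R x$ by $\lambda x \mapsto \lambda f(x^+)$ — which is dominated by $p$ since for $\lambda \geq 0$, $\lambda f(x^+) = f((\lambda x)^+) = p(\lambda x)$, and for $\lambda < 0$, $\lambda f(x^+) \leq 0 \leq p(\lambda x)$ — to obtain $g \in V'$ with $g(x) = f(x^+)$ and $g \leq p$. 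Then $g(y) \leq f(y^+) \leq f(y)$ for $y \geq 0$ gives $g \leq f$ in the canonical order, $-g(y) \leq f((-y)^+) = 0$ for $y \geq 0$ gives $g \geq 0$, and boundedness of $g$ follows from $g \leq f$ and $g \geq 0$ together with Proposition~\ref{positive_continuous} applied to $g$ and $f - g$, so $g \in V^*$. Plugging this $g$ into the supremum above yields $(Jx)^+(f) \geq g(x) = f(x^+) = J(x^+)(f)$, which is exactly what we needed.

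I expect the main obstacle to be the verification that the Hahn--Banach extension $g$ actually lands in $V^*$ rather than merely $V'$, and that it satisfies $0 \leq g \leq f$ in the canonical order with the right care about which cone the inequalities are tested on; this is where one must be careful to invoke Proposition~\ref{positive_continuous} (completeness of $V$ is used here) to upgrade order-boundedness-relative-to-$f$ into genuine norm-continuity. Everything else — the easy inequality, the double-dual formula for $(Jx)^+$, the sublinearity of $y \mapsto f(y^+)$ — is routine given the machinery already assembled in this chapter, in particular Corollary~\ref{absolute_value_dual} and Theorem~\ref{V_sharp_V_star} applied to the Banach lattice $V^*$.
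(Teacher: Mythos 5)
Your proposal is correct, and its skeleton coincides with the paper's: reduce to $J(x^+)=(Jx)^+$ via Proposition~\ref{homo_positive_part}, get $(Jx)^+\leq Jx^+$ from positivity of $J$, and for the converse fix a positive $f$ in $V^*$, use Corollary~\ref{absolute_value_dual} one level up to write $(Jx)^+(f)=\sup\{g(x): g\in[0,f]\}$, and exhibit a witness $g\in[0,f]$ with $g(x)=f(x^+)$. Where you genuinely diverge is in the construction of that witness. The paper (Lemma~\ref{lem1:03.12}) builds it explicitly as $\xi(y)=\sup\{f(z): z\in[0,y]\cap E\}$ with $E=\bigcup_n n[0,x^+]$ --- essentially the component of $f$ supported on the band generated by $x^+$ --- and must then verify additivity via the Decomposition property~\ref{decomposition_property}, positivity, $\xi\leq f$, and $\xi(x^-)=0$. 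You instead apply Hahn--Banach with the sublinear majorant $p(y)=f(y^+)$, extending $\lambda x\mapsto\lambda f(x^+)$ from $\R x$; your verifications (subadditivity from $(y+z)^+\leq y^++z^+$ and $f\geq 0$, the domination for $\lambda<0$, the deductions $0\leq g\leq f$ from $g\leq p$, and continuity via Proposition~\ref{positive_continuous}) are all sound. Your route is shorter and avoids the decomposition-property computation, at the cost of invoking the axiom of choice and producing a non-canonical functional; the paper's route is constructive and yields the specific band component $\xi$ of $f$, which is a more informative object but requires the extra lemma. Either argument is a complete proof.
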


For the sake of clarity, the proof of the above theorem is prepared with a lemma.

\begin{lem}\label{lem1:03.12}
 Let $V$ be a Banach lattice and $f$ a positive element of $V^*$. If $x$ is any element of $V$, then the map $\xi$ defined on $V^+$ by
$$
\xi(y) := \sup\{\ f(z) : z \in [0, y] \cap E\},
$$
where $E:= \cup_{n \in \N} n[0, x^+]$ verifies the following properties.
\begin{itemize}
 \item[(a)]There is an unique positive bounded linear functional on $V$ extending $\xi$.
  \item[(b)]If we also denote this extension by $\xi$, then $\xi \leq f$ in $V^*$.
  \item[(c)]The equality $\xi(x^-) = 0$ holds.
\end{itemize}
\end{lem}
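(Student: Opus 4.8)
The plan is to verify the claimed properties of $\xi$ directly on the positive cone $V^+$ and then to obtain the linear extension via Proposition~\ref{def_on_cones} and its continuity via Proposition~\ref{positive_continuous}. The only structural features of $E$ needed are that it is \emph{solid, closed under addition and scale-invariant}: namely $0 \leq w \leq z \in E$ implies $w \in E$; $z_1, z_2 \in E$ implies $z_1 + z_2 \in E$ (since $z_i \leq n_i x^+$ gives $z_1 + z_2 \leq (n_1 + n_2) x^+$); and $\lambda E = E$ for every $\lambda > 0$, because $\bigcup_n n[0, x^+]$ and $\bigcup_n n[0, \lambda x^+]$ coincide, as one checks by bounding $n/\lambda$ (resp. $n\lambda$) above by an integer. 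Since $0 \in E$ and $f \geq 0$, for every $y \in V^+$ the set $\{f(z) : z \in [0, y]\cap E\}$ is non-empty and, by positivity of $f$, bounded above by $f(y)$; hence $\xi(y)$ is a well-defined real number with $0 \leq \xi(y) \leq f(y)$.

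Next I would check that $\xi$ is positively homogeneous and additive on $V^+$. Homogeneity follows from $[0, \lambda y] = \lambda [0, y]$ (Proposition~\ref{prorietes_lattices}(b)) together with $\lambda E = E$. For additivity, put $y := y_1 + y_2$ with $y_1, y_2 \in V^+$. If $z_i \in [0, y_i]\cap E$ then $z_1 + z_2 \in [0, y]\cap E$, so $f(z_1) + f(z_2) \leq \xi(y)$ and taking suprema gives $\xi(y_1) + \xi(y_2) \leq \xi(y)$. Conversely, given $z \in [0, y]\cap E$, the Decomposition property~\ref{decomposition_property} yields $z = z_1 + z_2$ with $z_i \in [0, y_i]$; since $0 \leq z_i \leq z \in E$, solidity of $E$ forces $z_i \in E$, so $f(z) = f(z_1) + f(z_2) \leq \xi(y_1) + \xi(y_2)$, whence $\xi(y) \leq \xi(y_1) + \xi(y_2)$.

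Part (a) then follows: by Proposition~\ref{def_on_cones}, $\xi$ admits a unique linear extension to $V$, and that extension is positive because $\xi \geq 0$ on $V^+$; since $V$ is complete and $\R$ is a normed lattice, Proposition~\ref{positive_continuous} shows the extension is continuous, i.e. a bounded linear functional. Part (b) is immediate from the first paragraph, since $\xi(y) \leq f(y)$ for all $y \in V^+$ is exactly the statement $\xi \leq f$ for the canonical order on $V^* = V^\#$. For part (c) it suffices to show $[0, x^-]\cap E = \{0\}$: if $0 \leq z \leq x^-$ and $z \leq n x^+$, then $z \leq x^- \infi (n x^+)$, and iterating Corollary~\ref{cor_decomposition_property} gives $x^- \infi (n x^+) \leq n\,(x^- \infi x^+) = 0$ because $x^+ \infi x^- = 0$ by Proposition~\ref{prorietes_lattices}(e); hence $z = 0$ and $\xi(x^-) = f(0) = 0$.

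The only genuinely delicate point is the additivity of $\xi$ on $V^+$, which is where both the Decomposition property and the solidity of $E$ are essential; homogeneity, positivity, continuity, and (c) are routine applications of the results already established.
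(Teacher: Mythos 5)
Your proof is correct and follows essentially the same route as the paper: additivity of $\xi$ via the Decomposition property, extension and positivity via Proposition~\ref{def_on_cones}, continuity via Proposition~\ref{positive_continuous}, part (b) by dropping the constraint $z \in E$, and part (c) via the inequality $x^- \infi (nx^+) \leq n(x^- \infi x^+) = 0$. You supply some details the paper leaves implicit (notably the solidity of $E$, which is exactly what makes the additivity argument work), and your part (c) dispenses with the paper's $\epsilon$-argument by showing directly that $[0, x^-] \cap E = \{0\}$, a mild streamlining of the same computation.
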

\begin{proof}
 Let $x$ belong to $V$ and define $\xi$ on $V^+$ as in the statement of the lemma. It is easy to show that $\xi$ is positively homogeneous. Using Decomposition property~\ref{decomposition_property} one also shows that it is additive. By Proposition~\ref{def_on_cones}, $\xi$ admits an unique positive extension in the algebraic dual of $V$. By Proposition~\ref{positive_continuous}, this extension belongs to $V^*$. This proves part (a). Since $f$ is positive, part (b) is true because
\begin{eqnarray*}
\xi(y) &=& \sup  \{f(z) : z \in [0, y] \text{ and } z \in E\}\\
      &\leq& \sup  \{f(z) : z \in [0, y] \} \leq f(y).
\end{eqnarray*}

To prove part (c), pick $\epsilon >0$ and a vector $z$ in $[0, x^-]\cap E$ such that $\xi(x^-) \leq f(z) + \epsilon$. Two cases can be distinguished. If the only integer $n$ such that $z \in n[0, x^+]$ is $0$, then $z = 0$ and it follows that $ 0 \leq \xi (x^-) \leq \epsilon$. If not, then we can find an integer $n > 0$ such that
$$
z \leq x^- \infi nx^+ \leq n(x^- \infi x^+) = 0.
$$
Thus, in both cases, $0 \leq \xi (x^-) \leq \epsilon$. Since $\epsilon$ is arbitrary, it is true that $\xi(x^-) = 0$.
\end{proof}

\begin{proof}[Proof of Theorem~\ref{embedding_homomorphism}]
 Let $V$ be a Banach lattice and let $J$ denote the canonical embedding of $V$ into its double dual. We will show that $(Jx)^+ = Jx^+$ for every $x$ in $V$ and conclude using Proposition~\ref{homo_positive_part}.

By Theorem~\ref{embedding_positive}, the map $J$ is positive. This implies that $J$ is monotonically increasing and using these two properties, we have
$$
(Jx)^+ = Jx \supr 0 \leq Jx^+ \supr 0 = Jx^+
$$
for every $x$ in $V$. This is a half of the result.

To show the reverse inequality, let $f$ be any positive element of $V^*$ and $x$ belong to $V$. For these $f$ and $x$, define $\xi$ as in Lemma~\ref{lem1:03.12} and write $\xi$ its unique extension in $V^*$. Since $\xi (x^-) = 0$, we have $\xi(x) = f(x^+)$. Using Corollary~\ref{absolute_value_dual} to justify the last equality below, we see that
$$
Jx^+ (f) = f(x^+) = \xi(x) \leq \sup \{g(x) : g \in [0, f]\} = (Jx)^+(f).
$$
This shows that $Jx^+  \leq (Jx)^+$.
\end{proof}
%
%
\section{Basic constructions}

We describe now some basic constructions around vector lattices that will be used later in a particular case. Starting with any set and a vector lattice, new vector lattices will be constructed. Depending on the hypothesis on these spaces, we will see what kind of vector lattice we obtain. We will proceed in the form of a discussion and resume the results at the end of the section in a proposition. We recall that by default, all the following topological spaces have the Hausdorff property.

Let $X$ be a set and $V$ a vector lattice. We define a partial order on $V^X$ by declaring that $f \leq g$ if and only if $f(x) \leq g(x)$ for every $x$ in $X$. We call this order the \emph{pointwise order}\index{pointwise order}. Under that order, $V^X$ is a vector lattice whose lattice operations are given by $(f\supr g)(x) = f(x) \supr g(x)$. In particular, the absolute value is given by $|f|(x) = |f(x)|$.

Assume now that $X$ is a topological space and that $V$ is a topological vector space. The pointwise order on $V^X$ induces an order on $C(X,V)$. It is almost obvious that it is a lattice order whenever the map $\supr : V \times V \rightarrow V$ is continuous. In particular, if $V$ is a normed lattice, then $C(X, V)$ is a vector lattice.

Assume moreover that $X$ is compact and that $V$ is a normed lattice. We endow $C(X, V)$ with the uniform norm. If $f$ and $g$ are two maps in $C(X,V)$, then $|f| \leq |g|$ if and only if $|f(x)| \leq |g(x)|$ for every $x$ in $X$. The norm on $V$ being compatible with the order, this implies that $\|f(x)\| \leq \|g(x)\|$ for every $x$ in $X$ and so $\|f\| \leq \|g\|$. This shows that under our assumptions $C(X, V)$ is a normed lattice.

Finally, assume moreover that $V$ is complete. In this case, $C(X, V)$ is also a complete space and therefore $C(X, V)$ is a Banach lattice.

\begin{prop}\label{constructions_lattices}
 Let $X$ be a set and $V$ a vector lattice.
\begin{itemize}
  \item[(a)]The set $V^X$ is a vector lattice under the pointwise order.
  \item[(b)]If $X$ is a topological space and $V$ a topological vector space, then $C(X, V)$ is a vector lattice whenever the supremum operation is continuous on $V \times V$.
  \item[(c)]If $X$ is a compact space and $V$ a normed lattice, then $C(X, V)$ endowed with the uniform norm is a normed lattice. Moreover, if $V$ is complete, then $C(X, V)$ is a Banach lattice.
\end{itemize}
\end{prop}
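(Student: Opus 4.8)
The plan is to prove the three parts in order, each one relying on the previous, since most of the needed verifications have already been made in the discussion preceding the statement; the proof is mostly a matter of organizing them.

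For part (a), I would first observe that the pointwise relation is a genuine partial order on $V^X$: reflexivity, antisymmetry and transitivity all follow coordinatewise from the corresponding properties of $\leq$ on $V$. Then I would exhibit the lattice operations by setting $(f \supr g)(x) := f(x) \supr g(x)$ and $(f \infi g)(x) := f(x) \infi g(x)$ and checking that these are, respectively, the least upper bound and the greatest lower bound of $\{f, g\}$: indeed any $h$ with $f \leq h$ and $g \leq h$ satisfies $f(x) \supr g(x) \leq h(x)$ for every $x$, which is exactly $f \supr g \leq h$. Compatibility of the order with the vector space structure is inherited coordinatewise: if $f \leq g$, then for every $x$ we have $f(x) + h(x) \leq g(x) + h(x)$ and $\lambda f(x) \leq \lambda g(x)$ for every $\lambda > 0$, that is, $f + h \leq g + h$ and $\lambda f \leq \lambda g$. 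In particular the absolute value in $V^X$ is computed pointwise, $|f|(x) = |f(x)|$.

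For part (b), I would restrict the pointwise order to the subspace $C(X, V)$; it is still a partial order compatible with the vector space operations, so the only thing to check is that $C(X, V)$ is stable under the lattice operations and that they are still computed pointwise. Given $f, g$ in $C(X, V)$, the map $x \mapsto (f(x), g(x))$ is continuous from $X$ into $V \times V$, so if $\supr : V \times V \to V$ is continuous then $x \mapsto f(x) \supr g(x)$ is continuous; call it $f \supr g$. It lies in $C(X, V)$ and, exactly as in part (a), it is the supremum of $\{f, g\}$ in $C(X, V)$, since any continuous upper bound is in particular a pointwise upper bound. The infimum is handled symmetrically (or via $f \infi g = -((-f) \supr (-g))$), and the remaining vector-lattice identities hold in $C(X, V)$ because they already hold in $V^X$.

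For part (c), assume $X$ compact and $V$ a normed lattice. By Proposition~\ref{properties_normes_lattices}(a) the supremum is continuous on $V \times V$, so part (b) applies and $C(X, V)$ is a vector lattice, with pointwise absolute value. The uniform norm $\|f\| := \sup_{x \in X} \|f(x)\|$ is well defined because $X$ is compact and $x \mapsto \|f(x)\|$ is continuous. To see it is a lattice norm, note that $|f| \leq |g|$ means $|f(x)| \leq |g(x)|$ for every $x$; since $\|\cdot\|$ is a lattice norm on $V$, this gives $\|f(x)\| \leq \|g(x)\|$ for every $x$, hence $\|f\| \leq \|g\|$. Finally, if $V$ is complete I would invoke the classical fact that $C(X, V)$ with the uniform norm is complete: a Cauchy sequence is pointwise Cauchy, hence pointwise convergent to some $f : X \to V$; the convergence is then uniform, and a uniform limit of continuous maps into $V$ is continuous, so $f$ lies in $C(X, V)$. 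Thus $C(X, V)$ is a Banach lattice. I expect no real obstacle here; the only points deserving care are the verification in part (b) that the pointwise supremum is the least upper bound \emph{within} the smaller space $C(X, V)$ — which is precisely where the continuity of $\supr$ is used — and, in part (c), the appeal to the standard completeness of spaces of continuous functions valued in a Banach space.
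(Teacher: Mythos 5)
Your proposal is correct and follows essentially the same route as the paper, which proves this proposition via the informal discussion immediately preceding its statement: pointwise order and pointwise lattice operations on $V^X$, continuity of $\supr$ to keep $C(X,V)$ stable under the lattice operations, compatibility of the lattice norm computed pointwise, and the standard completeness of $C(X,V)$ for the uniform norm. You have merely supplied the routine verifications the paper leaves implicit, so nothing further is needed.
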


\begin{exple}\label{C(X, L1)_lattice:exple}
 Let $G$ be a locally compact topological group endowed with its left Haar measure. We claim that if $X$ is a compact space, then the set $C(X, L^1G)$ endowed with the uniform norm is a Banach lattice for some order. By part (c) of Proposition~\ref{constructions_lattices}, it is enough to show that $L^1G$ is a normed lattice for some order.

It is not hard to see that the pointwise order on $\R^G$ induces a well defined lattice order on the equivalence classes of $L^1(G)$ : the order is defined by declaring that $[f] \leq [g]$ if and only if $f \leq g$ almost everywhere, with $[f], [g]$ in $L^1G$. For every $[f]$ in $L^1G$, one can see that $[f]^\pm =  [f^\pm]$ and therefore $|[f]| = [|f|]$.

It immediately follows from these observations that $(L^1G, \| \cdot \|_1)$ is indeed a normed lattice. Since this normed space is complete, Proposition~\ref{constructions_lattices} applies to $C(X, L^1G)$.
\end{exple}

\chapter{The Bochner integral}\label{integral:Ch}
  The goal of this chapter is to introduce the \emph{Bochner integral}, which will enable us to integrate functions defined on some measure space and valued in a Banach space. It will be defined from the very beginning and some basic (but useful) results concerning the integral will be proved.

We owe this simple and concise presentation of the subject to a German note found on the Internet (see~\cite{bochn}).  Some generality will be added to this online presentation by introducing some weaker ``almost everywhere'' hypothesis.

Again, we recall that all the following topological spaces have the Hausdorff property.

\section{Functions spaces}\label{section_functions_spaces}

Even if most of the following definitions and results make sense and hold in a
more general setting than ours, the emphasis is put on the Banach space case
since we ultimately seek to define the Bochner integral for Banach space valued maps
only. When it will come to measurability questions, the Banach spaces will
always be endowed with their Borel $\sigma$-field.

\begin{defn}
Let $s, f, g : \Om \rightarrow B$ be maps from a measure space $(\Om,\mathcal{F}, \mu)$ to a Banach space $B$. The map $s$ is \emph{simple}\index{simple map} if
it is measurable and $s(\Om)$ is a finite set. In other words, $s$ is simple if
there are finitly many disjoint  sets $A_1, \dots, A_n$ in $\mathcal{F}$ such
that
$$
s = \sum_{i=1}^n \indicc{A_i} b_i,
$$
where the $b_i$'s belong to $B$. The map $f$ is \emph{almost everywhere
separable}\index{separable map} if there exist some $\mu$-null set $N$ in $\mathcal{F}$ and some
countable subset $Y$ of $B$ such that $f(\Om \backslash N)$ is contained in
$\overline{Y}$. Because of a misuse of language, such a map is said to be
\emph{separable}. Finally, the map $g$ is $\mu$-\emph{measurable}\index{mu-measurable@$\mu$-measurable map} if there is a
sequence $\{s_n\}_{n \in \N}$ of simple maps that converges pointwise to $g$
almost everywhere on $\Om$.
\end{defn}

Our first few steps will be to obtain some results about the classes of measurable
and separable maps.

\begin{lem}\label{lemma1:29.11}
 Let $\{f_n\}_{n \in \N}$ and $f$ be maps from a measure space $\Omega$ to a Banach space $B$ such that the $f_n$'s converge pointwise to $f$ almost everywhere on $\Om$.
\begin{itemize}
	\item[(a)] If all of the $f_n$'s are separable, then so is $f$.
	\item[(b)] If all of the $f_n$'s are measurable, then so is $f$.
\end{itemize}
\end{lem}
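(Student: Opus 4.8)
For part (a), I would argue by the usual countable-stability bookkeeping. For each $n$ pick a $\mu$-null set $N_n$ and a countable $Y_n\subseteq B$ with $f_n(\Omega\setminus N_n)\subseteq\overline{Y_n}$, and pick a $\mu$-null set $N_0$ off which $f_n(\omega)\to f(\omega)$. Then $N:=N_0\cup\bigcup_n N_n$ is $\mu$-null, being a countable union of $\mu$-null sets, and $Y:=\bigcup_n Y_n$ is countable. For $\omega\notin N$ every $f_n(\omega)$ lies in the closed set $\overline{Y}$ and $f_n(\omega)\to f(\omega)$, hence $f(\omega)\in\overline{Y}$; so $f(\Omega\setminus N)\subseteq\overline{Y}$ and $f$ is separable. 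This part is immediate.

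For part (b), the plan is to extract a single sequence of simple maps converging a.e.\ to $f$ by a diagonal selection from the approximations of the individual $f_n$. Fix, for each $n$, simple maps $s_{n,k}$ converging pointwise to $f_n$ off a $\mu$-null set $M_n$, and a $\mu$-null set $N_0$ off which $f_n\to f$; put $N:=N_0\cup\bigcup_n M_n$, which is $\mu$-null. Off $N$ we have $s_{n,k}(\omega)\to f_n(\omega)$ as $k\to\infty$ for every fixed $n$, and $f_n(\omega)\to f(\omega)$. Since $\|s_{n,k(n)}(\omega)-f(\omega)\|\le\|s_{n,k(n)}(\omega)-f_n(\omega)\|+\|f_n(\omega)-f(\omega)\|$, it suffices to choose indices $k(n)$ so that the first summand tends to $0$ almost everywhere; then $t_n:=s_{n,k(n)}$ is the sought sequence of simple maps.

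The delicate point --- and the one I expect to be the only genuine obstacle --- is that the convergence $s_{n,k}\to f_n$ is not uniform in $\omega$, so no pointwise recipe for $k(n)$ can work simultaneously for all $\omega$; I would get around this with a Borel--Cantelli argument on sets of small measure. For fixed $n$, as $K\to\infty$ the sets $(\Omega\setminus N)\setminus\bigcap_{k\ge K}\{\omega:\|s_{n,k}(\omega)-f_n(\omega)\|<1/n\}$ decrease to $\emptyset$, since every $\omega\notin N$ eventually enters the intersection; hence they have measure tending to $0$ when $\mu(\Omega)<\infty$, and one may choose $k(n)$ so that $\mu(\{\omega\notin N:\|s_{n,k(n)}(\omega)-f_n(\omega)\|\ge 1/n\})<2^{-n}$. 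Borel--Cantelli then gives $\|s_{n,k(n)}(\omega)-f_n(\omega)\|<1/n$ for all large $n$ at almost every $\omega$, and together with $f_n(\omega)\to f(\omega)$ this yields $t_n(\omega)=s_{n,k(n)}(\omega)\to f(\omega)$ a.e. When $\mu$ is merely $\sigma$-finite one runs the same estimate on an increasing exhaustion by finite-measure sets and lets $k(n)$ be the largest index needed through stage $n$; the fully general case reduces to this, and in any event the measure spaces relevant to the Bochner integral later have $\sigma$-finite support for the functions at hand. (Alternatively, once the Pettis measurability theorem is available, part (b) follows from part (a) together with the scalar fact that an a.e.\ limit of measurable functions is measurable.) Apart from this diagonal step the argument is pure null-set bookkeeping.
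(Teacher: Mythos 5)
Part (a) is exactly the paper's argument (union of the null sets, union of the countable sets) and is correct.

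Part (b) is where you diverge, and I believe you have proved the wrong statement. In this paper ``measurable'' (unqualified) means Borel measurable --- pre-images of Borel subsets of $B$ lie in the $\sigma$-field of $\Omega$ --- whereas ``$\mu$-measurable'' is reserved for the distinct notion of being an almost everywhere pointwise limit of simple maps; the whole point of Lemma~\ref{equivalence1} just below is to prove these two notions (together with separability) equivalent, and part (b) of the present lemma is an ingredient of that equivalence. Your diagonal extraction establishes that an a.e.\ limit of $\mu$-measurable maps is again $\mu$-measurable. That is a true and even useful fact, and your Borel--Cantelli selection of the indices $k(n)$ is the right way to handle the non-uniformity of the convergence $s_{n,k}\to f_n$; but it is not what is asserted. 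To pass from ``$f$ is an a.e.\ limit of simple maps'' to ``$f$ is Borel measurable'' you would need precisely part (b) applied to the simple approximants, so as a proof of the stated claim the argument is circular. It also quietly imports a $\sigma$-finiteness hypothesis that the lemma does not make (the Egorov-type choice of $k(n)$ needs sets of finite measure), and the asserted reduction of the general case to the $\sigma$-finite one is not justified.

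The paper's proof of (b) is much shorter and avoids simple functions altogether: since the Borel $\sigma$-field of $B$ is generated by the closed sets, it suffices to show that $f^{-1}(A)$ is measurable for every closed $A\subseteq B$; composing with the continuous function $x\mapsto d(x,A)$ reduces this to the scalar fact that an a.e.\ pointwise limit of real-valued measurable functions is measurable, because $f^{-1}(A)=(d(\cdot,A)\circ f)^{-1}(\{0\})$. This is essentially the ``alternative'' you mention in your closing parenthesis, except that no appeal to the Pettis measurability theorem is needed --- the distance-function trick does the job directly, for each $A$ separately.
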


\begin{proof}
	To prove part (a), let $M$ be a null subset of $\Om$ on the complement of which the $f_n$'s converge pointwise to $f$. For every interger $n$, let
$N_n$ be a null subset of $\Om$ and $Y_n$ a countable subset of $B$
such that $f_n(\Om \backslash N_n)$ is contained in $\overline{Y_n}$. Set
$$
N:= M \cup \left( \bigcup_{n \in \N} N_n \right)	 \quad \mbox{ and }
\quad Y := \bigcup_{n \in \N} Y_n.
$$
The subset $N$ is of measure zero and $Y$ is countable. Moreover, $f(\Om
\backslash N)$ is contained in $\overline{Y}$ and therefore $f$ is separable.

Let us prove part (b). Since the Borel $\sigma$-field of $B$ is generated by the closed subsets of
$B$, it is enough to see that the pre-images under $f$ of closed subsets of $B$ are measurable subsets of $\Om$. For a closed subset $A$ of $B$ and every $x$ in $B$, let
$g(x)$ denote the distance from $x$ to $A$. The function $g$ is continuous and
$A = g^{-1}(\{0\})$. The continuity of $g$ implies that the $(g \circ f_n)$'s converge pointwise to
$g \circ f$ almost everywhere on $\Om$. Since the almost everywhere
pointwise limit of an $\R$-valued sequence of measurable maps is measurable, $g
\circ f$ is measurable. Thus $f^{-1}(A)$, which is equal to $(g \circ f)^{-1}(\{0\})$, is
measurable.
\end{proof}

\begin{lem}\label{lemme2:29.11}
	Let $f, g : \Om \rightarrow B$ be maps from a measure space
$\Om$ to a Banach space $B$. If $f$ and $g$ are separable, then so is $f+g$.
\end{lem}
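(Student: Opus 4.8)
The plan is to manufacture a witnessing null set and countable set for $f+g$ directly out of those for $f$ and $g$. First I would apply the definition of separability twice: pick a $\mu$-null set $N_f$ in $\mathcal{F}$ and a countable subset $Y_f$ of $B$ with $f(\Om \backslash N_f) \subseteq \overline{Y_f}$, and similarly a null set $N_g$ and a countable set $Y_g$ for $g$. Then I would set $N := N_f \cup N_g$, which is still $\mu$-null as a finite union of null sets, and consider $Y := Y_f + Y_g = \{ y + y' : y \in Y_f,\ y' \in Y_g \}$, which is countable because it is the image of the countable set $Y_f \times Y_g$ under the addition map.

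Next I would check that $(f+g)(\Om \backslash N)$ is contained in $\overline{Y}$. For $\omega$ outside $N$ we have $f(\omega) \in \overline{Y_f}$ and $g(\omega) \in \overline{Y_g}$, so the claim reduces to the inclusion $\overline{Y_f} + \overline{Y_g} \subseteq \overline{Y_f + Y_g}$. This is the only point needing an argument, and it rests on the continuity of addition in $B$: given $a \in \overline{Y_f}$ and $b \in \overline{Y_g}$, choose sequences $y_n \in Y_f$ and $y_n' \in Y_g$ with $y_n \to a$ and $y_n' \to b$; then $y_n + y_n' \to a+b$, and each $y_n + y_n'$ lies in $Y_f + Y_g$, so $a+b \in \overline{Y_f+Y_g}$.

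Combining these observations, $(f+g)(\Om \backslash N) \subseteq \overline{Y_f} + \overline{Y_g} \subseteq \overline{Y}$ with $N$ $\mu$-null and $Y$ countable, which is exactly the assertion that $f+g$ is separable. There is no genuine obstacle in this lemma; the mild subtlety is simply the set-theoretic inclusion above, together with the remark that a product of two countable sets is countable.
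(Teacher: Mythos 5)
Your proposal is correct and follows exactly the paper's argument: take $N = N_f \cup N_g$ and $Y = Y_f + Y_g$, then observe $(f+g)(\Om \backslash N) \subseteq \overline{Y}$. The paper simply asserts this last inclusion, whereas you supply the (routine) justification via continuity of addition, so the two proofs coincide in substance.
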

\begin{proof}
	Assume that $f$ and $g$ are separable. Let $N_1, N_2$
be null subsets of $\Om$ and $Y_1, Y_2$ countable subsets of $B$ witnessing the separability of $f$ and $g$, respectively. If $N$ is the union of $N_1$ and $N_2$, then $(f+g)(\Om \backslash N)$ is contained in $\overline{Y_1 + Y_2}$. Since $N$ is of measure zero and $Y_1 + Y_2$ is countable, the map $f+g$ is separable.
\end{proof}

The following lemma will help us to translate some statements about separable maps
into statements about $\mu$-measurable maps and conversely.

\begin{lem}\label{equivalence1}
	Let $f : \Om \rightarrow B$ be a map from a measure space $\Om$ to a
Banach space $B$. The following statements are equivalent.
\begin{itemize}
	\item[(a)]The map $f$ is $\mu$-measurable and the sequence
$\{s_n\}_{n \in \N}$ in the definition of $\mu$-measurability can be choosen in
such a way that $\|s_n (\om)\| \leq 2\| f(\om)\|$ almost everywhere.
	\item[(b)]The map $f$ is measurable and separable.
\end{itemize}
\end{lem}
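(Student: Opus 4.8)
The plan is to establish the two implications separately. The implication (a) $\Rightarrow$ (b) costs little: if $f$ is $\mu$-measurable, fix a sequence $\{s_n\}_{n\in\N}$ of simple maps converging to $f$ pointwise almost everywhere (the extra norm estimate in (a) plays no role here). Each $s_n$ is measurable by the definition of a simple map and separable because its range is finite, hence contained in the closure of a countable set. Lemma~\ref{lemma1:29.11} applied to the sequence $\{s_n\}_{n\in\N}$ then yields at once that $f$ is both measurable and separable.

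The substance of the lemma is the converse (b) $\Rightarrow$ (a), for which I would build the approximating simple maps by hand. Choose a $\mu$-null set $N$ and a countable set $Y=\{y_1,y_2,\dots\}\subseteq B$ with $f(\Om\setminus N)\subseteq\overline{Y}$ (take $Y=\{0\}$ if $Y$ is empty). For each $n$ put $Y_n:=\{y_1,\dots,y_n\}$ and $d_n(\om):=\min_{1\le k\le n}\|f(\om)-y_k\|$; since $f$ is measurable, each $\om\mapsto\|f(\om)-y_k\|$ is measurable, hence so are $d_n$ and $\om\mapsto\|f(\om)\|$. Then define $s_n\colon\Om\to B$ by setting $s_n(\om):=y_k$, where $k$ is the least index in $\{1,\dots,n\}$ with $\|f(\om)-y_k\|=d_n(\om)$, whenever $d_n(\om)<\|f(\om)\|$, and $s_n(\om):=0$ otherwise.

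It then remains to verify three points. First, $s_n$ is simple: its range lies in the finite set $Y_n\cup\{0\}$, and each of its finitely many level sets is measurable, being carved out of $\Om$ by the measurable functions $d_n$, $\|f(\cdot)-y_k\|$ and $\|f(\cdot)\|$. Second, the norm bound: if $s_n(\om)=0$ it is trivial, while if $s_n(\om)=y_k$ then $\|y_k\|\le\|f(\om)-y_k\|+\|f(\om)\|=d_n(\om)+\|f(\om)\|<2\|f(\om)\|$, so $\|s_n(\om)\|\le 2\|f(\om)\|$ for every $\om$. Third, pointwise convergence on $\Om\setminus N$: if $f(\om)=0$ then $d_n(\om)\ge 0=\|f(\om)\|$ forces $s_n(\om)=0=f(\om)$ for all $n$; if $f(\om)\ne 0$ then, $f(\om)$ lying in $\overline{Y}$, for every $\varepsilon\in(0,\|f(\om)\|)$ there is $y_k\in Y$ with $\|f(\om)-y_k\|<\varepsilon$, and since $y_k\in Y_n$ for all $n\ge k$ we obtain $d_n(\om)<\varepsilon<\|f(\om)\|$ and hence $\|f(\om)-s_n(\om)\|=d_n(\om)<\varepsilon$ for all $n\ge k$. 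Thus $s_n\to f$ almost everywhere with $\|s_n\|\le 2\|f\|$ everywhere, which is (a).

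The main obstacle is exactly the tension addressed in the construction above: reconciling the requirement that the $s_n$ have \emph{finite} range with genuine pointwise (not merely in measure) convergence off $N$. Searching for the nearest point inside the growing finite sets $Y_n$ avoids the alternative of first forming a countably valued approximant and then truncating it, which would leave one fighting a diagonalisation (or a finiteness-of-measure) issue; and the cutoff ``$d_n(\om)<\|f(\om)\|$, else $0$'' is precisely what simultaneously buys the factor $2$ in the estimate and disposes of the points where $f$ vanishes.
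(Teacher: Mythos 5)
Your proof is correct, and the direction that matters, (b) $\Rightarrow$ (a), is organised differently from the paper's. Both arguments run on the same fuel --- enumerate the countable set $Y$ witnessing separability, approximate $f(\om)$ by elements of $Y$, and impose a cutoff near $0$ so that the chosen value $y_k$ satisfies $\|y_k\|\leq\|f(\om)-y_k\|+\|f(\om)\|\leq 2\|f(\om)\|$ --- but the selection mechanisms differ. The paper partitions (part of) $\Om$ by a doubly indexed family $C_i^{1/m}$ recording the \emph{first} $y_i$ whose ball of radius $1/m$ captures $f(\om)$ while $\|f(\om)\|\geq 1/m$, and then defines $s_n$ by picking, for each $\om$, the finest resolution $1/m_0$ available with both indices at most $n$; this forces the bookkeeping around $m_0(\om,n)$ and the estimate $\|s_N(\om)-f(\om)\|\leq 1/m_0$. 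You instead take the genuine nearest point of $\{y_1,\dots,y_n\}$ to $f(\om)$ (least index in case of ties), with the single cutoff $d_n(\om)<\|f(\om)\|$, and the measurability of the level sets, the factor $2$, and the pointwise convergence on $N^c$ all drop out in one or two lines each. The two constructions prove the identical statement; yours trades the paper's two-parameter ball decomposition for a nearest-point projection onto an increasing finite net, which eliminates the $m_0$ diagonalisation entirely and is the cleaner write-up. The only detail worth making explicit is that measurability of $\{s_n=y_k\}$ uses the measurability of $f$ assumed in (b) (via the real-valued functions $\|f(\cdot)-y_k\|$ and $\|f(\cdot)\|$), which you do note; nothing is missing.
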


\begin{rmq}
 It must be stressed that the domination condition in part (a) of Lemma~\ref{equivalence1} is not used in order to obtain part (b).
\end{rmq}

\begin{proof}
	The fact that (a) implies (b) is an easy application of
Lemma~\ref{lemma1:29.11}. Our task is to prove that (b) implies (a). Let $N$ be a null subset of $\Om$ and $Y$ a countable subset of $B$ witnessing the separability of $f$.
Without loss of generality, we can assume that $Y$ does not contain $0$. In
fact, up to remove $0$ from $Y$ and to add a sequence of non-zero vectors
converging to $0$, we obtain a new countable subset of $B$ whose closure contains
$\overline{Y}$.

Let $\{y_1, y_2, \dots\}$ be an enumeration of $Y$. For $n$ in $\N^*$ and
$\delta >0$, define 
$$
A_n^\delta := f^{-1}\left(B_\delta (0)^c \cap B_\delta(y_n)\right) \quad \mbox{
and } \quad C_n^\delta := \left(A_n^\delta \backslash \bigcup_{1 \leq i < n}
A_i^\delta \right) \cap N^c,
$$
where $B_\delta(y)$ is the open ball around $y$ of radius $\delta$. Since $f$
is measurable, the $C_n^\delta$'s are measurable subsets of $\Om$. For $1 \leq m < n$, define
$$
t_n^m := \sum_{1 \leq i \leq n} \indicc{C_i^{1/m}} y_i.
$$
The time has come to define the sequence we are looking for. Define
$$
s_n(\om):=
\begin{cases}
	0 & \text{ if } t_n^m
(\om) = 0 \text{ for every } 1\leq m \leq n, \\
	t_n^{m_0}(\om) & \text{ otherwise },
\end{cases}
$$
where $m_0 = m_0(\om, n)$ is the greatest integer $m$ in $\{1, 2, \dots, n\}$ such
that $t_n^m (\om) \neq 0$.
A basic rewriting shows that the $s_n$'s are a simple maps. The definitions and
the $m_0$ notation imply that
\begin{equation}\label{eq1:29.11}
\|s_N(\om) - f(\om)\| \leq \frac{1}{m_0(\om, N)} \quad  \left(\omega \in \bigcup_{m \leq N} \bigcup_{i\leq N} C_i^{1/m}\right).
\end{equation}

To show that the $s_n$'s converge pointwise to $f$ almost everywhere, two cases can be distinguished. First, assume that $\om$ in $N^c$ is such that $f(\om) = 0$. This
means that $\om$ does not belong to any of the $C^\delta_n$'s defined above and
therefore $s_n(\om) = 0$ for every integer $n$. Now assume that $\om$ in  $N^c$
is such that $f(\om) \neq 0$ and let $N_0$ be an integer such that
$$
\|f(\om)\| \geq \frac{1}{N_0}.
$$
Since for a fixed $\delta$ the $C^\delta_n$'s are disjoint, there is a unique $n_0$ such that $\om$ belongs to $C^{1/N_0}_{n_0}$. This implies that for every integer $N$ greater than $N_0$ and
$n_0$
$$
\om \in \bigcup_{1\leq i \leq N} C^{1/N_0}_i
$$
and $m_0 (\om, N) \geq N_0$. Using equation~\eqref{eq1:29.11}, this in turn
implies that for every integer $N$ greater than $N_0$ and $n_0$
$$
\|s_N(\om) - f(\om)\| \leq \frac{1}{m_0} \leq \ \frac{1}{N_0}.
$$
Letting $N_0$ tend to infinity, this shows that the $s_n(\om)$'s  converge to $f(\om)$
for every $\om$ in $N^c$ such that $f(\om) \neq 0$. In conclusion, the $s_n$'s
converge to $f$ pointwise on $N^c$.

We finish the proof by establishing the estimate about $\|s_n (\om)\|$. For
every $\om$ in $C^{1/m}_n$ with $m, n \leq N$,
$$
\|t^m_N(\om)\| = \|y_n\| \leq \|y_n - f(\om)\| + \| f(\om)\| \leq 2 \|f(\om)\|,
$$
by definition of the $A_n^\delta$'s. The very definition of $s_N$ implies that this estimate holds with $s_N$ instead of $t^m_N$ and so
$$
\|s_N (\om) \|\leq 2 \|f(\om)\| \quad \left(\om  \in \bigcup_{m \leq N} \bigcup_{i\leq N} C_i^{1/m}\right).
$$
Since for the other $\om$ in  $N^c$ we have $s_N (\om) = 0$, the estimate holds whenever $\om$ belongs to $N^c$.
\end{proof}

This proof was a little bit tedious but now we have at our disposal a useful equivalence. On one hand, we have the notion of separable function, wich is somewhat abstract but leads easily to results like Lemma~\ref{lemme2:29.11}. On the other hand, the notion of $\mu$-measurable function is very constructive : it is not hard to guess how those functions could be integrated. From now on, the strategy is the following : use the abstract side to elegantly obtain results about $\mu$-measurable functions and the concrete one to define the Bochner integral of $\mu$-mesurable maps.

\begin{lem}\label{lemme1:30.11}
 Let $\Om$ be a measure space and $B$ a Banach space. Define $\mathcal{L}^0 :=\{f : \Om \rightarrow B : f \text{ is measurable and separable}\}$.
\begin{itemize}
 \item[(a)] The subset $\mathcal{L}^0$ is a vector subspace of $B^\Om$.
  \item[(b)] If $s : \Om \rightarrow \R$ is measurable, then $s \mathcal{L}^0 \subseteq \mathcal{L}^0$.
\end{itemize}
\end{lem}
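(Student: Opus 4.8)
The plan is to verify the two closure properties directly, discharging the separability half with the soft tools already available (the definition of a separable map and Lemma~\ref{lemme2:29.11}) and the measurability half by detouring through $\mu$-measurability, using Lemma~\ref{equivalence1} to translate back and forth.

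For part (a), I would first dispose of the easy points: the zero map belongs to $\mathcal{L}^0$ (a constant map is measurable and has finite range), and $\mathcal{L}^0$ is stable under multiplication by a scalar $\lambda \in \R$, since $\lambda f$ is the composition of $f$ with the continuous map $x \mapsto \lambda x$ (hence measurable) and $(\lambda f)(\Om \setminus N) \subseteq \lambda\,\overline{Y} \subseteq \overline{\lambda Y}$ with $\lambda Y$ countable whenever $f(\Om\setminus N) \subseteq \overline{Y}$ (alternatively, scalar stability is the special case $s \equiv \lambda$ of part (b)). The real content is stability under addition. Given $f, g \in \mathcal{L}^0$, the map $f+g$ is separable by Lemma~\ref{lemme2:29.11}. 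For its measurability I would invoke Lemma~\ref{equivalence1}: $f$ and $g$, being measurable and separable, are $\mu$-measurable, so there are sequences of simple maps $\{s_n\}$ and $\{t_n\}$ converging pointwise almost everywhere to $f$ and $g$; then $\{s_n + t_n\}$ is again a sequence of simple maps (it is constant on each piece of the common refinement of the two defining partitions, and has finite range) converging to $f+g$ almost everywhere, so $f+g$ is $\mu$-measurable, hence measurable as an almost everywhere pointwise limit of measurable maps by Lemma~\ref{lemma1:29.11}(b). Therefore $f+g \in \mathcal{L}^0$.

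For part (b), fix $f \in \mathcal{L}^0$ and a measurable $s : \Om \to \R$; the goal is $sf \in \mathcal{L}^0$. For separability, pick a $\mu$-null set $N$ and a countable $Y \subseteq B$ with $f(\Om \setminus N) \subseteq \overline{Y}$, and let $D$ be the (countable) set of all rational scalar multiples of elements of $Y$. For $\om \notin N$, writing $f(\om) = \lim_n y_n$ with $y_n \in Y$ and $s(\om) = \lim_m q_m$ with $q_m$ rational, one gets $q_m f(\om) = \lim_n q_m y_n \in \overline{D}$ and then $s(\om) f(\om) = \lim_m q_m f(\om) \in \overline{D}$ by continuity of scalar multiplication; hence $(sf)(\Om\setminus N) \subseteq \overline{D}$ and $sf$ is separable. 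For measurability, apply Lemma~\ref{equivalence1} once more: $f$ is $\mu$-measurable, and $s$, being real-valued and measurable (hence trivially separable since $\R$ is separable), is $\mu$-measurable too, so there are simple maps $t_n \to f$ and $\sigma_n \to s$ almost everywhere; the products $\sigma_n t_n$ are simple and converge to $sf$ almost everywhere, by joint continuity of $(\lambda, v) \mapsto \lambda v$ at the points where both $\sigma_n$ and $t_n$ converge. Thus $sf$ is $\mu$-measurable, hence measurable by Lemma~\ref{lemma1:29.11}(b), and so $sf \in \mathcal{L}^0$.

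The only genuine obstacle is the measurability of $f+g$ and of $sf$: for a non-separable $B$ the sum or a scalar multiple of Borel-measurable $B$-valued maps need not be Borel measurable, because $(f,g) : \Om \to B \times B$ need not be measurable for the Borel $\sigma$-field of the product. This is precisely the reason the excerpt set up $\mu$-measurability and Lemma~\ref{equivalence1}: on the level of approximating simple maps the algebraic operations are harmless, and Lemma~\ref{lemma1:29.11}(b) transports the conclusion back. Everything else is routine bookkeeping with null sets and countable sets.
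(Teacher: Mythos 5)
Your proof is correct and follows essentially the same route as the paper: separability of $f+g$ via Lemma~\ref{lemme2:29.11}, and all the measurability claims by passing through Lemma~\ref{equivalence1} to approximating simple maps, doing the algebra there, and returning via Lemma~\ref{lemma1:29.11}. The extra details you supply (the explicit countable set of rational multiples for the separability of $sf$, and the check that sums and products of simple maps are simple) are correct refinements of steps the paper leaves implicit.
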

\begin{proof}
 Let $f$ and $g$ belong to $\mathcal{L}^0$. Part (a) is a consequence of Lemma~\ref{lemme2:29.11} and Lemma~\ref{equivalence1}. In fact, the former ensures us that $f+g$ is still separable. The latter ensures us that we can find sequences $f_n$ and $g_n$ of simple maps that converge pointwise to $f$ and $g$ almost everywhere. In particular, the sequence $f_n+g_n$ converges pointwise to $f+g$ almost everywhere. Using Lemma~\ref{lemma1:29.11}, we obtain that $f+g$ is measurable. A similar argument shows that $\R \mathcal{L}^0$ is contained in $\mathcal{L}^0$.

Part (b) is proved by observing that if $s : \Om \rightarrow \R$ is measurable, then $s$ is a measurable and separable function because $\Q$ is dense in $\R$. By Lemma~\ref{equivalence1}, we see that the product $s f$ is almost everywhere the pointwise limit of a sequence of simple maps, whenever $f$ belongs to $\mathcal{L}^0$. Thus $s f$ belongs to $\mathcal{L}^0$.
\end{proof}

\section{The integral}

Let us begin with the integral of simple maps. Let $(\Om, \mathcal{F}, \mu)$ be a measure space and $B$ a Banach space. A simple map $s : \Om \rightarrow B$ with
$$
s = \sum_{i=1}^n \indicc{A_i} y_i
$$
is \emph{integrable} if $\mu(A_i)$ is finite for every $i$ in $\{1, \dots, n\}$. The Bochner integral over $\Om$ of such an integrable simple map is defined by
\begin{equation*}\label{integrale_simple}
\int_\Om s \, d\mu := \sum_{i=1}^n \mu(A_i) y_i.
\end{equation*}
For every simple and integrable map $s$, we have
\begin{equation*}\label{ineg_integrale_simple}
\left\| \int_\Om s \, d\mu \right\| \leq \sum_{i=1}^n \mu(A_i) \|y_i\| = \int_\Om \| s \| \, d\mu.
\end{equation*}

The following theorem defines a sufficiently large class of integrable functions.

\begin{thm}\label{integrables}
 Let $f : \Om \rightarrow B$ be a measurable and separable map from a measure space $(\Om, \mathcal{F}, \mu)$ to a Banach space $B$.
\begin{itemize}
 \item[(a)]There exists a sequence of integrable simple maps $s_n : \Om \rightarrow B$ such that $\int_\Om \|s_n - f\| \, d\mu \rightarrow 0$ if and only if $\int_\Om \|f\| \, d\mu$ is finite.
  \item[(b)]For every sequence like in part (a), the sequence of the $\int_\Om s_n \, d\mu$'s converges in $B$.
  \item[(c)]The limit in part (b) is the same for every two sequences like in part (a).
\end{itemize}
\end{thm}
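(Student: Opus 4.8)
The plan is to handle the three parts in order; part~(a) carries essentially all the work, while (b) and (c) are soft consequences of the inequality $\|\int_\Om s\, d\mu\| \leq \int_\Om \|s\|\, d\mu$ for integrable simple maps recorded just above the theorem.

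For the ``only if'' half of~(a), suppose integrable simple maps $s_n$ with $\int_\Om \|s_n - f\|\, d\mu \to 0$ are given. Since $f$ is measurable and the norm is continuous, $\|f\|$ is measurable, and the pointwise triangle inequality $\|f\| \leq \|s_n - f\| + \|s_n\|$ gives, after integrating,
$$
\int_\Om \|f\|\, d\mu \leq \int_\Om \|s_n - f\|\, d\mu + \int_\Om \|s_n\|\, d\mu .
$$
Fix any $n$ large enough that the first term on the right is finite (such $n$ exists because that term tends to $0$); the second term is finite because $s_n$ is an integrable simple map. Hence $\int_\Om \|f\|\, d\mu < \infty$. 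For the ``if'' half, assume $\int_\Om \|f\|\, d\mu < \infty$. By Lemma~\ref{equivalence1}, since $f$ is measurable and separable we may choose simple maps $s_n$ converging to $f$ pointwise almost everywhere and satisfying $\|s_n(\om)\| \leq 2\|f(\om)\|$ almost everywhere. The first task is to check these $s_n$ are integrable: writing $s_n$ in canonical form $\sum_i \indicc{A_i} b_i$ with the $b_i$ nonzero and distinct, every $\om \in A_i$ satisfies $\|f(\om)\| \geq \tfrac12\|b_i\|$ outside a null set, so by Markov's inequality $\mu(A_i) \leq \tfrac{2}{\|b_i\|}\int_\Om \|f\|\, d\mu < \infty$. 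The second task is to show $\int_\Om \|s_n - f\|\, d\mu \to 0$: the functions $\|s_n - f\|$ are measurable, tend to $0$ pointwise almost everywhere, and are dominated by the integrable function $\|s_n\| + \|f\| \leq 3\|f\|$, so the scalar dominated convergence theorem applies.

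Parts~(b) and~(c) follow quickly. For~(b), given any sequence $s_n$ as in~(a), linearity of the integral on integrable simple maps together with the norm estimate gives
$$
\left\| \int_\Om s_n\, d\mu - \int_\Om s_m\, d\mu \right\| \leq \int_\Om \|s_n - s_m\|\, d\mu \leq \int_\Om \|s_n - f\|\, d\mu + \int_\Om \|s_m - f\|\, d\mu ,
$$
which tends to $0$ as $m, n \to \infty$; hence $\bigl(\int_\Om s_n\, d\mu\bigr)_n$ is Cauchy in the complete space $B$ and converges. For~(c), if $s_n$ and $t_n$ are two sequences as in~(a), the interleaved sequence $s_1, t_1, s_2, t_2, \dots$ is again a sequence of integrable simple maps with $\int_\Om \|\cdot - f\|\, d\mu \to 0$, so by~(b) its integrals converge; the subsequences of integrals coming from the $s_n$'s and from the $t_n$'s must then share that common limit. (Alternatively, bound $\|\int_\Om s_n\, d\mu - \int_\Om t_n\, d\mu\|$ directly by $\int_\Om \|s_n - f\|\, d\mu + \int_\Om \|f - t_n\|\, d\mu \to 0$.)

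The only genuinely delicate point is the integrability of the approximating simple maps in the ``if'' direction of~(a): it is precisely there that the domination bound $\|s_n\| \leq 2\|f\|$ supplied by Lemma~\ref{equivalence1}, and not merely pointwise convergence, is needed, via Markov's inequality; the same bound is what makes the scalar dominated convergence argument available.
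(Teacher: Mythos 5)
Your proof is correct and follows essentially the same route as the paper: the triangle inequality for the ``only if'' direction, Lemma~\ref{equivalence1} plus scalar dominated convergence for the ``if'' direction, and the Cauchy/difference estimates for (b) and (c). You even add a detail the paper's proof glosses over, namely the Markov-inequality check that the approximating simple maps are integrable in the sense of having finite-measure level sets.
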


\begin{proof}
 Let us prove (a). If there is a sequence $s_n$ like in the statement, then
$$
\int_\Om \|f\| \, d\mu \leq \int_\Om \|s_n - f\| \, d\mu + \int_\Om \| s_n \| \, d\mu
$$
and hence the left-hand side of the inequality has to be finite. Conversely, assume that $f$ is as the statement says. By Lemma~\ref{equivalence1}, we know that there exists a sequence $\{s_n\}_{n \in \N}$ of simple measurable maps that converges almost everywhere pointwise to $f$ and $\|s_n (\om)\| \leq 2\|f(\om)\|$ almost everywhere. By the dominated convergence theorem and the assumption that $\|f\|$ is integrable, we obtain
$$
\int_\Om \|s_n - f\| \, d\mu \rightarrow 0.
$$

Observe that if $\{s_n\}_{n \in \N}$ is a sequence like in part (a), then the sequence of the $\int_\Om s_n \, d\mu$'s is a Cauchy sequence. Since $B$ is complete, it converges and part (b) is proved.

Finally, if $\{s_n\}_{n \in \N}$ and $\{t_n\}_{n \in \N}$ are two sequences like in part (a), then
$$
\left\| \int_\Om (s_n - t_n) \, d\mu\right\| \leq \int_\Om \| s_n - f \| \,d\mu + \int_\Om \| t_n - f \|\, d\mu.
$$
Since the right-hand side converges to 0, part (c) is proved.
\end{proof}

\begin{defn}
 Let $(\Om, \mathcal{F}, \mu)$ be a measure space and $B$ a Banach space. In view of Theorem~\ref{integrables}, a map $f : \Om \rightarrow B$ is \emph{Bochner integrable}\index{Bochner integrable map} if $f$ is measurable, separable and $\int_\Om \|f\| \, d\mu$ is finite. Its integral is defined by
$$
\int_\Om f \, d\mu := \lim_{n \to \infty} \int_\Om s_n \, d\mu,
$$
where $\{s_n\}_{n \in \N}$ is any sequence of simple integrable maps such that $\int_\Om \|s_n -f\| \, d\mu$ converges to 0. We also define $\mathcal{L}^1_B(\mu)$\index{L one@$\mathcal{L}^1_B(\mu)$} to be the set of all the Bochner integrable maps from $\Om$ to $B$ with respect to the measure $\mu$.
\end{defn}

It will be convenient for the subsequent discussion to make the following definition.

\begin{defn}
 Let $(\Om, \mathcal{F}, \mu)$ be a measure space and $B$ a Banach space. If $f : \Om \rightarrow B$ is a Bochner integrable map, then a pair $(\{s_n\}_{n \in \N}, N)$ is a \emph{defining sequence for (the integral of)}\index{defining sequence} $f$ if $N$ is a null subset of $\Om$ and $\{s_n\}_{n \in \N}$ is a sequence of simple maps fulfilling three conditions. Firstly, 
$$
\lim_{n \to \infty} \int_\Om s_n \, d\mu = \int_\Om f \, d\mu.
$$
Secondly, the sequence converges pointwise to $f$ on $N^c$ and thirdly $\|s_n(\om)\| \leq 2 \|f(\om)\|$ on $N^c$.
\end{defn}
The definition of a Bochner integrable map and Theorem~\ref{integrables} say that every integrable map admits a defining sequence. Before ending this section, we give an efficient way to work with the Bochner integral in the case where the measurable space is a locally compact group and the functions to integrate are continuous with compact support.

\begin{rmq}\label{integrale_C_c(G):rmk}
Let $G$ be a locally compact group endowed with its left Haar measure $\lambda$ and $B$ a Banach space. Assume that $f$ is a map belonging to $C_c(G, B)$ and let $K$ be its compact support. For $b$ in $B$, let $B(b, 1/n)$ be the open ball of radius $1/n$ around $b$. By compactness, for every integer $n$ we can find $g_1^n, \dots, g_{m_n}^n$  in $K$ such that
$$
K \subseteq B^n_1 \cup \cdots \cup B^n_m,
$$ 
where $B^n_i = \inv{f}(B(f(g^n_i), 1/n))$ for every $i$ in $\{1, \dots, m_n\}$. Define
$$
A^n_1 := B^n_1 \quad \text{ and } \quad A^n_i := B^n_i \backslash \bigcup_{1 \leq j < i} A^n_j, \quad (n \in \N, 2 \leq i \leq m_n).
$$
Notice that these sets are measurable because $f$ is continuous. If for every integer $n$ we define a simple map $s_n : G \rightarrow B$ by
$$
s_n := \sum_{i=1}^{m_n} \indicc{A^n_i} f(g^n_i),
$$
then the $s_n$'s converge pointwise to $f$. This shows that $f$ is both separable and measurable. Since the $s_n$'s are norm dominated by the $L^1$ function $g \mapsto \indicc{K}(g) \sup_{g \in K}\|f(g)\|$, the dominated convergence theorem as well as parts (b) and (c) of Theorem~\ref{integrables} can be used to obtain the integrability of $f$ and the equality
\begin{equation}\label{eq_rmq_int}
\int_G f \, d\lambda = \lim_{n \to \infty} \sum_{i = 1 }^{m_n} \lambda (A^n_i) f(g^n_i).
\end{equation}
\end{rmq}

\section{Properties of the Bochner integral}

In this section, it will be established in a row and without further comments four results about the Bochner integral. The first one is a collection of facts about Bochner integrable maps. The second one says that the integral commutes with bounded linear maps. The third and fourth ones are the dominated convergence theorem and the Fubini's theorem for the Bochner integral.

\begin{thm}\label{basic_properties_integral:thm}
Let $(\Om, \mathcal{F}, \mu)$ be a measure space and $B$ a Banach space.
 \begin{itemize}
  \item[(a)]The set $\mathcal{L}^1_B(\mu)$ of Bochner integrable maps from $\Om$ to $B$ is a vector space over $\R$ and $\|f \| := \int_\Om \|f\| \, d\mu$ is a semi-norm on $\mathcal{L}^1_B(\mu)$.
  \item[(b)]For every $f$ in $\mathcal{L}^1_B(\mu)$, the inequality $\left\|\int_\Om f \, d\mu \right\| \leq \int_\Om \|f\| \, d\mu$ holds.
  \item[(c)]The Bochner integral is a bounded linear map from $\mathcal{L}^1_B(\mu)$ to $B$ with respect to the semi-norm defined in (a).
 \end{itemize}
\end{thm}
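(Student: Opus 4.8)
The plan is to treat the three items in turn, leaning on what has already been established about the space $\mathcal{L}^0$ of measurable and separable maps and on the behaviour of defining sequences.

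For part (a), I would first recall from Lemma~\ref{lemme1:30.11} that $\mathcal{L}^0$ is a vector subspace of $B^\Om$, so that measurability and separability are automatically inherited by linear combinations; it then only remains to check the finiteness of $\int_\Om \|f\|\,d\mu$ for a linear combination, and this follows from the pointwise estimates $\|f+g\| \leq \|f\| + \|g\|$ and $\|\lambda f\| = |\lambda|\,\|f\|$ in $B$ together with the monotonicity and homogeneity of the scalar integral. (One should note in passing that $\om \mapsto \|f(\om)\|$ is genuinely $\mu$-measurable, being the composition of the measurable map $f$ with the continuous function $\|\cdot\|$.) The same pointwise inequalities, fed into the scalar integral, immediately give that $\|f\| := \int_\Om \|f\|\,d\mu$ is non-negative, absolutely homogeneous and subadditive, hence a semi-norm; it is genuinely only a semi-norm because maps vanishing $\mu$-almost everywhere have vanishing integral of the norm.

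For part (b), fix $f$ in $\mathcal{L}^1_B(\mu)$ and a sequence of integrable simple maps $\{s_n\}_{n \in \N}$ with $\int_\Om \|s_n - f\|\,d\mu \to 0$, which exists by Theorem~\ref{integrables}. For each $n$ we already have $\left\| \int_\Om s_n\,d\mu \right\| \leq \int_\Om \|s_n\|\,d\mu$ for integrable simple maps, while the reverse triangle inequality in $B$ gives $\bigl|\,\|s_n(\om)\| - \|f(\om)\|\,\bigr| \leq \|s_n(\om) - f(\om)\|$ pointwise, whence $\int_\Om \|s_n\|\,d\mu \to \int_\Om \|f\|\,d\mu$. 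Since $\int_\Om s_n\,d\mu \to \int_\Om f\,d\mu$ in $B$ by the very definition of the Bochner integral and the norm of $B$ is continuous, passing to the limit in the displayed inequality yields the claim.

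For part (c), additivity of the integral is obtained by choosing sequences $\{s_n\}_{n \in \N}$ and $\{t_n\}_{n \in \N}$ as above for $f$ and $g$ respectively and observing that $\{s_n + t_n\}_{n \in \N}$ again satisfies the hypothesis of Theorem~\ref{integrables}(a) for $f+g$, because $\int_\Om \|(s_n+t_n)-(f+g)\|\,d\mu \leq \int_\Om \|s_n - f\|\,d\mu + \int_\Om \|t_n - g\|\,d\mu \to 0$; then Theorem~\ref{integrables}(c), together with additivity of the integral on simple maps, gives $\int_\Om (f+g)\,d\mu = \int_\Om f\,d\mu + \int_\Om g\,d\mu$, and the analogous scheme handles scalar multiples. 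Finally, boundedness with operator norm at most $1$ is precisely part (b), now read as $\left\| \int_\Om f\,d\mu \right\| \leq \|f\|$ for the semi-norm of part (a). The only points requiring a little care throughout are the interchanges of limits with the norm of $B$ and with the scalar integral, and the measurability of $\om \mapsto \|f(\om)\|$; none of these is a genuine obstacle, so I do not expect a hard part here.
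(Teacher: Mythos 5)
Your proposal is correct and follows essentially the same route as the paper: Lemma~\ref{lemme1:30.11} plus the pointwise norm inequalities for part (a), passage to the limit along an approximating sequence of integrable simple maps for part (b), and summing defining sequences together with Theorem~\ref{integrables} for the linearity in part (c). The only cosmetic difference is that in (b) you derive the full convergence $\int_\Om \|s_n\|\,d\mu \to \int_\Om \|f\|\,d\mu$ from the reverse triangle inequality, whereas the paper only needs the one-sided bound $\int_\Om \|s_n\|\,d\mu \leq \int_\Om \|s_n - f\|\,d\mu + \int_\Om \|f\|\,d\mu$; both work.
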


\begin{proof}
 Let $f, g : \Om \rightarrow B$ be measurable and separable maps. We already know from Lemma~\ref{lemme1:30.11} that the linear combination $\alpha f + \beta g$ is measurable and separable for every real numbers $\alpha$ and $\beta$. If moreover $\|f\|$ and $\|g\|$ are integrable, then so is $\|\alpha f + \beta g\|$. Therefore, linear combinations of integrable maps are integrable. The statement about the semi-norm being clear, part (a) is now over.

In order to prove part (b), let $\{s_n\}_{n \in \N}$ be a sequence as in Theorem~\ref{integrables}. We have
$$
\left\| \int_\Om f \, d\mu \right\| = \lim_{n \to \infty} \left\| \int_\Om s_n \, d\mu \right\| \leq \lim_{n \to \infty} \int_\Om \|s_n\| \, d\mu  \leq \int_\Om \|f\|\, d\mu,
$$
where the last inequality comes from $\|s_n (\om)\| \leq \|s_n (\om)-f(\om)\|+\|f(\om)\|$ and from the choice of the $s_n$'s.

Finally, part (c) is almost obvious. First, we observe that the integral is linear on simple integrable maps. From this observation, we obtain the linearity for general integrable maps. The definition of the semi-norm on $\mathcal{L}^1_B(\mu)$ turns the integral into a norm-one linear map.
\end{proof}

\begin{thm}\label{commute_lineaires}
 Let $f$ be a map from a measure space $(\Om, \mathcal{F}, \mu)$ to a Banach space $B$. Let $C$ be another Banach space and $F : B \rightarrow C$ a continuous linear map. If $f$ is Bochner integrable, then so is $F \circ f$ and
\begin{equation}\label{int_commut}
F\left( \int_\Om f \, d\mu \right) = \int_\Om F \circ f \, d\mu.
\end{equation}
\end{thm}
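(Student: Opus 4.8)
The plan is to reduce the statement to the case of simple maps and then pass to the limit. First I would record the elementary case: if $s = \sum_{i=1}^n \indicc{A_i} b_i$ is a simple integrable map, then $F \circ s = \sum_{i=1}^n \indicc{A_i} F(b_i)$ is again a simple integrable map (the sets $A_i$ and hence the finite measures $\mu(A_i)$ are unchanged), and the linearity of $F$ gives at once
$$
F\left( \int_\Om s \, d\mu \right) = F\left( \sum_{i=1}^n \mu(A_i) b_i \right) = \sum_{i=1}^n \mu(A_i) F(b_i) = \int_\Om F \circ s \, d\mu.
$$

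Next I would check that $F \circ f$ belongs to the class of Bochner integrable maps. Measurability of $F \circ f$ is immediate, being the composition of the measurable map $f$ with the continuous (hence Borel) map $F$; and $F \circ f$ is separable because, if $N$ and a countable $Y \subseteq B$ witness the separability of $f$, then continuity of $F$ gives $F\bigl(f(\Om \setminus N)\bigr) \subseteq \overline{F(Y)}$ with $F(Y)$ countable, so $N$ and $F(Y)$ witness the separability of $F \circ f$. (Alternatively, if $s_n \to f$ pointwise a.e.\ with $s_n$ simple, then $F \circ s_n \to F \circ f$ pointwise a.e., and one invokes Lemma~\ref{lemma1:29.11}.) Integrability follows from the operator norm estimate $\|F(f(\om))\| \leq \|F\|\,\|f(\om)\|$, which yields $\int_\Om \|F \circ f\| \, d\mu \leq \|F\| \int_\Om \|f\| \, d\mu < \infty$; so $F \circ f$ is Bochner integrable by Theorem~\ref{integrables}.

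Finally, for the identity~\eqref{int_commut} itself, I would take a sequence $\{s_n\}_{n \in \N}$ of simple integrable maps with $\int_\Om \|s_n - f\| \, d\mu \to 0$, as furnished by Theorem~\ref{integrables}(a). Then $\{F \circ s_n\}_{n \in \N}$ is a sequence of simple integrable maps with
$$
\int_\Om \|F \circ s_n - F \circ f\| \, d\mu = \int_\Om \|F(s_n - f)\| \, d\mu \leq \|F\| \int_\Om \|s_n - f\| \, d\mu \longrightarrow 0,
$$
so it is an admissible approximating sequence for the Bochner integral of $F \circ f$. Combining the simple-map computation above, the continuity of $F$, and the definition of the Bochner integral,
$$
\int_\Om F \circ f \, d\mu = \lim_{n \to \infty} \int_\Om F \circ s_n \, d\mu = \lim_{n \to \infty} F\left( \int_\Om s_n \, d\mu \right) = F\left( \lim_{n \to \infty} \int_\Om s_n \, d\mu \right) = F\left( \int_\Om f \, d\mu \right).
$$
I do not expect any serious obstacle: the only points needing a little care are the verification that $F \circ f$ is again measurable and separable and the legitimacy of $\{F \circ s_n\}$ as an approximating sequence, both of which follow directly from the linearity and continuity of $F$ together with the estimate $\|F \circ f\| \leq \|F\|\,\|f\|$.
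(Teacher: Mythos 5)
Your proposal is correct and follows essentially the same route as the paper: verify the identity on simple maps, establish measurability, separability and integrability of $F \circ f$ via the estimate $\|F \circ f\| \leq \|F\|\,\|f\|$, and pass to the limit along an approximating sequence using the continuity of $F$. Your direct argument for separability (via $F(Y)$) is a harmless variant of the paper's appeal to Lemma~\ref{equivalence1}, and the rest matches the paper's proof step for step.
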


\begin{proof}
Assume that $f$ is Bochner integrable. Let us first show that $F \circ f$ is measurable and separable. By Lemma~\ref{equivalence1}, it is enough to show that there exists a sequence of simple maps converging pointwise to $F \circ f$ almost everywhere. Since $f$ is measurable and separable, let $\{s_n\}_{n \in \N}$ be a sequence of simple maps given by Lemma~\ref{equivalence1}. Our assumptions imply that the $F \circ s_n$'s form a sequence of simple maps converging pointwise to $F \circ f$ almost everywhere and, by Lemma~\ref{equivalence1}, this in turn implies measurability and separability of $F \circ f$. Since $\| F \circ f (\om)\| \leq \|F\| \|f(\om)\|$ for every $\om$ in $\Om$,
$$
\int_\Om \|F \circ f\| \, d\mu \leq \|F\| \int_\Om \| f \| \, d\mu.
$$
This inequality, the integrability of $f$ and the fact that $F \circ f$ is measurable and separable show that $F \circ f$ is Bochner integrable.

The equation~\eqref{int_commut} holds whenever $f$ is a simple map. Using the definition of the integral for general integrable maps and the continuity of $F$, we see that it holds for every integrable map $f$.
\end{proof}

\begin{thm}[Dominated convergence\index{dominated convergence}]\label{convergence_dominee}
 Let $\{f_n\}_{n \in \N}$ be a sequence of measurable and separable maps from a measure space $(\Om, \mathcal{F}, \mu)$ to a Banach space $B$ converging pointwise to a map $f$ almost everywhere. If there exists a Lebesgue integrable function $g : \Om \rightarrow \R_+ \cup \{\infty\}$ dominating in norm the $f_n$'s almost everywhere, then all of the $f_n$'s and $f$ are Bochner integrable and
\begin{equation}\label{eq2:30.11}
 \lim_{n \to \infty} \int_\Om f_n \, d\mu = \int_\Om f \, d\mu.
\end{equation}
\end{thm}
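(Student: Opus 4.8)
The plan is to reduce the entire statement to the classical dominated convergence theorem for $\R$-valued functions, applied to the functions $\om \mapsto \|f_n(\om) - f(\om)\|$, using the norm estimate of Theorem~\ref{basic_properties_integral:thm}(b) as the bridge from $B$ back to $\R$.

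First I would dispose of the integrability claims. Since the $f_n$'s are measurable and separable and converge pointwise to $f$ almost everywhere, Lemma~\ref{lemma1:29.11} immediately gives that $f$ is measurable and separable. The domination hypothesis provides, for each $n$, a null set off which $\|f_n(\om)\| \leq g(\om)$; integrating gives $\int_\Om \|f_n\| \, d\mu \leq \int_\Om g \, d\mu < \infty$, so every $f_n$ is Bochner integrable. Taking the union of the (null) exceptional sets of these bounds together with the null set governing pointwise convergence, and letting $n \to \infty$, yields $\|f(\om)\| \leq g(\om)$ almost everywhere, hence $f$ is Bochner integrable too.

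Now for the convergence~\eqref{eq2:30.11}. By Lemma~\ref{lemme1:30.11}(a) each difference $f_n - f$ is measurable and separable, and since $\|f_n - f\| \leq \|f_n\| + \|f\| \leq 2g$ almost everywhere with $2g$ integrable, each $f_n - f$ is Bochner integrable. Linearity of the Bochner integral and part (b) of Theorem~\ref{basic_properties_integral:thm} then give
$$
\left\| \int_\Om f_n \, d\mu - \int_\Om f \, d\mu \right\| = \left\| \int_\Om (f_n - f) \, d\mu \right\| \leq \int_\Om \|f_n - f\| \, d\mu,
$$
so it suffices to show the right-hand side tends to $0$. The functions $h_n := \|f_n - f\|$ are real-valued and measurable (the norm is continuous and $f_n - f$ is measurable), converge pointwise to $0$ almost everywhere, and satisfy $0 \leq h_n \leq 2g$ almost everywhere with $2g$ Lebesgue integrable; the scalar dominated convergence theorem then gives $\int_\Om h_n \, d\mu \to 0$, which finishes the argument.

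The main obstacle, such as it is, is purely bookkeeping: one must merge the countably many almost-everywhere statements (the pointwise convergence, the domination $\|f_n\| \leq g$ for each $n$, and the separability witnesses) into a single null set so that the pointwise bound on $f$ and the hypotheses of the scalar dominated convergence theorem are available simultaneously. There is no genuine analytic content beyond the real-valued case, since Theorem~\ref{basic_properties_integral:thm}(b) transfers the whole estimate to $\R$.
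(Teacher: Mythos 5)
Your proposal is correct and follows essentially the same route as the paper's proof: establish integrability of the $f_n$'s and of $f$ from the domination and from Lemma~\ref{lemma1:29.11}, then reduce~\eqref{eq2:30.11} to the scalar dominated convergence theorem via the estimate $\left\| \int_\Om f_n \, d\mu - \int_\Om f \, d\mu \right\| \leq \int_\Om \|f_n - f\| \, d\mu$ with dominating function $2g$. The only difference is that you spell out the null-set bookkeeping more explicitly, which the paper leaves implicit.
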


\begin{proof}
 Assume that there exists a function $g$ like in the statement of the theorem and let $N$ be a null subset of $\Om$ such that
$$
\|f_n (\om) \| \leq g(\om) \quad \text{ and } \quad f_n(\om) \rightarrow f(\om),
$$
for every $\om$ in $N^c$. Since $g$ is integrable, the domination condition implies that all of the $f_n$'s are Bochner integrable. By Lemma~\ref{lemma1:29.11}, $f$ is measurable and separable. Since $\|f\| \leq g$ almost everywhere, $f$ is integrable. The equation~\eqref{eq2:30.11} follows from the standard dominated convergence theorem because
$$
\left\| \int_\Om f_n \, d\mu -  \int_\Om f \, d\mu \right\| \leq \int_\Om \|f_n - f\| \, d\mu
$$
and $\|f_n - f\| \leq 2g$ almost everywhere.
\end{proof}

Before stating Fubini's Theorem for the Bochner integral, let us remind some useful facts about classic measure and integration theory.

\begin{lem}\label{lem_Fub1}
 Let $(\Om, \mathcal{F}, \mu)$ and $(\Phi, \mathcal{F}', \nu)$ be two $\sigma$-finite measure spaces. For any subset $A$ of $\Om \times \Phi$ and any pair $(\om, \phi)$ in $\Om \times \Phi$ define
$$
S_\Om(\phi, A):=\{\om \in \Om : (\om, \phi) \in A\} \quad \text{ and } \quad S_\Phi(\om, A):=\{\phi \in \Phi : (\om, \phi) \in A\}.
$$
If $A$ belongs to the product $\sigma$-field $\mathcal{F}\otimes\mathcal{F}'$, then for every $(\om, \phi)$ in $\Om \times \Phi$ the sets $S_\Om(\phi, A)$ and $S_\Phi(\om, A)$ are measurable. Moreover, the maps
$$
\begin{array}{lcl}
\Phi \longrightarrow \R_+ & \text{ and } & \Om \longrightarrow \R_+ \\
\phi \longmapsto \mu(S_\Om(\phi, A)) &  \quad &\om \longmapsto \nu(S_\Phi(\om, A))
\end{array}
$$
are measurable too.
\end{lem}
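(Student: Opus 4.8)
The plan is to prove the two assertions in turn, each by the standard ``good sets'' principle: exhibit a family of subsets of $\Om \times \Phi$ for which the desired conclusion holds, check that this family is a $\sigma$-algebra (or a $\lambda$-system) containing the measurable rectangles, and invoke the fact that the measurable rectangles generate $\mathcal{F} \otimes \mathcal{F}'$.

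First I would handle the measurability of the sections. Fix $\phi$ in $\Phi$ and let $\mathcal{G}$ be the family of all $A \subseteq \Om \times \Phi$ with $S_\Om(\phi, A) \in \mathcal{F}$. For a measurable rectangle $E \times F$ (with $E \in \mathcal{F}$, $F \in \mathcal{F}'$) one has $S_\Om(\phi, E \times F) = E$ when $\phi \in F$ and $S_\Om(\phi, E \times F) = \emptyset$ otherwise, so $\mathcal{G}$ contains every measurable rectangle. Because sectioning commutes with the set operations, $S_\Om(\phi, A^c) = \Om \setminus S_\Om(\phi, A)$ and $S_\Om(\phi, \bigcup_n A_n) = \bigcup_n S_\Om(\phi, A_n)$, the family $\mathcal{G}$ is closed under complementation and countable unions, hence is a $\sigma$-algebra; therefore $\mathcal{F} \otimes \mathcal{F}' \subseteq \mathcal{G}$. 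Since $\phi$ was arbitrary, all $\Om$-sections of sets in $\mathcal{F} \otimes \mathcal{F}'$ are measurable, and the argument for $S_\Phi(\om, A)$ is symmetric.

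Next, for the measurability of $\phi \mapsto \mu(S_\Om(\phi, A))$, I would first assume $\mu$ \emph{finite}. Let $\mathcal{D}$ be the set of those $A$ in $\mathcal{F} \otimes \mathcal{F}'$ for which $\phi \mapsto \mu(S_\Om(\phi, A))$ is $\mathcal{F}'$-measurable --- a well-posed definition thanks to the previous step. For a rectangle $E \times F$ this function equals $\mu(E)\,\indicc{F}$, which is measurable, so $\mathcal{D}$ contains the measurable rectangles, and these form a $\pi$-system generating $\mathcal{F} \otimes \mathcal{F}'$. Using $S_\Om(\phi, (\Om \times \Phi) \setminus A) = \Om \setminus S_\Om(\phi, A)$ together with the finiteness of $\mu$ (so that $\mu(\Om \setminus S) = \mu(\Om) - \mu(S)$) one checks stability under complements, and using countable additivity of $\mu$ together with the fact that a pointwise-convergent sum of measurable functions is measurable one checks stability under countable disjoint unions; hence $\mathcal{D}$ is a $\lambda$-system, and Dynkin's $\pi$--$\lambda$ theorem yields $\mathcal{D} = \mathcal{F} \otimes \mathcal{F}'$.

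Finally I would remove the finiteness assumption: write $\Om = \bigcup_n \Om_n$ with $\Om_n \in \mathcal{F}$ increasing and $\mu(\Om_n) < \infty$, and put $\mu_n(\cdot) := \mu(\cdot \cap \Om_n)$, a finite measure. For $A$ in $\mathcal{F} \otimes \mathcal{F}'$, continuity from below of $\mu$ gives $\mu(S_\Om(\phi, A)) = \lim_n \mu_n(S_\Om(\phi, A))$ pointwise in $\phi$, and each $\phi \mapsto \mu_n(S_\Om(\phi, A))$ is measurable by the finite case, so the pointwise limit is measurable too. Exchanging the roles of $\Om$ and $\Phi$ and using the $\sigma$-finiteness of $\nu$ handles $\om \mapsto \nu(S_\Phi(\om, A))$. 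The only genuinely delicate point in the whole argument is the $\lambda$-system verification in the finite case, where finiteness of the measure is really needed to deal with complements; everything else is bookkeeping with the identities relating sections to unions and complements.
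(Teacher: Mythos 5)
Your proof is correct. Note, however, that the paper itself offers no proof of Lemma~\ref{lem_Fub1}: it is stated (together with the classic Fubini Theorem~\ref{classic_Fubini}) purely as a reminder of standard facts from scalar measure theory, to be quoted later in the proofs of Lemma~\ref{point(a)} and Lemma~\ref{lem_Fub3}. What you have written is the standard textbook argument that the paper implicitly relies on: the ``good sets'' $\sigma$-algebra argument for measurability of sections, then Dynkin's $\pi$--$\lambda$ theorem for the measurability of $\phi \mapsto \mu(S_\Om(\phi, A))$ in the finite case, and exhaustion by the sets $\Om_n$ of finite measure for the $\sigma$-finite case. All the steps check out: the rectangles do form a $\pi$-system generating $\mathcal{F}\otimes\mathcal{F}'$ and contain the whole space $\Om\times\Phi$; your class $\mathcal{D}$ is closed under complements precisely because $\mu$ is assumed finite there, and closed under countable disjoint unions because the partial sums $\sum_{k\le n}\mu(S_\Om(\phi,A_k))$ are measurable and converge pointwise; and continuity from below of $\mu$ legitimately passes the conclusion from each finite measure $\mu_n(\cdot)=\mu(\cdot\cap\Om_n)$ to $\mu$. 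You also correctly isolate the one genuinely delicate point, namely that finiteness of the measure is indispensable for the complement step, which is exactly why the lemma is stated only for $\sigma$-finite spaces.
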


\begin{thm}[Classic Fubini's Theorem]\label{classic_Fubini}
 Let $(\Om, \mathcal{F}, \mu)$ and $(\Phi, \mathcal{F}', \nu)$ be two $\sigma$-finite measure spaces. If $f : \Om \times \Phi \rightarrow \R$ belongs to $\mathcal{L}^1_\R(\mu \otimes \nu)$, then the following holds.
\begin{itemize}
  \item[(a)]The map $f(\om, \cdot)$ belongs to $\mathcal{L}^1_\R(\nu)$ for almost every $\om$ in $\Om$ and therefore we can almost everywhere define a map $If : \Om \rightarrow \R$ by declaring that $If(\om) = \int_\Phi f(\om, \cdot) \, d\nu$.
  \item[(b)]The map $If$ almost everywhere defined in part (a) belongs to $L^1_\R(\mu)$.
  \item[(c)]We have
$$
\int_{\Om \times \Phi} f \, d(\mu\otimes\nu) = \int_\Om If \, d\mu = \int_\Om \left( \int_\Phi f(\om, \phi) \, d\nu(\phi)\right) \, d\mu(\om).
$$
  \item[(d)]The three preceding parts remain true if we switch the roles of $\Om$ and $\Phi$.
\end{itemize}
\end{thm}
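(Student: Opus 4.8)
The plan is to prove the result by the standard ladder of increasing generality: first for indicator functions of sets in $\mathcal{F}\otimes\mathcal{F}'$, then for nonnegative simple functions, then for nonnegative measurable functions via monotone convergence, and finally for arbitrary $f\in\mathcal{L}^1_\R(\mu\otimes\nu)$ by splitting into positive and negative parts. Throughout, parts (a) and (b) are harvested from the same computations that give part (c), and part (d) needs no separate argument because the indicator case will be proved symmetrically in $\Om$ and $\Phi$ at once.

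First I would reduce to the case where $\mu$ and $\nu$ are finite measures. Using $\sigma$-finiteness, write $\Om=\bigcup_k\Om_k$ and $\Phi=\bigcup_\ell\Phi_\ell$ as increasing unions of sets of finite measure; since, for nonnegative $f$, the three quantities in part (c) are monotone in $k$ and $\ell$, it suffices to prove the identity on each $\Om_k\times\Phi_\ell$ and pass to the limit by monotone convergence. So assume $\mu(\Om),\nu(\Phi)<\infty$. Next, for $A\in\mathcal{F}\otimes\mathcal{F}'$ I would establish
$$
(\mu\otimes\nu)(A)=\int_\Om\nu(S_\Phi(\om,A))\,d\mu(\om)=\int_\Phi\mu(S_\Om(\phi,A))\,d\nu(\phi),
$$
the integrands being measurable by Lemma~\ref{lem_Fub1}. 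The collection $\mathcal{D}$ of sets $A$ for which this double identity holds contains every measurable rectangle $E\times F$ (both sides then equal $\mu(E)\nu(F)$) and is a Dynkin system: closure under complements uses finiteness of the total measures, and closure under countable increasing unions uses monotone convergence for the real integrals. As the rectangles form a $\pi$-system generating $\mathcal{F}\otimes\mathcal{F}'$, the $\pi$--$\lambda$ theorem gives $\mathcal{D}=\mathcal{F}\otimes\mathcal{F}'$. This is exactly part (c) for $f=\indicc{A}$, and parts (a), (b) in that case are immediate from Lemma~\ref{lem_Fub1}.

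By linearity this extends to nonnegative simple functions. For a general nonnegative measurable $f$, choose nonnegative simple functions $s_n\uparrow f$; monotone convergence on $\Phi$ shows that $If(\om)=\lim_n\int_\Phi s_n(\om,\cdot)\,d\nu$ for every $\om$ as an element of $[0,\infty]$, hence $If$ is measurable, and a second application of monotone convergence on $\Om$ combined with the simple case yields
$$
\int_{\Om\times\Phi}f\,d(\mu\otimes\nu)=\lim_n\int_\Om\Bigl(\int_\Phi s_n(\om,\cdot)\,d\nu\Bigr)d\mu=\int_\Om If\,d\mu.
$$
When $f\in\mathcal{L}^1_\R(\mu\otimes\nu)$ this forces $\int_\Om If\,d\mu<\infty$, so $If(\om)<\infty$ for $\mu$-almost every $\om$, i.e. $f(\om,\cdot)\in\mathcal{L}^1_\R(\nu)$ a.e.\ (part (a) for $f\geq 0$) and $If\in L^1_\R(\mu)$ (part (b)). Finally, for arbitrary $f\in\mathcal{L}^1_\R(\mu\otimes\nu)$ write $f=f^+-f^-$ with $f^\pm\geq 0$ integrable; applying the nonnegative case to each, $f^\pm(\om,\cdot)$ is $\nu$-integrable for $\mu$-a.e.\ $\om$, so on the common full-measure set where both $If^+$ and $If^-$ are finite we may set $If(\om)=If^+(\om)-If^-(\om)$, obtaining parts (a) and (b); subtracting the two identities of part (c) for $f^\pm$ gives part (c) for $f$. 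Part (d) is the same argument with $\Om$ and $\Phi$ interchanged, and is in fact already contained in the symmetric Dynkin-system step.

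The main obstacle is the bookkeeping with null sets and with the $[0,\infty]$-valued intermediate functions: one must check that $If$ as defined in part (a) is genuinely measurable, that the a.e.-defined subtraction $If^+-If^-$ is legitimate (finiteness $\mu$-a.e.), and that the reduction to finite measures is carried out so that the monotone limits of all three quantities in part (c) line up. Everything else is routine once the indicator case is secured via the $\pi$--$\lambda$ theorem.
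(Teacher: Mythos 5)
Your proof is correct, but note that the paper does not actually prove Theorem~\ref{classic_Fubini} at all: it is stated without proof as a reminder of classical measure theory (together with Lemma~\ref{lem_Fub1}), and only the Bochner-valued Fubini Theorem~\ref{Fubini} is proved, by reducing it to this classical statement. So there is nothing in the paper to compare your argument against step by step; what you have written is the standard proof that the author chose to omit. Your route --- reduce to finite measures by $\sigma$-finiteness, prove the identity $(\mu\otimes\nu)(A)=\int_\Om\nu(S_\Phi(\om,A))\,d\mu(\om)=\int_\Phi\mu(S_\Om(\phi,A))\,d\nu(\phi)$ for all $A$ in $\mathcal{F}\otimes\mathcal{F}'$ via the $\pi$--$\lambda$ theorem, extend to nonnegative simple functions by linearity, to nonnegative measurable functions by monotone convergence (the Tonelli step), and finally to integrable $f$ by the decomposition $f=f^+-f^-$ --- is the canonical one, and you handle the two genuinely delicate points correctly: the measurability of the $[0,\infty]$-valued function $If$ as a pointwise limit of the measurable functions $\om\mapsto\int_\Phi s_n(\om,\cdot)\,d\nu$ supplied by the indicator case, and the legitimacy of the almost-everywhere subtraction $If^+-If^-$ on the full-measure set where both are finite, which is exactly what delivers part (a). One small remark: since the target space is $\R$, the separability requirement in the paper's definition of $\mathcal{L}^1_\R$ is automatic, so your identification of $\mathcal{L}^1_\R(\mu\otimes\nu)$ with ordinary Lebesgue integrability, implicit throughout, is harmless. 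In short, the proposal fills a gap the paper deliberately left open, and does so correctly.
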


Here is Fubini's Theorem for the Bochner integral.

\begin{thm}[Fubini's Theorem\index{Fubini's Theorem}]\label{Fubini}
 Let $(\Om, \mathcal{F}, \mu)$ and $(\Phi, \mathcal{F}', \nu)$ be two $\sigma$-finite measure spaces. If $f$ belongs to $\mathcal{L}^1_B(\mu \otimes \nu)$, where $B$ is a Banach space and $\mu \otimes \nu$ the product measure on $\Om \times \Phi$, then the following statements hold.
\begin{itemize}
  \item[(a)]For almost every $\om $ in $ \Om$, the map $f(\om, \cdot)$ is $\nu$-Bochner integrable.
  \item[(b)]For almost every $\phi $ in $ \Phi$, the map $f( \cdot , \phi)$ is $\mu$-Bochner integrable.
  \item[(c)]We have the formula
\begin{eqnarray*}
 \int_{\Om \times \Phi} f \, d(\mu\otimes\nu) &=& \int_\Om \left(\int_\Phi f(\om, \phi) \, d\mu(\om) \right)\, d\nu(\phi) \\
					  &=&	\int_\Phi \left(\int_\Om f(\om, \phi) \, d\nu(\phi) \right)\, d\mu(\om).
\end{eqnarray*}
\end{itemize}
\end{thm}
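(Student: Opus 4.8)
The plan is to deduce everything from the classic Fubini theorem (Theorem~\ref{classic_Fubini}) by applying it to two kinds of scalar functions --- the norm $\|f\|$, and the indicators of the measurable rectangles building a simple approximation of $f$ --- and then to transfer the $B$-valued conclusions by feeding these scalar facts into the Bochner dominated convergence theorem (Theorem~\ref{convergence_dominee}) together with the section-measurability of Lemma~\ref{lem_Fub1}.

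Concretely, I would start by fixing a defining sequence $(\{s_n\}_{n\in\N}, N)$ for $f$, writing $s_n = \sum_i \indicc{A_i^n} b_i^n$ with $(\mu\otimes\nu)(A_i^n) < \infty$, and noting that, in the notation of Lemma~\ref{lem_Fub1}, the $\om$-section $s_n(\om,\cdot) = \sum_i \indicc{S_\Phi(\om, A_i^n)} b_i^n$ is a simple map $\Phi\to B$, its defining sets being measurable by that lemma. Since $\|f\|\in\mathcal{L}^1_\R(\mu\otimes\nu)$, Theorem~\ref{classic_Fubini} gives that $g_1(\om):=\int_\Phi\|f(\om,\phi)\|\,d\nu(\phi)$ is defined for $\mu$-a.e.\ $\om$ and lies in $L^1_\R(\mu)$; applying the same theorem to $\indicc{N}$ shows that $S_\Phi(\om,N)$ is $\nu$-null for $\mu$-a.e.\ $\om$. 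For such an $\om$ the maps $s_n(\om,\cdot)$ converge $\nu$-a.e.\ pointwise to $f(\om,\cdot)$, so $f(\om,\cdot)$ is $\nu$-measurable; Lemma~\ref{lemma1:29.11}, applied to the simple (hence measurable and separable) maps $s_n(\om,\cdot)$, makes $f(\om,\cdot)$ measurable and separable, and since its norm is $\nu$-integrable we get $f(\om,\cdot)\in\mathcal{L}^1_B(\nu)$. This is part~(a); part~(b) follows by exchanging the roles of $\Om$ and $\Phi$ and invoking part~(d) of Theorem~\ref{classic_Fubini}.

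For part~(c) I would set $g(\om):=\int_\Phi f(\om,\phi)\,d\nu(\phi)$, defined $\mu$-a.e.\ by part~(a), and $h_n(\om):=\int_\Phi s_n(\om,\phi)\,d\nu(\phi)=\sum_i \nu(S_\Phi(\om,A_i^n))\,b_i^n$, defined $\mu$-a.e.\ and lying in $\mathcal{L}^0$ by Lemmas~\ref{lem_Fub1} and~\ref{lemme1:30.11}. The defining-sequence bound $\|s_n(\om,\cdot)\|\le 2\|f(\om,\cdot)\|$ holds $\nu$-a.e.\ for $\mu$-a.e.\ $\om$, so Theorem~\ref{convergence_dominee} on $\Phi$ gives $h_n(\om)\to g(\om)$ for $\mu$-a.e.\ $\om$; then Lemma~\ref{lemma1:29.11} makes $g$ measurable and separable, and $\|g\|\le g_1\in L^1_\R(\mu)$ shows $g\in\mathcal{L}^1_B(\mu)$. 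Using $\|h_n\|\le 2g_1$ and Theorem~\ref{convergence_dominee} once more on $\Om$ yields $\int_\Om h_n\,d\mu \to \int_\Om g\,d\mu$. On the other hand, classic Fubini applied to each $\indicc{A_i^n}$ gives $\int_\Om h_n\,d\mu = \sum_i (\mu\otimes\nu)(A_i^n)\,b_i^n = \int_{\Om\times\Phi} s_n\,d(\mu\otimes\nu)$, and the latter converges to $\int_{\Om\times\Phi} f\,d(\mu\otimes\nu)$ because $(\{s_n\},N)$ is a defining sequence. Comparing the two limits gives $\int_\Om\bigl(\int_\Phi f(\om,\phi)\,d\nu(\phi)\bigr)\,d\mu(\om)=\int_{\Om\times\Phi} f\,d(\mu\otimes\nu)$, and the remaining iterated integral is handled by the symmetric argument. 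Alternatively, one could test both sides against arbitrary $\lambda\in B^*$, using Theorem~\ref{commute_lineaires} and the scalar Fubini theorem, and conclude by Hahn--Banach.

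I do not expect a single deep obstacle --- the result is essentially a bookkeeping exercise --- but the points deserving genuine care are measure-theoretic: verifying that all the exceptional sets ($S_\Phi(\om,N)$, the $\om$ where some $\nu(S_\Phi(\om,A_i^n))$ is infinite, the $\om$ where $f(\om,\cdot)$ fails to be integrable) are $\mu$-null, so that $g$, $g_1$ and $h_n$ are legitimately defined and integrable; checking that the simple-map identity $\int_{\Om\times\Phi} s_n\,d(\mu\otimes\nu)=\int_\Om h_n\,d\mu$ is precisely what Theorem~\ref{classic_Fubini} delivers coordinate by coordinate; and making sure the null set $N$ of the defining sequence is taken inside the product $\sigma$-field $\mathcal{F}\otimes\mathcal{F}'$ so that Lemma~\ref{lem_Fub1} applies to it.
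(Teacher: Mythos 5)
Your proposal is correct and follows essentially the same route as the paper: part (a) via null slices of $N$, Lemma~\ref{lem_Fub1} for section-measurability and the classic Fubini theorem for integrability of the norm; part (c) by proving the identity for simple maps with the classic Fubini theorem and then passing to the limit along a defining sequence using dominated convergence on both $\Phi$ and $\Om$ (your $h_n$ is the paper's $Is_n$ from Lemma~\ref{lem_Fub3}). The measure-theoretic caveats you flag are exactly the ones the paper handles implicitly, so there is nothing to add.
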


The proof of Theorem~\ref{Fubini} can be split in two lemmas.

\begin{lem}\label{point(a)}
 Part (a) of Theorem~\ref{Fubini} holds.
\end{lem}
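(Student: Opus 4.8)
The plan is to reduce the Bochner-valued statement to the scalar Fubini theorem (Theorem~\ref{classic_Fubini}) by composing with linear functionals and by controlling norms. First I would fix $f \in \mathcal{L}^1_B(\mu\otimes\nu)$; by definition $f$ is measurable, separable, and $\int_{\Om\times\Phi}\|f\|\,d(\mu\otimes\nu)<\infty$. The scalar function $(\om,\phi)\mapsto\|f(\om,\phi)\|$ is measurable and lies in $\mathcal{L}^1_\R(\mu\otimes\nu)$, so Theorem~\ref{classic_Fubini}(a) applied to $\|f\|$ gives a $\mu$-null set $N_1\subseteq\Om$ such that for every $\om\notin N_1$ the section $\phi\mapsto\|f(\om,\phi)\|$ is in $\mathcal{L}^1_\R(\nu)$, i.e. $\int_\Phi\|f(\om,\phi)\|\,d\nu(\phi)<\infty$. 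This handles the integrability-of-the-norm half of ``$f(\om,\cdot)$ is $\nu$-Bochner integrable''.

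The remaining half is to show that for almost every $\om$ the section $f(\om,\cdot):\Phi\to B$ is measurable and separable. For separability: since $f$ is separable, there are a $(\mu\otimes\nu)$-null set $M\subseteq\Om\times\Phi$ and a countable $Y\subseteq B$ with $f((\Om\times\Phi)\setminus M)\subseteq\overline Y$. By Lemma~\ref{lem_Fub1} applied to $M$, the function $\om\mapsto\nu(S_\Phi(\om,M))$ is measurable, and its integral over $\Om$ equals $(\mu\otimes\nu)(M)=0$; hence there is a $\mu$-null set $N_2$ such that for $\om\notin N_2$ the slice $S_\Phi(\om,M)$ is $\nu$-null. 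For such $\om$, $f(\om,\phi)\in\overline Y$ for all $\phi\notin S_\Phi(\om,M)$, so $f(\om,\cdot)$ is separable with the same countable set $Y$. For measurability of the section: here I would use the concrete description of a Bochner integrable map. Take a defining sequence $(\{s_n\}_{n\in\N},N)$ for $f$; each $s_n=\sum_i\indicc{A^n_i}b^n_i$ is simple on $\Om\times\Phi$, so each section $s_n(\om,\cdot)=\sum_i\indicc{S_\Phi(\om,A^n_i)}b^n_i$ is a simple map on $\Phi$ (the sections of measurable sets are measurable by Lemma~\ref{lem_Fub1}). Again by Lemma~\ref{lem_Fub1}, $\om\mapsto\nu(S_\Phi(\om,N))$ integrates to $0$, so off a $\mu$-null set $N_3$ the slice $S_\Phi(\om,N)$ is $\nu$-null; for $\om\notin N_3$ the sequence $s_n(\om,\cdot)$ converges to $f(\om,\cdot)$ pointwise $\nu$-almost everywhere on $\Phi$, so $f(\om,\cdot)$ is $\nu$-measurable, hence (by Lemma~\ref{equivalence1}) measurable and separable.

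Putting $N:=N_1\cup N_2\cup N_3$, which is $\mu$-null, for every $\om\notin N$ the section $f(\om,\cdot)$ is measurable, separable, and has integrable norm, i.e. it is $\nu$-Bochner integrable, which is exactly part (a). Part (b) is the same argument with the roles of $\Om$ and $\Phi$ exchanged (using the $S_\Om(\phi,\cdot)$ half of Lemma~\ref{lem_Fub1} and part (d) of Theorem~\ref{classic_Fubini}). The main obstacle I anticipate is the bookkeeping for the measurability of the sections: one must be careful that the exceptional null sets coming from $M$ and from the defining sequence $N$ are genuinely controlled by Lemma~\ref{lem_Fub1} (i.e. that these sets lie in the product $\sigma$-field, or can be enlarged to such), and that ``$\nu$-measurable section'' really does upgrade to ``measurable and separable'' via Lemma~\ref{equivalence1} without needing the domination clause — which the remark after Lemma~\ref{equivalence1} explicitly permits. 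Everything else is a routine transfer through the classical scalar theorems.
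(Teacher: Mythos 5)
Your proof is correct and follows essentially the same route as the paper's: the integrability of $\|f(\om,\cdot)\|$ for almost every $\om$ comes from the classical Fubini theorem applied to the scalar map $\|f\|$, and separability of the sections comes from slicing the null set witnessing the separability of $f$ via Lemma~\ref{lem_Fub1}. The only divergence is the measurability of the sections, which the paper obtains directly for \emph{every} $\om$ (pre-images under $f(\om,\cdot)$ of Borel sets are slices of measurable subsets of $\Om\times\Phi$, hence measurable by Lemma~\ref{lem_Fub1}) rather than through a defining sequence and Lemma~\ref{equivalence1}; your detour is valid, and yields measurability only almost everywhere, which is all that is needed.
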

\begin{proof}
 Let $f$ belong to $\mathcal{L}^1_B(\mu \otimes \nu)$ and $N$ be a $(\mu \otimes \nu)$-null set such that $f(N^c)$ is contained in the closure of a countable subset $Y$ of $B$.

Using the notation of Lemma~\ref{lem_Fub1}, the slices $S_\Phi(\om, N)$ are $\nu$-null sets for almost every $\om$ in $\Om$. Hence, for almost every $\om$, the image $f(\om, \Phi \backslash S_\Phi(\om, N))$ is contained in the closure of $Y$. This shows that $f(\om, \cdot)$ is separable for almost every $\om$.

Using Lemma~\ref{lem_Fub1}, we see that for every measurable function $h$ from $\Om \times \Phi$ into any measure space the function $h(\om, \cdot)$ is measurable for every $\om$ in $\Om$. In particular, $f(\om, \cdot)$ is measurable.

It must still be verified that $\|f(\om, \cdot)\|$ is $\nu$-integrable for almost every $\om$ to obtain part (a) of Theorem~\ref{Fubini}. This can be achieved by means of the classic Fubini's Theorem~\ref{classic_Fubini}, because $\|f(\om, \cdot)\|$ belongs to $L^1_\R(\nu)$.
\end{proof}

\begin{lem}\label{lem_Fub3}
 Let $\Om, \Phi$ and $f$ be as in the statement of Fubini's Theorem~\ref{Fubini}. If $(\{s_n\}_{n \in \N}, N)$ is a defining sequence for $f$, then the following properties hold.
\begin{itemize}
  \item[(a)]For almost every $\om$ in $\Om$, the sequence $\{s_n(\om, \cdot)\}_{n \in \N}$ of maps from $\Phi$ to $B$ converges pointwise to $f(\om, \cdot)$ almost everywhere on $\Phi$.
  \item[(b)]The maps $Is_n : \Om \rightarrow B$ defined by $Is_n(\om) := \int_\Phi s_n(\om, \cdot) \, d\nu$ converge  pointwise to $If(\om):=\int_\Phi f(\om, \cdot) \, d\nu$ almost everywhere.
  \item[(c)]The maps $Is_n$ defined above are $\mu$-Bochner integrable.
  \item[(d)]The three preceding parts remain true if we switch the roles of $\Om$ and $\Phi$.
\end{itemize}
\end{lem}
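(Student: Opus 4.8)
The plan is to reduce everything to three tools already at hand: the classic Fubini Theorem~\ref{classic_Fubini} (applied to the scalar functions $\indicc{N}$ and $\|f\|$, the measurability of the slices being supplied by Lemma~\ref{lem_Fub1}), the structural description of $\mathcal{L}^0$ in Lemma~\ref{lemme1:30.11}, and the Bochner dominated convergence Theorem~\ref{convergence_dominee} applied one slice at a time. Write each $s_n$ as $\sum_{i=1}^{k_n}\indicc{A^n_i}b^n_i$ with the $A^n_i$ disjoint, lying in $\mathcal{F}\otimes\mathcal{F}'$ and of finite $(\mu\otimes\nu)$-measure (this last point because $s_n$ is integrable). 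The first step is to fix a $\mu$-conull set $\Om_0\subseteq\Om$ consisting of the $\om$ for which (i) $S_\Phi(\om,N)$ is $\nu$-null, (ii) every $S_\Phi(\om,A^n_i)$ has finite $\nu$-measure, and (iii) $\|f(\om,\cdot)\|$ is $\nu$-integrable. That each of these holds for $\mu$-almost every $\om$ is an instance of Lemma~\ref{lem_Fub1} combined with Theorem~\ref{classic_Fubini} applied, respectively, to $\indicc{N}$, to the $\indicc{A^n_i}$, and to $\|f\|$; a countable intersection of conull sets is conull.

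Parts (a) and (c) are then quick. For (a), if $\om\in\Om_0$ and $\phi\notin S_\Phi(\om,N)$ then $(\om,\phi)\in N^c$, so $s_n(\om,\phi)\to f(\om,\phi)$ by the defining property of the sequence; since $S_\Phi(\om,N)$ is $\nu$-null, this is pointwise convergence $\nu$-almost everywhere on $\Phi$. For (c), on $\Om_0$ one has $s_n(\om,\cdot)=\sum_i\indicc{S_\Phi(\om,A^n_i)}b^n_i$, a simple $\nu$-integrable map, whence $Is_n(\om)=\sum_i\nu(S_\Phi(\om,A^n_i))\,b^n_i$; Lemma~\ref{lem_Fub1} makes each coefficient $\om\mapsto\nu(S_\Phi(\om,A^n_i))$ measurable, so by Lemma~\ref{lemme1:30.11} the map $Is_n$ (extended by $0$ off $\Om_0$) is measurable and separable, and
$$\int_\Om\|Is_n\|\,d\mu\leq\sum_i\|b^n_i\|\int_\Om\nu(S_\Phi(\om,A^n_i))\,d\mu(\om)=\sum_i\|b^n_i\|\,(\mu\otimes\nu)(A^n_i)<\infty$$
by Theorem~\ref{classic_Fubini}, so $Is_n$ is $\mu$-Bochner integrable.

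Part (b) is the crux. Fix $\om\in\Om_0$. The inequality $\|s_n(\om,\phi)\|\leq 2\|f(\om,\phi)\|$ holds for all $(\om,\phi)\in N^c$ by the definition of a defining sequence, and since $S_\Phi(\om,N)$ is $\nu$-null it holds for $\nu$-almost every $\phi$; by condition (iii) the dominating function $2\|f(\om,\cdot)\|$ is $\nu$-integrable. Combining this with the pointwise convergence $s_n(\om,\cdot)\to f(\om,\cdot)$ $\nu$-almost everywhere from part (a), and with the fact that each $s_n(\om,\cdot)$ is simple — hence measurable and separable — while $f(\om,\cdot)$ is measurable and separable by Lemma~\ref{point(a)}, the Bochner dominated convergence Theorem~\ref{convergence_dominee} applies and gives $\int_\Phi s_n(\om,\cdot)\,d\nu\to\int_\Phi f(\om,\cdot)\,d\nu$, that is $Is_n(\om)\to If(\om)$. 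Part (d) follows by repeating the whole argument with the roles of $(\Om,\mu)$ and $(\Phi,\nu)$ interchanged, which is legitimate because the hypotheses are symmetric in the two factors. I expect the only genuinely delicate point to be that first step: checking that the countably many pointwise conditions living on $\Om\times\Phi$ — above all the domination $\|s_n\|\leq 2\|f\|$ — all descend to $\mu$-almost every slice. Once $\Om_0$ is pinned down, the remaining arguments are routine.
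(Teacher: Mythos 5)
Your proof is correct and follows essentially the same route as the paper: slicing the null set $N$ via Lemma~\ref{lem_Fub1} for part (a), writing $Is_n(\om)=\sum_i\nu(S_\Phi(\om,A^n_i))\,b^n_i$ and invoking Lemmas~\ref{lem_Fub1} and~\ref{lemme1:30.11} for part (c), and applying dominated convergence slice by slice for part (b). The only cosmetic difference is that for the integrability of $Is_n$ you compute $\sum_i\|b^n_i\|(\mu\otimes\nu)(A^n_i)$ directly, where the paper instead dominates by $2\|f\|$; both work, and your explicit construction of the conull set $\Om_0$ makes the almost-everywhere bookkeeping cleaner than the paper's.
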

\begin{proof}
Let $(\{s_n\}_{n \in \N}, N)$ be a defining sequence for $f$. If we use the notation of Lemma~\ref{lem_Fub1}, the sets $S_\Phi(\om, N)$ are of $\nu$-null sets for almost every $\om$ in $\Om$. For such a $\om$, the functions $s_n(\om, \cdot)$ converge pointwise to $f(\om, \cdot)$ on $S_\Phi(\om, N)^c$. In other words, for almost every $\om$ in $\Om$, the $s_n(\om, \cdot)$'s converge to $f(\om, \cdot)$ almost everywhere on $\Phi$. This proves part (a).

Once we realize that the maps $Is_n$ and $If$ are well defined (by using Lemma~\ref{point(a)}), part (b) is proved by an application of dominated convergence Theorem~\ref{convergence_dominee}.

Let us prove part (c). First, we show that the $Is_n$'s are measurable and separable. If $s_n$ is a finite sum of the form
$$
s_n = \sum_k \indicc{A_k} \, y_k,
$$
then $Is_n(\om) = \sum_k \mu(S_\Phi(\om, A_k)) y_k$. By Lemma~\ref{lem_Fub1}, the previous equality implies that $Is_n$ is a finite sum of measurable and separable maps.  It now follows from part (b) of Lemma~\ref{lemme1:30.11} that the $Is_n$'s are measurable and separable. Because we have the estimate $\|s_n (\om, \phi)\| \leq 2 \|f(\om, \phi)\|$, one can see that
$
\int_\Om \left\| \int_\Phi s_n(\om, \cdot) \, d\nu \right\| \, d\mu
$
is finite.
\end{proof}

With these lemmas, the proof of Fubini's Theorem becomes straightforward.

\begin{proof}[Proof of Thm~\ref{Fubini}]
Point (a) is Lemma~\ref{point(a)} and part (b) is symmetric to part (a). To prove part (c), it is enough to show that the equality
$$
\int_{\Om \times \Phi} f \, d(\mu\otimes\nu) = \int_\Om \left(\int_\Phi f(\om, \phi) \, d\nu(\phi) \right) \, d\mu(\om)
$$
holds. If $f = \indicc{A} \, y$, where $A$ belongs to $\mathcal{F} \otimes \mathcal{F}'$ and $y$ belongs to $Y$, then Theorem~\ref{commute_lineaires} gives
$$
\int_{\Om \times \Phi} f \, d(\mu \otimes \nu) = \left(\int_{\Om \times \Phi} \indicc{A} \, d(\mu \otimes \nu) \right)y.
$$
By classic Fubini's Theorem, the right-hand side is equal to
$$
\left(\int_\Om \int_\Phi \indicc{A}(\om, \phi) \, d\nu(\phi) \, d\mu(\om)\right) y = \int_\Om \int_\Phi f(\om, \nu) \, d\nu(\phi) \, d\mu(\om)
$$
and so the result holds whenever $f$ is the product of the characteristic function of a measurable set and a constant vector in $B$. Using the linearity of the Bochner integral, we see that the result also holds for simple maps.

If $f$ belongs to $\mathcal{L}^1_B(\mu \otimes \nu)$, let $(\{s_n\}_{n \in \N}, N)$ be a defining sequence for $f$. We have
$$
\int_{\Om \times \Phi} f \, d(\mu \otimes \nu) = \lim_{n \to \infty} \int_{\Om \times \Phi} s_n \, d(\mu \otimes \nu) = \lim_{n \to \infty} \int_\Om \left( \int_\Phi s_n(\om, \phi) \, d\nu(\phi) \right) \,d\mu(\om).
$$
Using parts (b) and (c) of Lemma~\ref{lem_Fub3} and the dominated convergence theorem in order to switch the integral over $\Om$ and the limit, we have now reached the end.

\end{proof}

\chapter{$G$-spaces and Banach modules}\label{modules:Ch}
  This chapter is essentially a collection of definitions and facts about $G$-spaces and Banach $G$-modules. About the presented matters, sources are~\cite{These_Monod}, \cite{AVENIR} and~\cite{Folland}. At the end of the chapter, a theorem due to Anker~\cite{Anker} will be restated in a more general setting than its original one.

For the rest of the chapter, $G$ will denote a locally compact topological group. Whenever notions involving a measure on $G$ will be brought up, for example the space $L^1G$, they are to be understood relatively to the left Haar measure on $G$, which will be denoted by $\lambda$. Moreover, when the word \emph{action} will be used without further details, let us agree that it consists in a left action.

Let us mention here that $C_c(G)$\index{Continuous c@$C_c(G)$} denotes the set of all compactly supported continuous functions on $G$.

\section{$G$-spaces}

By a $G$-\emph{space}\index{G space@$G$-space}, we mean a topological space on which the group $G$ acts. The epithet \emph{left} is added when the action is from the left. The actions will be denoted by $\cdot$ or by simple juxtaposition. In other words, the result of the action of an element $g$ in $G$ on an element $x$ belonging to some left $G$-space is $g \cdot x$ or simply $gx$. If $X$ and $B$ are left $G$-spaces, a left action on $B^X$ can be defined by declaring that
$$
(g \cdot f)(x)= g \cdot f(\inv{g} \cdot x) \quad (x \in X, g \in G),
$$
for every map $f : X \rightarrow B$. We will call this action the \emph{diagonal action}\index{diagonal action} of $G$ on $B^X$. An action on a $G$-space $X$ is \emph{continuous}\index{continuous action} if the map
$$
\begin{array}{ccl}
 G \times X & \longrightarrow & X \\
(g, x)& \longmapsto & g \cdot x
\end{array}
$$
is continuous. If $X$ is a $G$-space with continuous action, we say that $X$ is a \emph{continuous} $G$-\emph{space}\index{continuous G space@continuous $G$-space}. Our first interesting result concerns the regularity of the diagonal action. It requires the statement of two lemmas.

\begin{lem}\label{lem1_diag_action}
 Let $(B, d)$ be a continuous $G$-metric space. For every $g$ in $G$, every compact subset $A$ of $B$ and every $\epsilon >0$, there exists a neighborhood $U$ of $g$ such that
$$
d(h  a, g a) < \epsilon \quad ( h \in U, a \in A).
$$
\end{lem}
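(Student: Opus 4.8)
The plan is to exploit the compactness of $A$ together with the joint continuity of the action in order to upgrade pointwise closeness into closeness that is uniform over $A$. The only subtlety is that the triangle inequality one ends up writing will contain a term of the form $d(ga, ga_i)$, which has to be controlled separately by invoking the continuity of the single map $b \mapsto gb$.

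First I would fix $g$, $A$ and $\epsilon$ (the case $A = \emptyset$ being vacuous) and work locally around each point $a$ of $A$. Using the continuity at $a$ of the map $B \rightarrow B$, $b \mapsto gb$, I would pick an open neighbourhood $W_a$ of $a$ such that $d(gb, ga) < \epsilon/3$ for all $b \in W_a$. Using the continuity of the action $G \times B \rightarrow B$ at the point $(g, a)$, where it takes the value $ga$, I would then pick open neighbourhoods $U_a \ni g$ and $V_a \ni a$ with $V_a \subseteq W_a$ such that $d(hb, ga) < \epsilon/3$ whenever $(h, b) \in U_a \times V_a$.

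Next, the family $\{V_a\}_{a \in A}$ is an open cover of the compact set $A$, so there is a finite subcover $V_{a_1}, \dots, V_{a_n}$; I would set $U := \bigcap_{i=1}^n U_{a_i}$, which is an open neighbourhood of $g$. Given $h \in U$ and $a \in A$, choose $i$ with $a \in V_{a_i}$. Then $(h, a) \in U_{a_i} \times V_{a_i}$ gives $d(ha, ga_i) < \epsilon/3$, while $a \in V_{a_i} \subseteq W_{a_i}$ gives $d(ga_i, ga) < \epsilon/3$, and the triangle inequality yields $d(ha, ga) < \epsilon$, as desired.

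Since every step is an immediate appeal either to continuity or to the definition of compactness, there is no genuine obstacle here; the only point worth a moment's care is the bookkeeping that makes the triangle inequality close, namely reserving a separate neighbourhood $W_a$ for the continuity of $b \mapsto gb$ in addition to the one coming from the joint continuity of the action.
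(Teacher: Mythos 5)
Your proof is correct and follows essentially the same route as the paper's: joint continuity of the action at each point $(g,a)$, a finite subcover of $A$, the intersection of the corresponding neighbourhoods of $g$, and a triangle inequality through $ga_i$. The only difference is that you make the two continuity appeals (for $d(ha,ga_i)$ and for $d(ga_i,ga)$) explicit and separate, which the paper compresses into a single displayed inequality.
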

\begin{proof}
 Fix $g$ in $G$, $\epsilon > 0 $ and a compact subset $A$ of $B$. For every $b$ in $A$, by the continuity of the action at $(g, b)$, there exists open neighborhoods $U_b$ and $V_b$  of $g$, respectively $b$, such that
$$
d(ha, ga) \leq d(ha, gb) + d(gb, ga) < \epsilon \quad (h \in U_b, a \in V_b).
$$
Let $b_1, \dots, b_n$ be elements of $A$ such that
$
A \subseteq V_{b_1} \cup \cdots \cup V_{b_n}.
$
The neighborhood of $g$ defined by $U := U_{b_1} \cap \cdots \cap U_{b_n}$ is such that
\begin{equation*}
d(ha, ga) < \epsilon \quad (h \in U, a \in A). \qedhere
\end{equation*}
\end{proof}

\begin{lem}\label{lem2_diag_action}
 Let $X$ be a compact $G$-space and endow $X$ with the unique uniform structure compatible with its topology. If the action of $G$ on $X$ is continuous, then the family of maps $\{f_x\}_{x \in X}$ from $G$ to $X$ defined by $f_x(g):=g \cdot x$ is equicontinuous.
\end{lem}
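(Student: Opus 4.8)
The plan is to mimic the argument of Lemma~\ref{lem1_diag_action}, the only differences being that the metric is replaced by an arbitrary entourage of the (unique) uniformity on $X$ and that we must establish equicontinuity at an arbitrary point $g$ of $G$. As a preliminary I would recall that a compact Hausdorff space carries a unique uniformity compatible with its topology (whose entourages are exactly the neighborhoods of the diagonal in $X \times X$), so that the word ``equicontinuous'' is unambiguous. So fix $g \in G$ and an entourage $V$ of the uniformity of $X$; the goal is to find a neighborhood $U$ of $g$ such that $(h\cdot x, g\cdot x) \in V$ for every $h \in U$ and every $x \in X$. First I would pick a symmetric entourage $W$ with $W \circ W \subseteq V$.

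For each $x \in X$, continuity of the action at the point $(g, x) \in G \times X$ provides open neighborhoods $U_x$ of $g$ in $G$ and $O_x$ of $x$ in $X$ such that $h\cdot y$ lies in the $W$-neighborhood $W[g\cdot x] := \{z \in X : (z, g\cdot x) \in W\}$ whenever $h \in U_x$ and $y \in O_x$. Taking $h = g$ in this inclusion shows in particular that $g\cdot y \in W[g\cdot x]$ for every $y \in O_x$; combining this with the previous inclusion and the symmetry of $W$ yields $(h\cdot y, g\cdot y) \in W\circ W \subseteq V$ for all $h \in U_x$ and all $y \in O_x$.

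Now I invoke the compactness of $X$: the open cover $\{O_x\}_{x\in X}$ admits a finite subcover $O_{x_1}, \dots, O_{x_n}$. Set $U := U_{x_1} \cap \cdots \cap U_{x_n}$, a neighborhood of $g$. Given $h \in U$ and $x \in X$, choose $i$ with $x \in O_{x_i}$; since also $h \in U_{x_i}$, the previous paragraph gives $(h\cdot x, g\cdot x) \in V$, that is, $(f_x(h), f_x(g)) \in V$. As $V$ and $g$ were arbitrary, the family $\{f_x\}_{x\in X}$ is equicontinuous.

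I do not expect any genuine obstacle here; the essential point, exactly as in Lemma~\ref{lem1_diag_action}, is that the compactness of $X$ is what allows one to pass from the ``local in $x$'' continuity of the action at $(g,x)$ to a single neighborhood of $g$ that works simultaneously for all $x \in X$. (An alternative, slightly slicker route would be to write $h\cdot y = (hg^{-1})\cdot(g\cdot y)$ and thereby reduce to proving that for every entourage $V$ there is a neighborhood $U'$ of the identity with $(k\cdot z, z) \in V$ for all $k \in U'$ and all $z \in X$, but this reduction still rests on the same compactness argument, so I would keep the direct proof.)
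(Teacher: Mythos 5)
Your proof is correct and follows essentially the same route as the paper's: use continuity of the action at each point $(g,x)$ to produce neighborhoods $U_x$ of $g$ and $O_x$ of $x$, extract a finite subcover of the compact space $X$, and intersect the corresponding $U_{x_i}$. In fact your version is slightly more careful than the paper's, since by choosing a symmetric entourage $W$ with $W \circ W \subseteq V$ you justify the passage from ``$h\cdot y$ is $V$-close to $g\cdot x_i$'' to ``$h\cdot y$ is $V$-close to $g\cdot y$'', a step the paper's proof leaves implicit.
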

\begin{proof}
First, some notations must be set. For a subset $V$ of $X \times X$, let $V[x]$ be the set of all the elements $y$ of $X$ such that $(y, x)$ belongs to $V$.

 Assume that $X$ is a continuous $G$-space and fix $g$ in $G$. Let us show that the family of maps given above is equicontinuous at $g$.  Choose a neighborhood of the diagonal $V$ in $X \times X$ and recall that for every $x$ in $X$, the set $V[x]$ is a neighborhood of $x$. We will show that $\bigcap \inv{f_x}(V[f_x(g)])$, where the intersection ranges over all the $x$'s in $X$, is a neighborhood of $g$. For every $x$ in $X$, let $U_x$ and $V_x$ be open neighborhood of $g$ and $x$, respectively, such that
$$
f_y(h) = h \cdot y \in V[f_x(g)] \quad (h \in U_x, y \in V_x).
$$
The compactness of $X$ implies that there exists $x_1, \dots, x_n$ such that
$$
X \subseteq V_{x_1} \cup \cdots \cup V_{x_n}.
$$
Setting $U := U_{x_1} \cap \cdots \cap U_{x_n}$, we obtain that $f_x(h)$ belongs to $V[f_x(g)]$ for every $h$ in $U$ and every $x$ in $X$. This means that $\bigcap \inv{f_x}(V[f_x(g)])$ contains the neighborhood $U$.
\end{proof}

\begin{prop}\label{action_diag}
 Let $X$ be a compact $G$-space, $B$ a normed $G$-vector space and endow $C(X, B)$ with the uniform norm. If the actions of $G$ on $X$ and $B$ are continuous and if the latter is by isometries, then the diagonal action of $G$ on $C(X, B)$ is norm continuous.
\end{prop}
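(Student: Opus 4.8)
The plan is to establish continuity of the map $(g,f) \mapsto g \cdot f$ at an arbitrary point $(g_0, f_0)$ of $G \times C(X, B)$; since $C(X,B)$ carries the uniform norm it suffices to make $\sup_{x \in X}\|(g \cdot f)(x) - (g_0 \cdot f_0)(x)\|$ small. (That $g \cdot f$ again belongs to $C(X,B)$ is immediate, being the composite of the continuous maps $x \mapsto \inv{g}\cdot x$, of $f$, and of $b \mapsto g \cdot b$.) Fixing $\epsilon > 0$ and writing, for $x \in X$,
$$
(g \cdot f)(x) - (g_0 \cdot f_0)(x) = g \cdot f(\inv{g}\cdot x) - g_0 \cdot f_0(\inv{g_0}\cdot x),
$$
I would insert the intermediate vectors $g \cdot f_0(\inv{g}\cdot x)$ and $g_0 \cdot f_0(\inv{g}\cdot x)$ and apply the triangle inequality, which splits the quantity to be controlled into three summands, to be bounded each by $\epsilon/3$.

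The summand $\|g \cdot f(\inv{g}\cdot x) - g \cdot f_0(\inv{g}\cdot x)\|$ equals $\|f(\inv{g}\cdot x) - f_0(\inv{g}\cdot x)\|$ because $G$ acts on $B$ by isometries, hence is bounded by $\|f - f_0\|$ uniformly in $g$ and $x$, so it is $< \epsilon/3$ as soon as $f$ lies in the ball of radius $\epsilon/3$ about $f_0$. Likewise $\|g_0 \cdot f_0(\inv{g}\cdot x) - g_0 \cdot f_0(\inv{g_0}\cdot x)\| = \|f_0(\inv{g}\cdot x) - f_0(\inv{g_0}\cdot x)\|$; here $f_0$ is uniformly continuous on the compact space $X$ equipped with its unique compatible uniform structure, so I would choose an entourage $W$ with $(y,y') \in W \Rightarrow \|f_0(y) - f_0(y')\| < \epsilon/3$, and then apply the equicontinuity of the family $\{g \mapsto g\cdot x\}_{x \in X}$ from Lemma~\ref{lem2_diag_action} at the point $\inv{g_0}$ (legitimate since inversion on $G$ is continuous) to obtain a neighborhood $U_2$ of $g_0$ such that $(\inv{g}\cdot x, \inv{g_0}\cdot x) \in W$ for all $g \in U_2$ and all $x \in X$, which makes this summand $< \epsilon/3$ uniformly in $x$.

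The remaining summand is $\|g \cdot f_0(\inv{g}\cdot x) - g_0 \cdot f_0(\inv{g}\cdot x)\|$. Regarding $B$ as a continuous $G$-metric space via the metric induced by its norm, and noting that $A := f_0(X)$ is compact as the continuous image of a compact space, Lemma~\ref{lem1_diag_action} applied to $g_0$, $A$ and $\epsilon/3$ yields a neighborhood $U_1$ of $g_0$ with $\|g \cdot a - g_0 \cdot a\| < \epsilon/3$ for every $g \in U_1$ and $a \in A$; since $f_0(\inv{g}\cdot x) \in A$, this summand is $< \epsilon/3$ for $g \in U_1$, uniformly in $x$. Putting the three estimates together, for $g \in U_1 \cap U_2$ and $\|f - f_0\| < \epsilon/3$ one gets $\|g \cdot f - g_0 \cdot f_0\| < \epsilon$, which is the asserted continuity.

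The only genuinely delicate point is the second of the three summands just treated: one must control the displacement of $\inv{g}\cdot x$ from $\inv{g_0}\cdot x$ \emph{simultaneously for all} $x \in X$, and this uniformity is exactly what the equicontinuity statement of Lemma~\ref{lem2_diag_action} (itself a consequence of the compactness of $X$) provides. The other two summands are routine once one uses that the action on $B$ is isometric and that $f_0(X)$ is compact.
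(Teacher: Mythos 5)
Your proof is correct and follows essentially the same route as the paper's: the same three-way splitting (isometry of the action on $B$ to absorb the perturbation of $f$, Lemma~\ref{lem1_diag_action} applied to the compact set $f_0(X)$, and uniform continuity of $f_0$ combined with the equicontinuity of Lemma~\ref{lem2_diag_action}), with only the cosmetic differences that you phrase it in $\epsilon/3$ terms rather than with nets and swap the order of the two intermediate vectors. Nothing is missing.
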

\begin{proof}
 Let $\{g_n\}_{n \in N}$ and $\{f_m\}_{m \in M}$ be nets in $G$ and $C(X, B)$ converging respectively to $g$ and $f$. We estimate
$$
\|g_n \cdot f_m - g \cdot f\| \leq \|g_n \cdot f_m - g_n \cdot f \|+\| g_n \cdot f - g \cdot f\|.
$$
Since $G$ acts isometrically on $B$, it also acts isometrically on $C(X, B)$ and so the first term in the right-hand side tends to $0$. The second one is bounded from above by
$$
\sup_{x \in X} \|g_n \cdot f(\inv{g_n} \cdot x) - g_n \cdot f(\inv{g} \cdot x)\| + \sup_{x \in X} \|g_n \cdot f(\inv{g} \cdot x) - g \cdot f(\inv{g} \cdot x)\|
$$
which is equal to
$$
\sup_{x \in X} \| f(\inv{g_n} \cdot x) -  f(\inv{g} \cdot x)\| + \sup_{x \in X} \|g_n \cdot f(\inv{g} \cdot x) - g \cdot f(\inv{g} \cdot x)\|.
$$
The map $f$ being continuous on a compact space, it is uniformly continuous and so, by Lemma~\ref{lem2_diag_action}, the first term in the above expression tends to $0$. The second one also tends to $0$ because of Lemma~\ref{lem1_diag_action} and the fact that $f(X)$ is a compact subset of $B$.
\end{proof}

\subsection*{The space $L^1G$}

We want to apply Proposition~\ref{action_diag} to $B = L^1G$. That is to say, we want to show that for every compact continuous $G$-space $X$ the diagonal action of $G$ on $C(X, L^1G)$ is norm continuous.

\begin{prop}\label{uniformement_continue:prop}
 If $f$ belongs to $C_c(G)$, then $\|y \cdot f - f\|_\infty$ tends to zero as $y$ tends to $1_G$.
\end{prop}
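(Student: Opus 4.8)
The plan is to exploit the compact support of $f$ together with its uniform continuity, keeping careful track of the fact that left translation in $G$ is not automatically an isometry of $C_c(G)$ when the group is not unimodular — but $L^1$-norm issues do not arise here since we work with the sup-norm. Let $K$ be the compact support of $f$ and fix $\epsilon>0$. First I would choose a compact neighbourhood $V_0$ of $1_G$ and set $L:=V_0 K$, which is compact; this will be the set outside of which all the relevant translates $y\cdot f$ (for $y\in V_0$) vanish, so that we only have to control things on the fixed compact set $L$.

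Next I would use the uniform continuity of $f$. Recall $(y\cdot f)(x)=f(y^{-1}x)$. Since $f$ is continuous with compact support on the locally compact group $G$, it is uniformly continuous in the sense that there is a neighbourhood $U$ of $1_G$ such that $|f(s)-f(t)|<\epsilon$ whenever $s^{-1}t\in U$ (equivalently $ts^{-1}\in U$ for right uniform continuity; either version works, and for left translates one uses the left uniform structure). Concretely, for each $x\in G$ we have $(y^{-1}x)^{-1}x = x^{-1}y x$, which is not uniformly small, so the clean way is: write $|f(y^{-1}x)-f(x)|$ and observe $(y^{-1}x)(x)^{-1}=y^{-1}$, i.e. the two arguments $y^{-1}x$ and $x$ differ by left multiplication by $y^{-1}$. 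Hence, by left uniform continuity of $f$ (a standard fact for $C_c(G)$, obtained by covering $K$ by finitely many small translates), there is a neighbourhood $U$ of $1_G$ such that $y\in U$ implies $|f(y^{-1}x)-f(x)|<\epsilon$ for all $x\in G$ simultaneously.

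Finally I would assemble the estimate: for $y\in U\cap V_0$ and any $x\in G$, if $x\notin L$ then $x\notin K$ and $y^{-1}x\notin K$ (since $y^{-1}x\in K$ would force $x\in yK\subseteq V_0K=L$), so both $f(x)$ and $f(y^{-1}x)$ vanish; if $x\in L$ the bound $|f(y^{-1}x)-f(x)|<\epsilon$ applies directly. Taking the supremum over $x$ gives $\|y\cdot f-f\|_\infty\le\epsilon$ for all $y$ in the neighbourhood $U\cap V_0$ of $1_G$, which is exactly the claim. The only point requiring mild care — and the place I would spend the most words — is the justification of the uniform (as opposed to merely pointwise) continuity of $f\in C_c(G)$ with respect to the relevant one-sided uniform structure of $G$; this is a routine compactness argument (cover $K$ by finitely many neighbourhoods on which $f$ oscillates by less than $\epsilon$, intersect the corresponding translation-neighbourhoods of $1_G$), but it is the substantive input, everything else being bookkeeping with supports.
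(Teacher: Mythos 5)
Your plan is correct and follows essentially the same route as the paper: a finite-cover compactness argument establishing that $|f(y^{-1}x)-f(x)|$ is small uniformly in $x$, combined with bookkeeping for the points where $f$ and its translate both vanish. The only differences are cosmetic --- the paper handles points outside the support via the symmetry of the covering data (choosing symmetric $V_x$ with $V_xV_x\subseteq U_x$, which is also exactly the trick your sketched ``cover $K$ and intersect'' argument needs when fleshed out) rather than via your enlarged compact $L=V_0K$.
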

\begin{proof}
 Let $f$ belong to $C_c(G)$ and fix $\epsilon > 0$. If $K$ denotes the compact support of $f$, then for every $x$ in $K$ there is a neighborhood $U_x$ of $1_G$ such that $|f(\inv{y} x) - f(x)| < \epsilon/2$ for every $y$ in $U_x$ and there is a symmetric neighborhood $V_x$ of $1_G$ such that $V_x V_x$ is contained in $U_x$. By compacity of $K$, there exists $x_1, \dots, x_n$ in $K$ such that
$$
K \subseteq  V_{x_1} x_1 \cup \cdots \cup  V_{x_n}x_n.
$$
Define a neighborhood of $1_G$ by $V := V_{x_1}  \cap \cdots \cap  V_{x_n}$. We claim that $\|y \cdot f - f\|_\infty < \epsilon$ for every $y$ in $V$.

To show this, let $y$ belong to $V$. If $x$ belongs to $K$, then there is some $j$ for wich $x$ belongs to $V_{x_j} x_j$ and so $x = \inv{y_j} x_j$ for some $y_j$ in $V_{x_j}$. We have $\inv{y} x =\inv{y} \inv{y_j} x_j = y' x_j$ with $y'$ in $U_{x_j}$. This implies that
$$
|f(\inv{y} x) - f(x)| \leq |f(y' x_j) - f(x_j)| + |f(x_j) - f(\inv{y_j} x_j)| \leq \epsilon.
$$
Similarly, if $\inv{y} x$ belongs to $K$, then $|f(\inv{y} x) - f(x)| < \epsilon$. If neither $x$ nor $\inv{y} x$ belong to $K$, then $f(x) = f(\inv{y}x) = 0$.
\end{proof}

\begin{prop}\label{continue_sur_L1:prop}
If $f$ belongs to $L^1G$, then $\|y \cdot f - f \|_1$ tends to $0$ as $y$ tends to $1_G$.
\end{prop}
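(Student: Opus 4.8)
The statement is the classical continuity of left translation on $L^1G$, and the natural strategy is a three-$\epsilon$ argument reducing to the already established uniform continuity of compactly supported continuous functions (Proposition~\ref{uniformement_continue:prop}). Recall that the action on $L^1G$ is $y\cdot f(x)=f(\inv y x)$ and that, by left-invariance of the Haar measure, this action is isometric: $\|y\cdot h\|_1=\|h\|_1$ for all $h\in L^1G$ and $y\in G$. I will also use the standard fact (from analysis on locally compact groups) that $C_c(G)$ is dense in $L^1G$ for the $\|\cdot\|_1$-norm.

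First I would fix $f\in L^1G$ and $\epsilon>0$, and pick $g\in C_c(G)$ with $\|f-g\|_1<\epsilon/3$. Writing
$$
\|y\cdot f-f\|_1\leq \|y\cdot f-y\cdot g\|_1+\|y\cdot g-g\|_1+\|g-f\|_1,
$$
the first term equals $\|f-g\|_1<\epsilon/3$ by the isometry property, and the third is $<\epsilon/3$ by the choice of $g$. So everything reduces to showing that $\|y\cdot g-g\|_1\to 0$ as $y\to 1_G$.

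For the middle term, the point is to localise the support uniformly in $y$. Let $K$ be the compact support of $g$ and let $V$ be a fixed compact symmetric neighbourhood of $1_G$. For $y\in V$, the function $x\mapsto g(\inv y x)-g(x)$ is supported in the fixed compact set $L:=VK\cup K$ (indeed $\inv y x\in K$ forces $x\in yK\subseteq VK$). Hence for $y\in V$,
$$
\|y\cdot g-g\|_1=\int_L |g(\inv y x)-g(x)|\,d\lambda(x)\leq \lambda(L)\,\|y\cdot g-g\|_\infty .
$$
By Proposition~\ref{uniformement_continue:prop}, $\|y\cdot g-g\|_\infty\to 0$ as $y\to 1_G$, so there is a neighbourhood $V'\subseteq V$ of $1_G$ on which $\lambda(L)\,\|y\cdot g-g\|_\infty<\epsilon/3$. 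Combining the three estimates, $\|y\cdot f-f\|_1<\epsilon$ for all $y\in V'$, which is the claim.

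The only mildly delicate point is the uniform control of the support (choosing the fixed compact neighbourhood $V$ before letting $y$ vary), which is exactly what lets one convert the $\|\cdot\|_\infty$-estimate of Proposition~\ref{uniformement_continue:prop} into an $\|\cdot\|_1$-estimate; everything else is bookkeeping. The density of $C_c(G)$ in $L^1G$ is taken as known.
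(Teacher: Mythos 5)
Your proof is correct and follows essentially the same route as the paper: reduce by density of $C_c(G)$ in $L^1G$ (via the isometry of the action and a three-$\epsilon$ estimate, which the paper leaves implicit), then for a compactly supported continuous function bound the $L^1$-norm by $\lambda(L)$ times the $\|\cdot\|_\infty$-norm using a fixed compact neighbourhood of $1_G$ to control the support uniformly, and conclude with Proposition~\ref{uniformement_continue:prop}. The paper uses $K:=V(\mathrm{supp}\,f)$ where your $L=VK\cup K$; these coincide once $1_G\in V$, so there is no substantive difference.
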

\begin{proof}
 By density of $C_c(G)$ in $L^1G$, it is enough to prove the result for $f$ in $C_c(G)$. Assume that $f$ belongs to $C_c(G)$ and fix a compact neighborhood $V$ of $1_G$. Recall that since the multiplication from $G \times G$ to $G$ is continuous, the product $K_1 K_2$ of two compact subsets of $G$ is compact. In particular, the subset $K :=  V (\mathrm{supp} f)$ is compact and therefore of finite measure. Notice that if $y$ belongs to $V$, then $\mathrm{supp}(y \cdot f)$ is contained in $K$ and so $\|y \cdot f - f\|_1 \leq \lambda(K) \|y \cdot f - f\|_\infty$. Using Proposition~\ref{uniformement_continue:prop} and this estimate, we are done.
\end{proof}

\begin{cor}\label{action_G_L1_cont:cor}
 The action of $G$ on $L^1G$ is continuous.
\end{cor}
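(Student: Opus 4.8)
The plan is to deduce the joint continuity of the action map $G \times L^1G \to L^1G$, $(y,f) \mapsto y \cdot f$, from the continuity at the identity already established in Proposition~\ref{continue_sur_L1:prop}, following the pattern of the proof of Proposition~\ref{action_diag}. Two preliminary observations are needed. First, the left translation action is isometric: since $\lambda$ is left-invariant, $\|y\cdot f\|_1 = \|f\|_1$ for every $y$ in $G$ and every $f$ in $L^1G$; in particular $y\cdot f$ genuinely lies in $L^1G$ and $f \mapsto y\cdot f$ is a linear isometry. Second, being a left action, it satisfies $y_n \cdot f = y \cdot \big((y^{-1}y_n)\cdot f\big)$ for all $y, y_n$ in $G$ and $f$ in $L^1G$.

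Now let $\{y_n\}_{n \in N}$ and $\{f_m\}_{m \in M}$ be nets in $G$ and $L^1G$ converging to $y$ and $f$, respectively. By the triangle inequality,
\[
\|y_n \cdot f_m - y \cdot f\|_1 \leq \|y_n \cdot f_m - y_n \cdot f\|_1 + \|y_n \cdot f - y \cdot f\|_1.
\]
By the isometry property the first summand equals $\|f_m - f\|_1$, which tends to $0$. For the second summand, the isometry property together with the left action identity gives
\[
\|y_n \cdot f - y \cdot f\|_1 = \big\| y \cdot \big( (y^{-1}y_n)\cdot f - f \big)\big\|_1 = \big\| (y^{-1}y_n)\cdot f - f \big\|_1.
\]
Since multiplication and inversion in $G$ are continuous, $y^{-1}y_n \to 1_G$, so Proposition~\ref{continue_sur_L1:prop} shows this last quantity tends to $0$. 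Hence $\|y_n \cdot f_m - y \cdot f\|_1 \to 0$, which is exactly the continuity required by the definition of a continuous $G$-space.

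There is essentially no obstacle here; the only point deserving care is that, since $G$ need not be metrizable, the product $G \times L^1G$ need not be either, so joint continuity must be tested with nets rather than sequences — whence the argument is phrased in terms of nets, as was done for Proposition~\ref{action_diag}.
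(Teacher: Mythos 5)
Your argument is correct and is essentially identical to the paper's own proof: the same triangle-inequality splitting, the same use of the isometry of left translation to reduce the first term to $\|f_m - f\|_1$ and the second to $\|(y^{-1}y_n)\cdot f - f\|_1$, and the same appeal to Proposition~\ref{continue_sur_L1:prop}. Your extra remarks (spelling out the isometry and justifying the use of nets) are sound but add nothing beyond what the paper does.
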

\begin{proof}
 Let $\{g_n\}_{n \in N}$ and $\{f_m\}_{m \in M}$ be two nets converging respectively to $g$ in $G$ and $f$ in $L^1G$, respectively. We have
$$
\|g_n \cdot f_m - g \cdot f\|_1 \leq \|g_n \cdot f_m - g_n \cdot f\|_1 + \|g_n \cdot f - g \cdot f\|_1.
$$
Since $G$ acts isometrically on $L^1G$, the first term in the right-hand side is equal to $\| f_m -  f\|_1$, which tends to zero. The second term being equal to $\|\inv{g}g_n \cdot f -   f\|_1$, Proposition~\ref{continue_sur_L1:prop} shows that it also tends to zero.
\end{proof}

\begin{cor}\label{diag_action_L^1_continuous:cor}
 If $X$ is a compact continuous $G$-space, then the diagonal action on $C(X, L^1G)$ is continuous.
\end{cor}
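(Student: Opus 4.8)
The plan is to obtain this corollary as a direct application of Proposition~\ref{action_diag} to the normed $G$-vector space $B = L^1G$. To invoke that proposition I need exactly three ingredients: that the action of $G$ on the compact space $X$ is continuous, that the action of $G$ on $L^1G$ is continuous, and that the latter action is by isometries (together with the trivial observation that it is by linear maps, so that $L^1G$ really is a normed $G$-vector space).

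The first ingredient is part of the hypothesis, since $X$ is assumed to be a compact continuous $G$-space. The second is precisely Corollary~\ref{action_G_L1_cont:cor}, established above from the uniform continuity of compactly supported functions (Proposition~\ref{uniformement_continue:prop}) and the density of $C_c(G)$ in $L^1G$. The third is the only point needing a short verification: the action at hand is left translation, $(g \cdot f)(t) = f(\inv{g}\,t)$, which is visibly linear in $f$, and the left-invariance of the Haar measure $\lambda$ gives
\[
\|g \cdot f\|_1 = \int_G |f(\inv{g}\,t)| \, d\lambda(t) = \int_G |f(t)| \, d\lambda(t) = \|f\|_1
\]
for every $g$ in $G$ and every $f$ in $L^1G$, so each $g$ acts on $L^1G$ as a linear isometry.

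With these three facts recorded, Proposition~\ref{action_diag} applies verbatim with $B = L^1G$ and yields that the diagonal action of $G$ on $C(X, L^1G)$, equipped with the uniform norm, is norm continuous, which is the assertion. I do not anticipate any genuine obstacle here: all the analytic work has already been absorbed into Corollary~\ref{action_G_L1_cont:cor} and Proposition~\ref{action_diag}, and what remains is merely the bookkeeping check that $L^1G$ qualifies as a normed $G$-vector space on which $G$ acts continuously and isometrically.
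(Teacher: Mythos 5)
Your proof is correct and follows exactly the paper's route: the paper likewise deduces the corollary by combining Corollary~\ref{action_G_L1_cont:cor} with Proposition~\ref{action_diag} applied to $B = L^1G$. Your explicit verification that left translation acts by linear isometries (via left-invariance of the Haar measure) is a detail the paper leaves implicit, but it is the same argument.
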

\begin{proof}
 The result is a consequence of Corollary~\ref{action_G_L1_cont:cor} and Proposition~\ref{action_diag}.
\end{proof}

%
%
\section{Banach $G$-modules}
We introduce now the Banach $G$-modules and some associated constructions. In particular, we introduce $(G, X)$-modules and the $L^1G$-module structure of continuous Banach $G$-modules.
\subsection*{Terminology}
A \emph{Banach} $G$-\emph{module}\index{Banach G module@Banach $G$-module} is a pair $(\pi, E)$, where $E$ is a Banach space and $\pi$ a group homomorphism from $G$ to the group of isometric linear automorphisms of $E$. A Banach $G$-module $(\pi, E)$ will be merely referred as the Banach space $E$. For $g$ in $G$ and $v$ in $E$ we will write $g \cdot v$ or even $g v$ for $\pi(g)v$. A Banach $G$-module $(\pi, E)$ is \emph{continuous}\index{continuous Banach G module@continuous Banach $G$-module} if the map
$$
\begin{array}{ccl}
 G \times E & \longrightarrow & E\\
(g, v) & \longmapsto & \pi(g)v
\end{array}
$$
is continuous.

\begin{exple}\label{C(X, L1)_banach_mod:exple}
If $X$ is a compact space, then $C(X, L^1G)$ endowed with the uniform norm is a continuous Banach $G$-module for the diagonal action. This is mainly a consequence of Corollary~\ref{diag_action_L^1_continuous:cor}.
\end{exple}

Let $X$ be a compact space and endow $C(X)$ with the uniform norm. A Banach space $E$ is a $C(X)$-\emph{module}\index{C(X) module@$C(X)$-module} if $E$ is an algebraic $C(X)$-module and if the inequality $\|\phi v\| \leq \|\phi\| \|v\|$ holds for every $\phi$ in $C(X)$ and every $v$ in $E$. A $C(X)$-module $E$ is of \emph{type C}\index{type C} if the inequality
$$
\|\phi_1 v_1 + \cdots + \phi_n v_n\| \leq \|\phi_1 + \cdots + \phi_n\| \cdot \max_{1 \leq i \leq n} \|v_i\|
$$
holds for every $v_1, \dots, v_n$ in $E$ and every $\phi_1, \dots, \phi_n \geq 0$ in $C(X)$. For the same $\phi_i$'s, the module $E$ is of \emph{type M}\index{type M} if the inequality
$$
\|\phi_1 v \| + \cdots \|\phi_n v\| \leq \|\phi_1 + \cdots + \phi_n\| \cdot \|v\|
$$
holds for every $v$ in $E$. Notice that if $E$ is a $C(X)$-module of type C, then the inequality
\begin{equation}\label{typeC:eq}
\|\phi_1 v_1 + \cdots + \phi_n v_n\| \leq  \||\phi_1| + \cdots + |\phi_n|\| \cdot \max_{1 \leq i \leq n} \|v_i\|
\end{equation}
is always verified. This is seen by writing $\phi \cdot v = \phi^+ \cdot v + \phi^- \cdot (-v)$, for $\phi$ in $C(X)$ and $v$ in $E$.

The concept of type M module enables us to prove the following lemma.

\begin{lem}\label{dualTypeC}
Let $E$ be a $C(X)$-module.
\begin{itemize}
  \item[(a)]$E^*$ is of type C if and only if $E$ is of type M.
  \item[(b)]$E^*$ is of type M if and only if $E$ is of type C.
  \item[(c)]If $E$ is of type C (respectively M), then so is its double dual.
\end{itemize}
\end{lem}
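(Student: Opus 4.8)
The plan is to fix, once and for all, the dual $C(X)$-module structure $(\phi f)(v):=f(\phi v)$ for $\phi\in C(X)$, $f\in E^*$, $v\in E$; the estimate $\|\phi f\|\le\|\phi\|\,\|f\|$ is immediate from $\|\phi v\|\le\|\phi\|\,\|v\|$, so $E^*$ really is a $C(X)$-module and the statement makes sense. Parts (a) and (b) will then be pure duality computations with the norming property $\|w\|=\sup_{\|g\|\le 1}g(w)$, and part (c) will follow formally: if $E$ is of type C, then (b) gives that $E^*$ is of type M, and (a) applied to the module $E^*$ gives that $E^{**}$ is of type C; the type M case is symmetric, using (a) then (b).

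For part (a), I would first show that $E$ of type M implies $E^*$ of type C. Given $f_1,\dots,f_n\in E^*$ and $\phi_1,\dots,\phi_n\ge 0$ in $C(X)$, test $\sum_i\phi_i f_i$ against an arbitrary $v$ in the unit ball of $E$ and estimate $|(\sum_i\phi_i f_i)(v)|\le\sum_i\|f_i\|\,\|\phi_i v\|\le(\max_i\|f_i\|)\sum_i\|\phi_i v\|$, then invoke type M of $E$ to bound $\sum_i\|\phi_i v\|$ by $\|\sum_i\phi_i\|\,\|v\|$, and take the supremum over $v$. For the converse, given $v\in E$ and $\phi_i\ge 0$, use Hahn--Banach to choose norm-one $f_i\in E^*$ with $f_i(\phi_i v)=\|\phi_i v\|$ (possible since $\|\phi_i v\|=\sup_{\|f\|\le 1}f(\phi_i v)$), and compute $\sum_i\|\phi_i v\|=(\sum_i\phi_i f_i)(v)\le\|\sum_i\phi_i f_i\|\,\|v\|$, which type C of $E^*$ bounds by $\|\sum_i\phi_i\|\,\|v\|$.

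Part (b) is the mirror image. To get $E$ of type C implies $E^*$ of type M, given $f\in E^*$ and $\phi_i\ge 0$, fix $\epsilon>0$ and pick $v_i$ in the unit ball of $E$ with $(\phi_i f)(v_i)\ge\|\phi_i f\|-\epsilon$; summing, $\sum_i\|\phi_i f\|\le f(\sum_i\phi_i v_i)+n\epsilon\le\|f\|\,\|\sum_i\phi_i v_i\|+n\epsilon$, and type C of $E$ bounds $\|\sum_i\phi_i v_i\|$ by $\|\sum_i\phi_i\|$; let $\epsilon\to 0$. For the converse, given $v_1,\dots,v_n\in E$ and $\phi_i\ge 0$, use Hahn--Banach to pick a norm-one $f\in E^*$ with $f(\sum_i\phi_i v_i)=\|\sum_i\phi_i v_i\|$, whence $\|\sum_i\phi_i v_i\|=\sum_i(\phi_i f)(v_i)\le(\max_i\|v_i\|)\sum_i\|\phi_i f\|$, and type M of $E^*$ bounds $\sum_i\|\phi_i f\|$ by $\|\sum_i\phi_i\|$.

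All four computations are short chains of inequalities; the only place asking for a little care is the sign bookkeeping in the uses of Hahn--Banach and of the near-norming property, but this is harmless because $\|w\|=\sup_{\|g\|\le 1}g(w)$ (no absolute values needed) and because the $\phi_i$ are nonnegative. The main thing to keep straight, rather than a genuine obstacle, is which approximation is exact — a functional norming a fixed vector, via Hahn--Banach — and which needs an $\epsilon$ — a vector norming a fixed functional, since $E$ need not be reflexive.
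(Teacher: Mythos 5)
Your proof is correct, but for the converse directions it takes a genuinely different route from the paper. The paper's proof is very terse: it establishes only the ``easy'' implications ($E$ of type M gives $E^*$ of type C, and $E$ of type C gives $E^*$ of type M --- essentially your first computations in (a) and (b)), and then obtains the converses by observing that the type conditions descend to submodules and that $E$ embeds isometrically into $E^{**}$ as a $C(X)$-submodule, so that, e.g., $E^*$ of type C forces $E^{**}$ of type M and hence $E$ of type M. You instead prove the converses directly, using Hahn--Banach to produce norming functionals: a norm-one $f_i$ with $f_i(\phi_i v)=\|\phi_i v\|$ for the converse in (a), and a norm-one $f$ norming $\sum_i\phi_i v_i$ for the converse in (b). Both arguments are sound; yours is more self-contained and makes every inequality explicit, at the cost of two extra Hahn--Banach applications, whereas the paper's bidual trick is shorter but silently relies on checking that the canonical embedding $J:E\to E^{**}$ intertwines the $C(X)$-module structures (i.e.\ $J(\phi v)=\phi\, J(v)$ for the double-dual action) and that the defining inequalities of types C and M restrict to closed submodules. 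Your treatment of part (c), composing (a) and (b), coincides with the paper's. Your bookkeeping of where an exact norming functional is available versus where an $\epsilon$-approximate norming vector is needed (since $E$ need not be reflexive) is exactly right.
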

\begin{proof}
 Part (c) is obvious using parts (a) and (b). To prove (a) and (b), first observe that $E^*$ is of type C (respectively M) if $E$ is of type M (respectively C). To show that the converse also holds, notice that conditions C and M descend to submodules, embed $E$ into $E^{**}$ and repeat the argument above.
\end{proof}

Let $X$ be a compact $G$-space. The notions of Banach $G$-modules and $C(X)$-modules are now combined. A Banach space $E$ is a $(G, X)$-\emph{module}\index{G X module@$(G, X)$-module} if it is a Banach $G$-module as well as a $C(X)$-module and if the consecutive actions of $\inv{g}, \phi$ and $g$ correspond to the action of $g \cdot \phi$. In other words, the formula
\begin{equation}\label{C(X)compatibility}
g \cdot( \phi \cdot(\inv{g} \cdot v)) = (g \cdot \phi) \cdot v
\end{equation}
is asked to hold for every $g$ in $G$, every $\phi$ in $C(X)$ and every $v$ in $E$.

\begin{exple}\label{ExC(X)ModuldeTypeC}
 Let $X$ be a compact space and $B$ a Banach space. If we endow $C(X)$ and $C(X, B)$ with their uniform norms, then $C(X, B)$ is a $C(X)$-module of type C. In fact, if $\phi_1, \dots, \phi_n$ are nonnegative functions in $C(X)$ and $v_1, \dots, v_n$ belong to $C(X, B)$, then the inequality
$$
\sup_{x \in X} \|\phi_1(x) v_1(x) + \cdots +\phi_n(x) v_1(x)\|_B \leq \sup_{x \in X} \{\phi_1(x) \|v_1\| + \cdots + \phi_n(x) \|v_n\|\} 
$$
can easily be used to obtain
$$
\|\phi_1 v_1 + \cdots + \phi_n v_n\| \leq \|\phi_1 + \cdots + \phi_n\|  \cdot \max_{1 \leq i \leq n} \| v_i \|.
$$
Moreover, if $X$ is a continuous $G$-space and $B$ a continuous Banach $G$-module, then a careful and generous use of parenthesis together with Proposition~\ref{action_diag} shows that $C(X, B)$ is even a $(G, X)$-module for the diagonal action. In particular, by Corollary~\ref{diag_action_L^1_continuous:cor}, the space $C(X, L^1G)$ is a $(G, X)$-module of type C for the diagonal action.
\end{exple}
%
%
%
%
\subsection*{The $L^1G$-module structure of continuous Banach $G$-modules}

Consider the ring $(L^1G, +, *)$, where $*$ denotes the convolution operator and let $E$ be a continuous Banach $G$-module. Recall that for every $\psi$ in $C_c(G)$ and every $v$ in $E$, the Bochner integral
\begin{equation}\label{L1ModuleStruct}
\int_G \psi(t)  (t \cdot v) \,d\lambda(t)
\end{equation}
is well defined (see Remark~\ref{integrale_C_c(G):rmk}). From now on, the integral~\eqref{L1ModuleStruct} will denoted by $\psi * v$. Since the linear map $\psi \mapsto \psi * v$ is bounded with respect to the $L^1$ norm on $C_c(G)$, the density of $C_c(G)$ in $L^1G$ implies that we can still define $\psi * v$ for $\psi$ in $L^1G$. In fact, for $\psi$ in $L^1G$, let $\{\varphi_n\}_{n \in \N}$ be a sequence of compactly supported continuous functions converging in $L^1G$ to $\psi$. Since the estimate $\|\varphi_m * v - \varphi_n *v\| \leq \|\varphi_m - \varphi_n\|_1 \|v\|$ holds for every $v$ in $E$, the sequence of the $(\varphi_n * v)$'s is Cauchy. We define $\psi * v := \lim_{n \to \infty} \varphi_n * v$ for every $v$ in $E$. The above estimate implies that $\psi * v$ does not depend on the choice of the sequence $\{\varphi_n\}_{n \in \N}$.

The only tricky part in checking that this indeed defines a $L^1G$-module structure on $E$ is to verify that the equality
\begin{equation}\label{eq1:7.12}
(\psi * \varphi) * v = \psi * (\varphi * v)
\end{equation}
holds for every $\psi$ and $\varphi$ in $L^1G$ and every $v$ in $E$. By means of Theorem~\ref{commute_lineaires} and Fubini's Theorem~\ref{Fubini}, we first see that equation~\eqref{eq1:7.12} holds for every $v$ in $E$, whenever $\psi$ and $\varphi$ belong to $C_c(G)$. Using the continuity of the convolution, we then see that it still holds for general $\psi$ and $\varphi$.

\begin{rmq}\label{rmq1}
 Above, the expression $\psi * v$ has been defined by a density argument. It must be stressed that the almost everywhere defined map $t \mapsto \psi(t)(t \cdot v)$ from $G$ to a continuous Banach $G$-module $E$ is also Bochner integrable for $\psi$ in $L^1G$ and so the integral~\eqref{L1ModuleStruct} makes sense with $\psi$ in $L^1G$. The integrability can be proved by showing that the map in question is almost everywhere the pointwise limit of a sequence of simple maps.
\end{rmq}

The next result states some properties of the $L^1G$-module structure defined above. In particular, it shows how the $L^1G$-module and $G$-space structures of a continuous Banach $G$-module are compatible.

\begin{prop}\label{properties_L1_module_struc:prop}
Let $(\pi, E)$ be a continuous Banach $G$-module endowed with the $L^1G$-module structure defined above. The $G$-space structures of $E$ and $L^1G$ are compatible with the $L^1G$-module structure of $E$ in the sense that if $\delta_g$ denotes the Dirac measure at $g$ in $G$, then the equalities
\begin{equation}\label{eq1:8.12}
 \psi * (g \cdot v) = (\psi * \delta_g) * v
\end{equation}
 and
\begin{equation}\label{eq2:8.12}
 g \cdot (\psi * v) = (\delta_g * \psi) * v,
\end{equation}
hold for every $\psi$ in $L^1G$ and every $v$ in $E$. Moreover, the estimate $\|\psi * v\| \leq \|\psi\| \|v\|$ holds for every $\psi$ in $L^1G$ and every $v$ in $E$ and therefore the $L^1G$-module operations are continuous.
\end{prop}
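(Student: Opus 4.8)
The plan is to establish every assertion first for $\psi$ in $C_c(G)$, where $\psi * v$ is literally the Bochner integral~\eqref{L1ModuleStruct}, and then to pass to arbitrary $\psi$ in $L^1G$ by the density of $C_c(G)$ in $L^1G$, using that both sides of~\eqref{eq1:8.12} and~\eqref{eq2:8.12} depend continuously on $\psi$. The three ingredients imported from the preceding chapters are the norm estimate for the Bochner integral (Theorem~\ref{basic_properties_integral:thm}(b)), the commutation of the Bochner integral with bounded linear maps (Theorem~\ref{commute_lineaires}), and the behaviour of $\lambda$ under translations: left invariance for~\eqref{eq2:8.12}, and the modular-function transformation rule under right translation for~\eqref{eq1:8.12}.

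First I would dispatch the norm estimate and, with it, the last sentence of the proposition. For $\psi$ in $C_c(G)$, Theorem~\ref{basic_properties_integral:thm}(b) together with the fact that each $\pi(t)$ is an isometry gives
$$\|\psi * v\| = \Bigl\| \int_G \psi(t)\,(t \cdot v)\, d\lambda(t) \Bigr\| \le \int_G |\psi(t)|\,\|t \cdot v\|\, d\lambda(t) = \|\psi\|_1\,\|v\|.$$
For general $\psi$ in $L^1G$, choose $\varphi_n$ in $C_c(G)$ with $\varphi_n \to \psi$ in $L^1G$; then $\psi * v = \lim_n \varphi_n * v$ by the very definition of $\psi * v$, and letting $n \to \infty$ in $\|\varphi_n * v\| \le \|\varphi_n\|_1 \|v\|$ yields $\|\psi * v\| \le \|\psi\|_1 \|v\|$. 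Hence $(\psi, v) \mapsto \psi * v$ is a bounded bilinear map $L^1G \times E \to E$, so it is jointly continuous, and the remaining module operations (addition, scalar multiplication) are trivially continuous.

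Next I would treat the two compatibility identities. For $\psi$ in $C_c(G)$, apply Theorem~\ref{commute_lineaires} to the isometry $\pi(g)$ to pull it inside the integral, use $g \cdot (t \cdot v) = (gt) \cdot v$, and substitute $t \mapsto g^{-1}t$; left invariance of $\lambda$ leaves the measure untouched, so
$$g \cdot (\psi * v) = \int_G \psi(t)\,\bigl((gt) \cdot v\bigr)\, d\lambda(t) = \int_G \psi(g^{-1}t)\,(t \cdot v)\, d\lambda(t) = (\delta_g * \psi) * v,$$
because $\delta_g * \psi$ is the function $t \mapsto \psi(g^{-1}t)$; this proves~\eqref{eq2:8.12} on $C_c(G)$. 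Both sides are continuous in $\psi$ — on the left because $\psi \mapsto \psi * v$ is continuous by the estimate and $\pi(g)$ is an isometry, on the right because $\psi \mapsto \delta_g * \psi$ is an isometry of $L^1G$ and $(\cdot) * v$ is continuous — so the identity persists for all $\psi$ in $L^1G$. Equation~\eqref{eq1:8.12} is entirely analogous: for $\psi$ in $C_c(G)$ one writes $t \cdot (g \cdot v) = (tg) \cdot v$ and substitutes $t \mapsto tg^{-1}$, which now produces a modular factor; since $\psi * \delta_g$ is by definition the right translate of $\psi$ carrying exactly that factor, one gets $\psi * (g \cdot v) = (\psi * \delta_g) * v$ on $C_c(G)$, and the extension to $L^1G$ follows by continuity in $\psi$ (on the left by the estimate applied to the fixed vector $g \cdot v$, on the right because $\psi \mapsto \psi * \delta_g$ is an isometry of $L^1G$).

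The difficulty here is bookkeeping rather than conceptual: one must make sure the almost-everywhere defined integrand $t \mapsto \psi(t)(t \cdot v)$ really is Bochner integrable for $\psi$ in $L^1G$ (Remark~\ref{rmq1}), so that the displayed manipulations are not merely formal, and one must keep track of the modular function in the change of variables underlying~\eqref{eq1:8.12}; with the convolution $\psi * \delta_g$ defined to absorb precisely that factor, the two sides coincide. Everything else reduces to the density of $C_c(G)$ in $L^1G$ and the continuity statements established above.
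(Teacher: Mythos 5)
Your proof is correct and follows essentially the same route as the paper: the same change of variables with the modular factor for~\eqref{eq1:8.12}, the same use of Theorem~\ref{commute_lineaires} and left invariance for~\eqref{eq2:8.12}, and the same appeal to Theorem~\ref{basic_properties_integral:thm}(b) and the isometry of the action for the norm estimate. The only (presentational) difference is that you first prove everything on $C_c(G)$ and extend by density, whereas the paper computes directly with $\psi$ in $L^1G$, relying on Remark~\ref{rmq1} for the integrability of the almost-everywhere defined integrand.
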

\begin{proof}
 Let $\Delta$ denote the modular function of $G$. For every $\psi$ in $L^1G$ and every $g$ in $G$, we have
$$
(\psi  * \delta_g)(t) = \psi(t \inv{g})\Delta(\inv{g})
$$
almost everywhere on $G$ and so, for $v$ in $E$,
\begin{eqnarray*}
(\psi * \delta_g) * v &=& \int_G \psi(t \inv{g}) \Delta(\inv{g}) (t \cdot v) \, d\lambda(t) \\
		      &=& \int_G \psi(t g \inv{g}) \Delta(\inv{g}) \Delta(g) (t g \cdot v) \, d\lambda(t)\\
		      &=& \int_G \psi(t) [t \cdot (g v)] \, d\lambda(t) = \psi * (g\cdot v),
\end{eqnarray*}
which is exactly equation~\eqref{eq1:8.12}. As for equation~\eqref{eq2:8.12}, it is a consequence of the equality $(\delta_g * \psi) = g \cdot \psi$ and of Theorem~\ref{commute_lineaires}. In fact, by permuting the following integral with the continuous linear map $\pi(g)$, we obtain
$$
g \cdot(\psi * v) =  \int_G \psi(t) \pi(g) (t \cdot v) \, d\lambda(t) = \int_G \psi(\inv{g}t) (t \cdot v) \, d\lambda(t) = (g \cdot \psi) * v.
$$

The estimate at the end of the statement follows from the fact that $G$ acts isometrically on $E$ and from part (b) of Theorem~\ref{basic_properties_integral:thm}.
\end{proof}

\subsection*{Relative injectivity}

Let $\eta : A \rightarrow B$ be a map between two Banach spaces. For simplicity, the terminology will be as follows : the map $\eta$ is a \emph{morphism}\index{morphism} if it is linear and continuous. If $A$ and $B$ are both Banach $G$-modules, $\eta$ is said to be a $G$-\emph{morphism}\index{G-morphism@$G$-morphism} if $\eta$ is a $G$-equivariant morphism.

\begin{defn}
A Banach $G$-module $E$ is \emph{relatively injective}\index{relatively injective} if for every injective $G$-morphism $\iota : A \rightarrow B$ of continuous Banach $G$-modules and every $G$-morphism $\alpha : A \rightarrow E$ there exists a $G$-morphism $\beta : B \rightarrow E$ with $\|\beta\| \leq \|\alpha\|$ such that $\beta \circ \iota = \alpha$, whenever there exists a morphism $\sigma : B \rightarrow A$ with $ \sigma \circ \iota =  Id_A$ and $\|\sigma\| \leq 1$. In other words, the following diagram is asked to commute.
$$
\xymatrix{ A \ar@{^{(}->}[rr]_\iota \ar[rd]_\alpha && B \ar@{.>}[ld]^\beta \ar@/_1pc/[ll]_\sigma \\ & E}
$$
\end{defn}

It is worth noting that relative injectivity is a normalized condition on Banach $G$-modules. In fact, an equivalent property could have been defined by omitting the norm condition on $\sigma$ and by asking the extension $\beta$ to verify $\|\beta\| \leq \|\sigma\| \|\alpha\|$. For later use, a lemma is made out of this observation.

\begin{lem}\label{equivDefRelInj}
 A Banach $G$-module $E$ is relatively injective if and only if for every injective $G$-morphism $\iota : A \rightarrow B$ of continuous Banach $G$-modules admitting a left inverse morphism $\sigma : B \rightarrow A$ and every $G$-morphism $\alpha : A \rightarrow E$ there exists a $G$-morphism $\beta : B \rightarrow E$ with $\|\beta\| \leq \|\sigma\| \|\alpha\|$ such that $\beta \circ \iota = \alpha$.
\end{lem}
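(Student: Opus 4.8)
The plan is to prove the equivalence by treating the two implications separately: the backward implication is immediate, and the forward one is a rescaling argument applied to the embedding $\iota$.

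First I would dispatch the backward direction. Suppose $E$ has the property displayed in the lemma, and let $\iota : A \to B$ be an injective $G$-morphism of continuous Banach $G$-modules together with a morphism $\sigma : B \to A$ satisfying $\sigma \circ \iota = Id_A$ and $\|\sigma\| \le 1$, and let $\alpha : A \to E$ be a $G$-morphism. Since $\sigma$ is in particular a left inverse morphism of $\iota$, the assumed property produces a $G$-morphism $\beta : B \to E$ with $\beta \circ \iota = \alpha$ and $\|\beta\| \le \|\sigma\|\,\|\alpha\| \le \|\alpha\|$. This is precisely what the definition of relative injectivity demands, so $E$ is relatively injective.

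For the forward direction, assume $E$ is relatively injective and let $\iota : A \to B$ be an injective $G$-morphism admitting a left inverse morphism $\sigma : B \to A$, and let $\alpha : A \to E$ be a $G$-morphism. If $\sigma = 0$, then $Id_A = \sigma \circ \iota = 0$, so $A = \{0\}$ and $\beta := 0$ trivially works; hence I may assume $\sigma \neq 0$. The key idea is to apply relative injectivity not to $\iota$ itself but to the rescaled map $\iota' := \|\sigma\|\,\iota$: this is again an injective $G$-morphism between the same continuous Banach $G$-modules, and $\sigma' := \|\sigma\|^{-1}\sigma$ is a left inverse morphism of $\iota'$ with $\|\sigma'\| = 1$. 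Relative injectivity, with $\sigma'$ playing the role of the required norm-one left inverse, then yields a $G$-morphism $\beta : B \to E$ with $\beta \circ \iota' = \alpha$ and $\|\beta\| \le \|\alpha\|$. Putting $\beta'' := \|\sigma\|\,\beta$, I obtain $\beta'' \circ \iota = \|\sigma\|\,(\beta \circ \iota) = \beta \circ \iota' = \alpha$ and $\|\beta''\| = \|\sigma\|\,\|\beta\| \le \|\sigma\|\,\|\alpha\|$, which is the desired conclusion.

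There is no serious obstacle in this argument; the only points needing a moment's care are checking that $\iota'$ and $\sigma'$ are still morphisms of continuous Banach $G$-modules (clear, since a scalar multiple of a $G$-morphism is a $G$-morphism and multiplication by a nonzero scalar preserves injectivity) and handling the degenerate case $\sigma = 0$, i.e.\ $A = \{0\}$, separately as above.
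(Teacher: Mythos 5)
Your proof is correct and follows essentially the same route as the paper, which disposes of the backward direction as a special case and handles the forward direction by normalizing $\sigma$ and $\iota$ (the paper merely sketches this rescaling, while you carry it out explicitly and also treat the degenerate case $\sigma = 0$, which the paper leaves implicit).
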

\begin{proof}
 Let $E$ be a Banach $G$-module. If $E$ satisfies the condition in the statement, then it is relatively injective because relative injectivity is a particular case of this condition. Conversely, assume that $E$ is relatively injective and let an extension problem of the form
$$
\xymatrix{ A \ar@{^{(}->}[rr]_\iota \ar[rd]_\alpha && B \ar@{.>}[ld]^\beta \ar@/_1pc/[ll]_\sigma \\ & E}
$$
be given, where $\iota$ and $\alpha$ are $G$-morphisms and $\sigma$ a left inverse morphism for $\iota$. There are two simple ways to obtain the desired $\beta$. The first one is to normalize $\sigma$ and $\iota$ in order to be in the setting of relative injectivity. The second one is to change the norm on $B$ to be in the setting of relative injectivity. Both ways lead to a $G$-morphism $\beta$ such that $\beta \circ \iota = \alpha$ and $\|\beta\| \leq \|\sigma\| \|\alpha\|$.
\end{proof}

%
%
\section{Anker's argument}\label{Anker:Sect}

First of all, the terminology will be as follows. If $E$ is a Banach $G$-module and $K$ is a subset of $G$, a vector $v$ in $E$ is said to be $(K, \epsilon)$-\emph{invariant}\index{invariant vector@invariant $(K, \epsilon)$-invariant vector} if $\|g \cdot v - v\| < \epsilon$ for every $g$ in $K$. The goal of this section is to restate in a more general setting than its original one the main result of~\cite{Anker} about $\epsilon$-invariant vectors. It must be stressed that even if our result is stated in a more general setting than the one in~\cite{Anker}, our proof is almost an exact copy of the original one.

We define $S(G)$\index{S(G)@$S(G)$} to be the subset of $L^1G$ containing all the nonnegative functions whose integral is equal to $1$. That is to say
\begin{equation}\label{SG}
S(G):=\{ \psi \in L^1G : \psi \geq 0 \text{ almost everywhere and } \int_G \psi \, d\lambda = 1\}.
\end{equation}
Using this notation, Property $(P_1)$\index{P1@$P_1$} is defined as follows : for every compact subset $K$ of $G$ and for every $\epsilon > 0$, there exists a $(K, \epsilon)$-invariant function $f$ in $S(G)$. Property $(P_1^*)$\index{P1*@$P_1^*$} is defined in the same way with $K$ finite instead of compact.

The main result of~\cite{Anker} is that Properties $(P_1)$ and $(P_1^*)$ are equivalent, whenever $G$ is a locally compact topological group. For the reader familiar to the subject : the originality of Anker's paper is not the result itself, which was already known for some years, but the way it is proved. This section is named after Anker because our proof is largely the same as his.

Let us put the notations above into a more general setting. Let $X$ be a compact space and define the property $(P_1(G, X))$\index{P1 of G, X@$P_1(G, X)$} as follows : for every compact subset $K$ of $G$ and for every $\epsilon > 0$, there exists a $(K, \epsilon)$-invariant map $f$ in $C(X, L^1G)$ such that $f(X)$ is contained in $S(G)$. Property $(P_1^*(G, X))$\index{P1* of G, X@$P_1^*(G, X)$} is defined in the same way with $K$ finite instead of compact.

\begin{thm}\label{anker}
Let $G$ be a locally compact topological group. For a compact continuous $G$-space $X$, Properties $(P_1(G, X))$ and $(P_1^*(G, X))$ are equivalent.
\end{thm}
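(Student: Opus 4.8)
The implication $(P_1(G,X))\Rightarrow (P_1^*(G,X))$ is immediate: every finite subset of $G$ is compact, so a map $f\in C(X,L^1G)$ that is $(K,\epsilon)$-invariant for a compact $K$ is in particular $(F,\epsilon)$-invariant for every finite $F$, and the side condition $f(X)\subseteq S(G)$ is literally the same in the two properties. The whole content is in the converse, and the plan is to reproduce, with $C(X,L^1G)$ playing the role of $L^1G$, the argument of~\cite{Anker}.

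The structural fact that makes this possible is the norm-continuity of the diagonal action of $G$ on $C(X,L^1G)$, i.e.\ Corollary~\ref{diag_action_L^1_continuous:cor}. It guarantees that for each $f$ in the convex set $\mathcal F:=\{f\in C(X,L^1G):f(X)\subseteq S(G)\}$ the assignment $g\mapsto g\cdot f-f$ is a continuous map $G\to C(X,L^1G)$, so that $g\mapsto\|g\cdot f-f\|$ is continuous and $\sup_{g\in K}\|g\cdot f-f\|$ is a sensible object. In these terms $(P_1^*(G,X))\Rightarrow(P_1(G,X))$ asserts exactly that $\inf_{f\in\mathcal F}\sup_{g\in K}\|g\cdot f-f\|=0$ for every compact $K\subseteq G$, granted that this holds for every finite $K$.

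I would prove this in two steps, following Day and Namioka. First, from $(P_1^*(G,X))$ one extracts an invariant ``mean'': choosing $f_F\in\mathcal F$ that is $(F,1/|F|)$-invariant for each finite $F\subseteq G$, the net $\{f_F\}$ lies in the unit ball of $C(X,L^1G)$, so its canonical image in $C(X,L^1G)^{**}$ has a weak-$*$ cluster point $m$; since $\|g\cdot f_F-f_F\|\to 0$ for each fixed $g$ and the $G$-action on the bidual is weak-$*$ continuous, $m$ is fixed by $G$. Second, given a compact $K$, consider the convex subset $\mathcal C_K:=\{\,(g\cdot f-f)_{g\in K}:f\in\mathcal F\,\}$ of the Banach space $C\!\left(K,\,C(X,L^1G)\right)$; invariance of $m$ should force $0$ into the \emph{weak} closure of $\mathcal C_K$, and since a convex set has the same weak and norm closures, $0$ then lies in the norm closure of $\mathcal C_K$, which is precisely $(P_1(G,X))$ at $K$. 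The transposition from $L^1G$ to $C(X,L^1G)$ costs nothing here: $\mathcal F$, the diagonal action, its continuity, and the relevant adjoint computations all behave identically.

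The genuinely delicate point --- and the reason this is a theorem of Anker rather than a formality --- is the inner step of the last paragraph, namely that invariance of $m$ puts $0$ in the weak closure of $\mathcal C_K$: a functional on $C(K,C(X,L^1G))$ pairs with $\mathcal C_K$ by integrating displacements $g\cdot f-f$ against a $C(X,L^1G)^*$-valued regular measure $\mu$ on $K$, and exploiting $g\cdot m=m$ requires pulling $m$ under that $K$-integral, which is illegitimate for a general mean since $m$ is not weak-$*$ continuous. I expect the fix to run exactly as in~\cite{Anker}: one uses the continuity of the $G$-action on $C(X,L^1G)$ (Corollary~\ref{diag_action_L^1_continuous:cor}) together with the compactness of $K$ to reduce the pairing to integrands for which the interchange is valid --- intuitively, approximating $\mu$ by finitely supported measures in a way controlled \emph{uniformly over} $\mathcal F$ --- so that the lower bound one is trying to defeat becomes a statement about finitely many points of $K$, where $(P_1^*(G,X))$ applies directly. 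Everything else in the argument is routine verification.
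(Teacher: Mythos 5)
There is a genuine gap, and you have located it yourself: the entire content of the theorem is the passage from invariance over finite sets to invariance over compact sets, and that is precisely the step you defer with ``I expect the fix to run exactly as in Anker.'' Your Day--Namioka skeleton (extract an invariant mean $m$ in $C(X,L^1G)^{**}$, then use Mazur to convert weak closure into norm closure of the convex set $\mathcal C_K\subseteq C(K,C(X,L^1G))$) correctly handles the soft parts, but it reduces the theorem to exactly the claim that invariance of $m$ forces $0$ into the weak closure of $\mathcal C_K$ --- and, as you note, the interchange of $m$ with the integral over $K$ against a $C(X,L^1G)^*$-valued measure is illegitimate. No argument is supplied for the ``fix,'' so the proposal proves only the trivial implication plus a reformulation of the hard one.

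For comparison, the paper's proof does not pass through the bidual or Mazur's theorem at all. It is a self-contained convolution argument (Proposition~\ref{anker:prop}): given a compactly supported $\psi\in S(G)$ and a family of $(F,\epsilon)$-invariant unit vectors, one shows that $\psi* v$ is $(K,\epsilon)$-invariant for a suitably chosen finite $F$. The mechanism is that $g\mapsto\delta_g*\psi$ is \emph{norm}-continuous in $L^1G$ (Proposition~\ref{continue_sur_L1:prop}), so after smoothing by $\psi$ the orbit map becomes controllable: one covers a compact set $M\supseteq K\cdot\mathrm{supp}\,\psi$ by finitely many small pieces $M_i$ with $M_iM_i^{-1}$ contained in a fixed neighbourhood of $1_G$, picks one representative $g_i$ in each, sets $F=\{g_1,\dots,g_n\}$, and estimates $\|g\cdot(\psi*v)-\psi*v\|$ by a chain of terms each bounded by $\eta=\epsilon/10$ using an auxiliary bump $\alpha\in S(G)$ and the $(F,\eta)$-invariance of $v$. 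The subsequent proof of Theorem~\ref{anker} then only checks that for $E=C(X,L^1G)$ the smoothed maps $\psi*f_{F,\epsilon}$, after an explicit normalization, still range in $S(G)$. This is the quantitative reduction from $K$ to a finite set that your outline needs but does not contain; if you want to complete your route, you would in effect have to import this same convolution estimate, at which point the bidual and Mazur steps become superfluous.
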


The proof is split into two parts. The first one is a proposition that applies to continuous Banach $G$-modules and gives a general construction to obtain $\epsilon$-invariant vectors under the action of a compact subset $K$ of $G$. The second one is the proof itself : a closer look at the construction made in the proposition will be taken in the case where the Banach $G$-module is $C(X, L^1G)$.

Our development will more or less fit to the detail level of~\cite{Anker}. In particular, the existence of some subsets of $G$ having some nice (however not miraculous) properties will be used without any proof.

\begin{prop}\label{anker:prop}
 Let $E$ be a continuous Banach $G$-module and assume that for every finite subset $F$ of $G$ and every $\epsilon >0$ there exists a $(F, \epsilon)$-invariant vector $v_{F, \epsilon}$ with $\|v\| =1$. For every $\psi$ in $S(G)$ with compact support the set $\{\psi * v_{F, \epsilon} : \epsilon >0, F \subseteq G \text{ finite}\}$ contains a $(K, \epsilon)$-invariant vector for every compact subset $K$ of $G$.
\end{prop}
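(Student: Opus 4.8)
The plan is to fix $\psi\in S(G)$ with compact support $L$, a compact set $K\subseteq G$ and $\epsilon>0$, and to produce a finite set $F\subseteq G$ and a $\delta>0$ such that $\psi*v_{F,\delta}$ is $(K,\epsilon)$-invariant; this vector then lies in the set of the statement. Throughout I would use, from Proposition~\ref{properties_L1_module_struc:prop}, that $G$ acts isometrically on $E$, that $\|w*v\|\leq\|w\|_1\,\|v\|$ for $w\in L^1G$ and $v\in E$, and the compatibility equalities~\eqref{eq1:8.12} and~\eqref{eq2:8.12} (together with $\delta_g*\psi=g\cdot\psi$), as well as the linearity of $w\mapsto w*v$ and of $v\mapsto\psi*v$.

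First I would reduce the problem to finitely many single elements of $G$. By Proposition~\ref{continue_sur_L1:prop} one can pick a neighbourhood $V$ of $1_G$ with $\|u\cdot\psi-\psi\|_1<\epsilon/2$ for all $u\in V$, and by compactness elements $g_1,\dots,g_m\in K$ with $K\subseteq g_1V\cup\cdots\cup g_mV$. If $v$ is a unit vector and $g=g_ju\in K$ with $u\in V$, then $g\cdot(\psi*v)=g_j\cdot\big(u\cdot(\psi*v)\big)$ and, by~\eqref{eq2:8.12}, $u\cdot(\psi*v)-\psi*v=(u\cdot\psi-\psi)*v$; hence, using isometry,
$$
\|g\cdot(\psi*v)-g_j\cdot(\psi*v)\|=\|u\cdot(\psi*v)-\psi*v\|\leq\|u\cdot\psi-\psi\|_1<\epsilon/2.
$$
So $\|g\cdot(\psi*v)-\psi*v\|<\epsilon/2+\max_j\|g_j\cdot(\psi*v)-\psi*v\|$, and it is enough to find $F$ finite and $\delta>0$ with $\|g_j\cdot(\psi*v_{F,\delta})-\psi*v_{F,\delta}\|<\epsilon/2$ for $j=1,\dots,m$.

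Next, for a fixed $j$, I would exploit a conjugation identity. From $g_j\cdot(\psi*v)=(g_j\cdot\psi)*v=\int_G\psi(g_j^{-1}t)(t\cdot v)\,d\lambda(t)$, the substitution $t=g_js$ (left invariance of $\lambda$) and the factorisation $g_js=s\,(s^{-1}g_js)$ yield
$$
g_j\cdot(\psi*v)-\psi*v=\int_G\psi(s)\;s\cdot\big[(s^{-1}g_js)\cdot v-v\big]\,d\lambda(s),
$$
and therefore, by Theorem~\ref{basic_properties_integral:thm}(b) and isometry,
$$
\|g_j\cdot(\psi*v)-\psi*v\|\leq\int_G\psi(s)\,\big\|(s^{-1}g_js)\cdot v-v\big\|\,d\lambda(s)=\int_G\|h\cdot v-v\|\,d\nu_j(h),
$$
where $\nu_j$ is the Borel probability measure on the compact set $L^{-1}g_jL$ obtained by pushing $\psi\,d\lambda$ forward along $s\mapsto s^{-1}g_js$. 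I would then approximate $\nu_j$ by a finitely supported probability measure $\sum_k b_k^{(j)}\delta_{h_k^{(j)}}$, got by cutting $\mathrm{supp}\,\nu_j$ into small pieces and choosing a representative $h_k^{(j)}$ in each, and I would let $F$ contain all the $h_k^{(j)}$. For a unit vector $v$ this produces
$$
\|g_j\cdot(\psi*v)-\psi*v\|\leq\sum_k b_k^{(j)}\,\|h_k^{(j)}\cdot v-v\|\;+\;r_j(v),
$$
with a remainder $r_j(v)$ coming from the oscillation of $s\mapsto\|(s^{-1}g_js)\cdot v-v\|$ across each piece, bounded by a quantity of the form $\sup_{w\in W}\|w\cdot v-v\|$ for a neighbourhood $W$ of $1_G$ that shrinks as the partition is refined.

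The hard part is precisely the control of $r_j(v)$: once $F\supseteq\{h_k^{(j)}\}$ the sum $\sum_k b_k^{(j)}\|h_k^{(j)}\cdot v-v\|$ is made smaller than $\epsilon/4$ by taking $v=v_{F,\delta}$ with $\delta$ small, but bounding $\|w\cdot v-v\|$ for $w$ in a whole neighbourhood of $1_G$ does not follow from finite invariance of $v$. Following~\cite{Anker}, I would remove this obstruction by enlarging $F$ in a careful way, adjoining to it suitable finite subsets of $G$ enjoying convenient covering (``net'') properties relative to $L$, to $L^{-1}g_jL$ and to a neighbourhood of $1_G$, so that the $(F,\delta)$-invariance of $v_{F,\delta}$ does bound $r_j(v)$; the existence of such finite sets would be invoked without proof, as the surrounding text announces. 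Granting this, choose $F$ accordingly and then $\delta>0$ small enough that $\|g_j\cdot(\psi*v_{F,\delta})-\psi*v_{F,\delta}\|<\epsilon/2$ for every $j$; combined with the first reduction, $\psi*v_{F,\delta}$ is $(K,\epsilon)$-invariant, which finishes the proof.
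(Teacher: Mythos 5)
Your set-up is sound up to and including the conjugation identity: the reduction of $K$ to finitely many $g_j$ via $\|u\cdot\psi-\psi\|_1<\epsilon/2$ on a neighbourhood $V$, and the rewriting
$g_j\cdot(\psi*v)-\psi*v=\int_G\psi(s)\,s\cdot[(s^{-1}g_js)\cdot v-v]\,d\lambda(s)$,
are both correct. The gap is exactly where you place it, in the remainder $r_j(v)$, but your proposed repair does not work and is not what Anker (or the paper) does. Bounding the oscillation of $s\mapsto\|(s^{-1}g_js)\cdot v-v\|$ over a piece of the partition requires a bound on $\sup_{w\in W}\|w\cdot v-v\|$ for a neighbourhood $W$ of $1_G$. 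Continuity of the module gives such a bound for each \emph{fixed} $v$, but the admissible $W$ depends on $v$, while in your scheme $W$ (hence the partition, hence $F$) must be chosen \emph{before} $v=v_{F,\delta}$. No enlargement of the finite set $F$ can break this circularity: $(F,\delta)$-invariance controls finitely many group elements and says nothing about a whole neighbourhood of $1_G$; the family $\{v_{F,\delta}\}$ has no uniform modulus of continuity for its orbit maps. The finite sets ``invoked without proof'' in the paper are not nets taming $r_j(v)$; they are a Borel partition $M=\bigcup_i M_i$ with $M_iM_i^{-1}\subseteq W$, and they serve a purely $L^1$ purpose.

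The actual mechanism you are missing is the mollifier. The paper fixes $\alpha\in S(G)$ supported in a small neighbourhood of $1_G$ and estimates $\|g\cdot(\psi*v)-\psi*v\|$ through the chain
$$
\delta_g*\psi*v\ \approx\ \alpha*\delta_g*\psi*v\ \approx\ \alpha*v\ \approx\ \alpha*\psi*v\ \approx\ \psi*v .
$$
The first and last comparisons cost only $\|\alpha*\delta_g*\psi-\delta_g*\psi\|_1$, an estimate internal to $L^1G$. For the middle comparison one writes $\alpha*\varphi*v=\int_G\varphi(t)\,[\alpha*(t\cdot v)]\,d\lambda(t)$ with $\varphi=\delta_g*\psi$ and splits the integral over $G\setminus M$ and the pieces $M_i$; the oscillation over $M_i$ now enters only through $\|\alpha*\delta_t-\alpha*\delta_{g_i}\|_1\leq\eta$ for $t\in M_i$, which holds uniformly because right translation is continuous on $L^1G$ --- a property of $L^1G$ alone, independent of $E$ and of $v$. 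The $(F,\eta)$-invariance of $v$ with $F=\{g_1,\dots,g_n\}$ is then used only to compare $\alpha*(g_i\cdot v)$ with $\alpha*v$. In short: the uniform-over-a-neighbourhood control is transferred from the module $E$ (where it is unavailable) to $L^1G$ (where it is free), and that transfer is the content of the proposition. As written, your proof does not go through.
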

\begin{proof}
 Let $P:=\{v_{F, \epsilon} : F \subseteq G \text{ finite and } \epsilon >0\}$ be a set of $(F, \epsilon)$-invariant vectors in $E$ of norm one. Let $K$ be a compact subset of $G$ and fix $\epsilon >0$. Finally, choose any $\psi$ in $S(G)$ with compact support. We will show that there is a $v$ in $P$ such that $\psi * v$ is $(K, \epsilon)$-invariant. Notice that the result will also be proved if there exists a $v$ in $P$ such that $\psi * v$ is $(K \cup \{1_G\}, \epsilon)$-invariant. It can therefore be assumed without loss of generality that $K$ contains $1_G$.

Set $\eta := \epsilon /10$. By Proposition~\ref{continue_sur_L1:prop}, there exists a neighborhood $V'$ of $1_G$ such that 
\begin{equation}\label{eq1:8.12.PM}
\|\delta_g * \psi - \psi\|_1 = \|g \cdot \psi - \psi\|_1 \leq \eta \quad (g \in V').
\end{equation}
If we define $V:=\bigcap_{g \in K} g V' \inv g$, then $V$ is still a neighborhood of $1_G$ and by definition of $V$, it has the property that
\begin{equation}\label{eq2:8.12.PM}
 \sup_{h \in \inv{g} V g} \|\delta_h * \psi - \psi\|_1 \leq \eta \quad (g \in K).
\end{equation}
Let $\alpha$ be a function in $S(G)$ whose support is contained in $V$. Since
$$
	\alpha * \delta_g * \psi (z) - \delta_g * \psi (z) = \int_G \alpha (t)
[( \delta_{tg} * \psi )(  z)  - ( \delta_g
* \psi)(z) ]\, d\lambda(t),
$$
it follows from equation~(\ref{eq2:8.12.PM}) that the following estimate holds :
\begin{eqnarray}\label{eq1:9.12}
\|\alpha * \delta_g * \psi - \delta_g * \psi \|_1 	
&\leq& 	\|\alpha\|_1 \sup_{h \in \inv{g} V g} \|\delta_h * \psi - \psi\|_1 \leq \eta \quad (g \in K).
\end{eqnarray}

Now, let $M'$ be a compact subset of $G$ such that the integral of $\psi$ on $M'^c$ is less or equal to $\eta$ and define $M :=KM'$. Since $gM'$ is contained in $M$ for every $g$ in $K$, the set $\inv{g} M$ contains $M'$ for every $g$ in $K$ and therefore
\begin{equation}\label{eq2:9.12}
\int_{G \backslash \inv{g} M} \psi \, d\lambda \leq \int_{G \backslash M'} \psi \, d\lambda \leq \eta, \quad (g \in K).
\end{equation}
There exists a compact neighborhood $W$ of $1_G$ such that $\|\alpha * \delta_g -\alpha\|_1 \leq \eta$ for every $g$ in $W$. Let $A$ be another neighborhood of $1_G$, but this time an open one, such that $A \inv{A}$ is contained in $W$. By compactness of the set $M$ defined above, there exist $m_1, \dots, m_n$ in $M$ such that
$$
M \subseteq Am_1 \cup \cdots \cup A m_n.
$$
If for every $i$ in $\{1, \dots, n\}$ we define $M_i := A m_i$, then $W$ contains all of the $M_i \inv{M_i}$'s. This shows that up to a basic modification of the $M_i$'s we can assume that for every $i$ and $j$ in $\{1, \cdots, n\}$ they verify
$$
M_i \in \mathcal{B}(G), \quad M = \bigcup_{k=1}^n M_k, \quad M_i \cap M_j = \emptyset \text{ for every } i\neq j \quad \text{ and } \quad M_i \inv{M_i} \subseteq W.
$$
Moreover, up to letting go some of the $M_i$'s, we can assume that they are nonempty. If for every $i$ in $\{1, \cdots, n\}$ a $g_i$ is chosen in $M_i$, then we have
\begin{equation}\label{eq1:10.12}
 \|\alpha * \delta_t - \alpha * \delta_{g_i} \|_1 = \| (\alpha * \delta_{t \inv{g_i}}- \alpha) * \delta_{g_i}\|_1 = \|\alpha * \delta_{t \inv{g_i}}- \alpha\|_1 \leq \eta \quad (t \in M_i),
\end{equation}
by definition of $W$ and the fact that $M_i \inv{M_i}$ is contained in $W$.

 Define $F :=\{g_1, \dots, g_n\}$ and let $v$ be a $(F, \eta)$-invariant vector of norm one. We claim that $\psi * v$ is $(K, \epsilon)$-invariant. This is true because the right-hand side terms of the inequality
\begin{multline}\label{eq:theone.10.12}
 \|g \cdot (\psi * v) - \psi * v\| \leq \|\delta_g * \psi * v - \alpha * \delta_g * \psi * v\| + \|\alpha * \delta_g * \psi * v - \alpha * v\|\\
+\|\alpha  * v - \alpha *\psi * v\| + \| \alpha *\psi * v - \psi * v\|
\end{multline}
will shortly be seen to be sufficiently small whenever $g$ belongs to $K$. First, notice that the last two terms in the right-hand side are nothing but the first two ones with $g$ equal to $1_G$ and therefore, $1_G$ belonging to $K$, it is enough to obtain a suitable bound for the first two terms.

Using part (b) of Theorem~\ref{basic_properties_integral:thm} and equation~\eqref{eq1:9.12}, we have
$$
\|\delta_g * \psi * v - \alpha * \delta_g * \psi * v\| \leq \|\delta_g * \psi  - \alpha * \delta_g * \psi \|_1 \|v\| \leq \eta, \quad (g \in K)
$$
and we are done with the first term. Let us tackle the second one. For simplicity, write $\varphi$ instead of $\delta_g * \psi$. Since the map $(s, t) \mapsto \alpha(s) \varphi(\inv{s} t) (t \cdot v)$ from $G \times G$ to $V$ has compact support, Fubini's theorem for the Bochner integral can be used to obtain
$$
\alpha * \varphi * v = \int_G \varphi(t) [\alpha * (t \cdot v)] \, d\lambda(t).
$$
This equality, together with the fact that $\varphi$ belongs to $S(G)$, leads to the inequality
$$
\|\alpha * \varphi * v - \alpha * v\|	 \leq 	\int_G |\varphi(t)| \, \|\alpha* (t \cdot v) - \alpha * v\| \, d\lambda(t),
$$
whose right-hand side is less or equal to
\begin{multline*}
\int_{G \backslash M} |\varphi(t)| \, \|\alpha* (t \cdot v) - \alpha * v\| \, d\lambda(t) + \sum_{i=1}^n \int_{M_i} |\varphi(t)| \, \|\alpha* (t \cdot v) - \alpha * (g_i \cdot v)\| \, d\lambda(t) \\
+  \sum_{i=1}^n \int_{M_i} |\varphi(t)| \, \|\alpha * (g_i \cdot v) - \alpha * v\| \, d\lambda(t).
\end{multline*}
We claim that this cumbersome expression is in fact less or equal to $4 \eta$. To see it, keep in mind that $v$ is $(F, \eta)$-invariant and use the estimates
$$
\|\alpha* (t \cdot v) - \alpha * v\| \leq \|\alpha\| (\|t \cdot v\| + \|v\|) \leq 2
\quad \text{ and }  \quad \|\alpha * (t \cdot v) - \alpha * (g_i \cdot v)\| \leq \|\alpha * \delta_t - \alpha * \delta_{g_i}\|
$$
together with equations~\eqref{eq2:9.12} and~\eqref{eq1:10.12}.

Gathering all the estimates obtained by now, we have $\|g \cdot (\psi * v) - \psi * v\| \leq 10 \eta = \epsilon$ for every $ g$ in $K$.
\end{proof}

We now turn to the proof of Theorem~\ref{anker}. It will essentially consist in making some simple observations because all the arduous work was in the above proposition.

\begin{proof}[Proof of Theorem~\ref{anker}]
 First of all, it is clear that $(P_1^*(G, X))$ holds, whenever the group $G$ satisfies property $(P_1(G, X))$.

In order to prove the other implication, assume that Property $(P_1^*(G, X))$ is verified and let $\{f_{F, \epsilon}  : F \subseteq G \text{ finite and } \epsilon > 0\}$ be a set of $(F, \epsilon)$-invariant maps in $C(X, L^1G)$ ranging in $S(G)$. In particular, all of the $f_{F, \epsilon}$'s are of norm 1 and so the continuous Banach $G$-module $C(X, L^1G)$ (see Corollary~\ref{diag_action_L^1_continuous:cor}) satisfies the hypothesis of Proposition~\ref{anker:prop}.

Thus, by Proposition~\ref{anker:prop}, we know that if $\psi$ is any function in $S(G)$ with compact support, then the set
$$
\{\psi * f_{F, \epsilon} : F \subseteq G \text{ finite and } \epsilon > 0\}
$$
contains a $(K, \epsilon)$-invariant vector for every compact subset $K$ of $G$. Let $Z$ be a compact subset of $G$ with positive measure and define a function $\psi$ in $S(G)$ by $\psi : = \indicc{Z} / \lambda(Z)$.

Now, some observations must be made. Assume that $f$ belongs to $C(X, L^1G)$ and ranges in $S(G)$. By Theorem~\ref{commute_lineaires} applied to the evaluation-at-$x$ linear map, we have
\begin{equation}\label{eq3:10.12}
 (\psi * f)(x)= \int_Z t \cdot f(\inv{t}x) \, d\lambda(t) = \lim_{n \to \infty}\sum_{i=1}^{b(n)} \lambda (A_{i, n}) (t_{i, n} \cdot f(\inv{t_{i, n}} x))
\end{equation}
where for every integer $n$ the set $\{A_{1, n}, \dots, A_{b(n), n}\}$ is a partition of $Z$ into measurable subsets of $G$. Since the $L^1$-norm is additive on $S(G)$, it follows from~\eqref{eq3:10.12} that
$$
\|(\psi * f)(x)\|_1 = \lim_{n \to \infty} \left\|\sum_{i=1}^{b(n)} \lambda (A_{i, n}) (t_{i, n} \cdot f(\inv{t_{i, n}} x))\right\|_1 =\lambda(Z),
$$
for every $x$ in $X$. Moreover, the maps ranging in $S(G)$ are contained in the positive cone of the Banach lattice $C(X, L^1G)$ (see Example~\ref{C(X, L1)_lattice:exple}). The latter being closed, equation~\eqref{eq3:10.12} implies that $\psi * f$ is positive.

These observations imply that if the normalized maps $\tilde{f}_{F, \epsilon}$ are defined by
$$
\tilde{f}_{F, \epsilon}(x) := \frac{(\psi * f_{F, \epsilon})(x)}{\lambda(Z)}, \quad (x \in X),
$$
then the set $\{\tilde{f}_{F, \epsilon} : F \subseteq G \text{ finite and } \epsilon >0\}$ contains a $(K, \epsilon)$-invariant map for every compact $K$ of $G$ and every $\epsilon >0$. Moreover, every map in this set ranges in $S(G)$. This leads to the end of the proof.
\end{proof}

\chapter{Topological amenability}\label{result:Ch}
  Let $G$ be a discrete group acting by homeomorphisms on a compact space $X$. In~\cite{AVENIR}, it is proved (among other things) that the following statements are equivalent.
\begin{itemize}
 \item[(a)]The action of $G$ on $X$ is topologically amenable.
  \item[(b)]Every dual $(G, X)$-module of type C is a relatively injective Banach $G$-module.
  \item[(c)]There is a $G$-invariant element in $C(X, l^1G)^{**}$ summing to $\indicc{X}$.
  \item[(d)]There is a norm one positive $G$-invariant element in $C(X, l^1G)^{**}$ summing to $\indicc{X}$.
\end{itemize}

The main goal of this master thesis was to generalize the result to general locally compact topologies on $G$. In other words and more precisely, we wanted to establish a similar result under the hypothesis that $G$ is a locally compact group acting continuously on a compact space $X$. It will be proved in this chapter that in the described locally compact setting, the statements above remain equivalent whenever we replace $l^1G$ by $L^1G$.

\section{Amenable transformation groups}

This section is almost a copy of Section 2 of~\cite{Delaroche}. Transformation groups and amenable transformation groups are defined and several equivalent definitions we will work with are given. We recall that all the following topological spaces have the Hausdorff property.

\begin{defn}
 A \emph{transformation group}\index{transformation group} is a left $G$-space $X$, where $X$ is a locally compact space and $G$ a locally compact group acting continuously from the left on $X$.
\end{defn}

 We will denote by $\mathrm{Prob}(G)$\index{Prob G@$\mathrm{Prob}(G)$} the set of probability measures on the Borel subsets of $G$. Topologically, the set $\mathrm{Prob}(G)$ will be seen as a subset of $C_0 (G)^*$\index{C 0 G@$C_0(G)$} and be endowed with the induced weak-* topology. In other words, a probability measure is seen as continuous linear functional on the space of continuous functions on $G$ vanishing at infinity. In this setting, $G$ acts on $\mathrm{Prob}(G)$ through its action on $C_0 (G)$ : for $g$ in $G$ and $m$ in $\mathrm{Prob}(G)$, we define $gm(f) :=m(\inv{g}f)$ for every $f$ in $C_0(G)$.

\begin{defn}
 A transformation group $(X, G)$ is \emph{amenable}\index{amenable transformation group} if there exists a net $\{m_n\}_{n \in N}$ of continuous maps $x \mapsto m_n^x$ from $X$ to $\mathrm{Prob}(G)$ such that
$$
\lim_n \|g m_n^x - m_n^{gx}\|_1 = 0
$$
uniformly on compact subsets of $X \times G$.
\end{defn}

The following proposition and proof correspond to Proposition 2.2 in~\cite{Delaroche}. 

\begin{prop}\label{equiv_def_amenability}
 The following conditions are equivalent.
\begin{itemize}
 \item[(a)]$(X, G)$ is an amenable transformation group.
  \item[(b)]There exists a net $\{f_n\}_{n \in N}$ of nonnegative continuous functions on $X \times G$ such that
	    \begin{itemize}
		\item[(1)]for every $n$ in $N$ and $x$ in $X$, $\int_G f_n(x, t) \, d\lambda(t) = 1$;
		\item[(2)]$\lim_n \int_G |f_n(gx, gt) - f_n(x, t)| \, d\lambda(t)=0$ uniformly on compact subsets of $X \times G$.
	     \end{itemize}
  \item[(c)]There exists a net $\{f_n\}_{n \in N}$ in $C_c(X \times G)^+$ such that
	    \begin{itemize}
		\item[(1)]$ \lim_n \int_G f_n(x, t) \, d\lambda(t) = 1$ uniformly on compact subsets of $X$;
		\item[(2)]$\lim_n \int_G |f_n(gx, gt) - f_n(x, t)| \, d\lambda(t)=0$ uniformly on compact subsets of $X \times G$.
	     \end{itemize}
\end{itemize}
\end{prop}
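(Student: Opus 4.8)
The plan is to establish the two equivalences (a)$\Leftrightarrow$(b) and (b)$\Leftrightarrow$(c). Fix once and for all a function $\varphi\in C_c(G)^+$ with $\int_G\varphi\,d\lambda=1$; it will play two roles, as a smoothing kernel and as a normalising ``background density''. I will repeatedly use that left translating an $L^1$-density on $G$ introduces no modular factor (because $\lambda$ is \emph{left} Haar), that for a finite measure $\mu$ on $G$ the measure $\mu*(\varphi\,d\lambda)$ is absolutely continuous with density $t\mapsto\int_G\varphi(s^{-1}t)\,d\mu(s)$ and $\|\mu*(\varphi\,d\lambda)\|_1\le\|\mu\|\,\|\varphi\|_1$, and that $g\cdot(\mu*\nu)=(g\mu)*\nu$. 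These identities do the heavy lifting for the first two implications.

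For (a)$\Rightarrow$(b), given the net $\{m_n\}$ I set $f_n(x,t):=\int_G\varphi(s^{-1}t)\,dm_n^x(s)$, so that $f_n(x,\cdot)\,d\lambda=m_n^x*(\varphi\,d\lambda)$. Joint continuity of $f_n$ on $X\times G$ splits into a $t$-variation bounded by $\sup_s|\varphi(s^{-1}t)-\varphi(s^{-1}t_0)|$ (small by right uniform continuity of $\varphi\in C_c(G)$) and an $x$-variation $\bigl|\int_G\varphi(s^{-1}t_0)\,d(m_n^x-m_n^{x_0})(s)\bigr|$ (small by weak-$*$ continuity of $x\mapsto m_n^x$, since $s\mapsto\varphi(s^{-1}t_0)\in C_c(G)$); condition (b)(1) is immediate from left invariance of $\lambda$ and $m_n^x(G)=1$; and a substitution $s=gt$ gives $\int_G|f_n(gx,gt)-f_n(x,t)|\,d\lambda(t)=\|(m_n^{gx}-gm_n^x)*(\varphi\,d\lambda)\|_1\le\|m_n^{gx}-gm_n^x\|_1$, which tends to $0$ uniformly on compacts of $X\times G$ by (a). Conversely, for (b)$\Rightarrow$(a) I put $m_n^x:=f_n(x,\cdot)\,d\lambda$, a probability measure by (b)(1); weak-$*$ continuity of $x\mapsto m_n^x$ follows from joint continuity of $f_n$ on compacts of $G$ (tested against $C_c(G)$) together with the uniform bound $\|m_n^x\|=1$ (to pass to $C_0(G)$), and $\|gm_n^x-m_n^{gx}\|_1=\int_G|f_n(x,g^{-1}t)-f_n(gx,t)|\,d\lambda(t)=\int_G|f_n(x,s)-f_n(gx,gs)|\,d\lambda(s)\to0$ uniformly on compacts by (b)(2).

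For (c)$\Rightarrow$(b), write $c_n(x):=\int_Gf_n(x,t)\,d\lambda(t)$; this is continuous and tends to $1$ uniformly on compacts of $X$. I index the new net by $N\times(0,\infty)$ with $(0,\infty)$ reversed and set $\widetilde f_{n,\varepsilon}(x,t):=\bigl(f_n(x,t)+\varepsilon\varphi(t)\bigr)\big/\bigl(c_n(x)+\varepsilon\bigr)$: this is nonnegative, continuous (denominator $\ge\varepsilon>0$), and integrates in $t$ to exactly $1$. Writing $\widetilde f_{n,\varepsilon}(gx,gt)-\widetilde f_{n,\varepsilon}(x,t)$ as a numerator-difference term divided by $c_n(gx)+\varepsilon$ plus $\bigl(f_n(x,t)+\varepsilon\varphi(t)\bigr)$ times a difference of reciprocals, the $t$-integrals of the two pieces are at most $\bigl(c_n(gx)+\varepsilon\bigr)^{-1}\bigl(\int_G|f_n(gx,gt)-f_n(x,t)|\,d\lambda(t)+2\varepsilon\bigr)$ and $|c_n(x)-c_n(gx)|\big/\bigl(c_n(gx)+\varepsilon\bigr)$; both tend to $0$ on compacts as $n\to\infty$, $\varepsilon\to0$, using (c)(2) and the uniform convergence $c_n\to1$.

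The implication (b)$\Rightarrow$(c) is the main obstacle. I would first show that for each fixed $n$ the map $x\mapsto f_n(x,\cdot)$ is continuous from $X$ into $L^1(G)$: joint continuity of $f_n$ gives convergence uniform on compacts of $G$ along any net $x_\alpha\to x$, and the exact normalisation $\int_Gf_n(x_\alpha,\cdot)\,d\lambda=\int_Gf_n(x,\cdot)\,d\lambda=1$ upgrades this to $L^1$-convergence by a Scheffé-type $\limsup$ argument (this step is genuinely net-sensitive and is precisely where the strength of (b)(1) is used). Hence $\{f_n(x,\cdot):x\in L\}$ is compact, so uniformly tight, in $L^1(G)$ for every compact $L\subseteq X$. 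Then, choosing cutoffs $\psi_L\in C_c(X)$ and $\chi_K\in C_c(G)$ with values in $[0,1]$ equal to $1$ on $L$ (resp. $K$) and putting $g_{n,L,K}(x,t):=\psi_L(x)\chi_K(t)f_n(x,t)\in C_c(X\times G)^+$, one recovers (c)(1) on a compact $L_0$ from $\int_Gg_{n,L,K}(x,\cdot)\,d\lambda\ge1-\sup_{x\in L_0}\int_{G\setminus K}f_n(x,\cdot)\,d\lambda\to1$, and (c)(2) from the fact that $|\chi_K(gt)-\chi_K(t)|$ is supported off a compact exhausting $G$; both rely on uniform tightness. The real point requiring care is ordering the three-parameter net $(n,L,K)$ so that the $G$-truncation $K$ always grows fast enough relative to $n$ and to the compact subset of $X$ under consideration, since tightness is only uniform in $x$ for fixed $n$; the other three implications are essentially formal once the convolution identities above are in place.
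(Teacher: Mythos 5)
Your proof is correct and follows essentially the same route as the paper: the same kernel $f_n(x,t)=\int_G\varphi(s^{-1}t)\,dm_n^x(s)$ for (a)$\Rightarrow$(b), the same densities $f_n(x,\cdot)\,d\lambda$ for (b)$\Rightarrow$(a), and the same regularisation $(f_n+\varepsilon\varphi)/(c_n+\varepsilon)$ for (c)$\Rightarrow$(b). For (b)$\Rightarrow$(c) the paper only invokes ``an easy approximation argument,'' whereas you supply the actual truncation-and-tightness argument (correctly, including the Scheff\'e step for nets); the net-reindexing issue you flag is real but is resolved by the standard device of indexing the approximants by pairs (compact set, $\varepsilon$) rather than by the raw triple $(n,L,K)$.
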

\begin{proof}
 (a) $\Rightarrow$ (b). Let $f$ belong to $C_c(G)^+$ be such that $\int_G f(t) \, d\lambda(t) = 1$ and set
$$
f_n(x, g) = \int_G f(\inv{t} g) \, dm_n^x(t).
$$
By the Fubini theorem, we get $\int_G f_n(x, s) \, d\lambda(s) = 1$ for every $x$ in $X$ and $n$ in $N$. Moreover, for $(x, g)$ in $X\times G$, the invariance of the left Haar measure implies that
\begin{eqnarray*}
 \|\inv{g}f_n(gx, \cdot) - f_n(x, \cdot)\|_1&=& \int_G \left| \int_G f(\inv{u} gt) \, dm_n^{gx}(u) - \int_G f(\inv{u} t) \, dm_n^x(u)\right| \, d\lambda(t)\\
&\leq& \int_G \int_G f(\inv{u} g t) \, d|m_n^{gx}-gm_n^x|(u) \, d\lambda(t),
\end{eqnarray*}
where $|m_n^{gx}-gm_n^x|$ is the total variation of $m_n^{gx}-gm_n^x$. Using again the Fubini theorem, we obtain the majoration
$$
 \|\inv{g}f_n(gx, \cdot) - f_n(x, \cdot)\|_1 \leq \int_G \int_G f(\inv{u} gt) \, d\lambda(t) \, d|m_n^{gx}-gm_n^x|(u) \leq \|m_n^{gx}-gm_n^x\|_1.
$$
This tends to zero uniformly on compact subsets of $X \times G$.

(b) $ \Rightarrow$ (c). An easy approximation argument allows us to replace the net $\{f_n\}_{n \in N}$ by a net in $C_c(X \times G)^+$ satisfying conditions (1) and (2) of (c).

(c) $ \Rightarrow$ (b). Let $\{f_n\}_{n \in N}$ be as in (c) and choose $f$ in $C_c(G)^+$ as above. Let us define $\{f_{n, i}\}_{(n, i) \in (N \times \N^*) }$ by
$$
f_{n, i}(x, g)=\frac{f_n(x, g) + \frac{1}{i} f(g)}{\int_G f_n(x, t) \, d\lambda(t) + \frac{1}{i}}.
$$
Then $\{f_{n, i}\}_{(n, i) \in (N \times \N^*)}$ satisfies conditions (1) and (2) of (b).

(b) $ \Rightarrow$ (a) is obvious : given $\{f_n\}_{n \in N}$ as in (b), we define $m_n^x$ to be the probability measure with density $f_n(x, \cdot)$ with respect to the left Haar measure. Then $\{m_n\}_{n \in N}$ satisfies the conditions in the definition of amenable transformation group.
\end{proof}

\section{Auxiliary results}
In this section, some results that will be useful in the subsequent proofs are given. It is to be taken as a miscellanea since the results have no particular connexion between each other.

\begin{lem}\label{auxiliaryLemma1}
 Let $G$ be a locally compact group, $X$ a compact space and $E$ a Banach $(G, X)$ module of type C. If $v$ is a map in $C(G, E)$, such that $\|v(g)\| \leq C$ for every $g$ in $G$ with $C$ in $\R_+$, then
$$
\left\| \int_G f(t) \cdot v(t) \, d\lambda(t)\right\| \leq  \left\| \int_G |f(t)| \, d\lambda(t)\right\| C
$$
for every $f$ in $C_c(G, C(X))$.
\end{lem}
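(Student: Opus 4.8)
The plan is to approximate both Bochner integrals by common Riemann-type sums over a partition of the compact support of $f$, and then to apply the type-C inequality~\eqref{typeC:eq} termwise. So let $f \in C_c(G, C(X))$ have compact support $K$. The map $t \mapsto f(t)\cdot v(t)$ lies in $C_c(G,E)$ and $t \mapsto |f(t)|$ lies in $C_c(G, C(X))$ (continuity of the latter is immediate, e.g. from $\|\,|f(t)| - |f(s)|\,\|_\infty \leq \|f(t)-f(s)\|_\infty$), so both are Bochner integrable by Remark~\ref{integrale_C_c(G):rmk}. Since $f$ and $v$ are continuous, for each $n \in \N$ I would cover $K$ by finitely many open subsets of $G$ on each of which $f$ oscillates by less than $1/n$ in $\|\cdot\|_\infty$ and $v$ oscillates by less than $1/n$ in $\|\cdot\|_E$, disjointify this cover to obtain a Borel partition $K = A^n_1 \sqcup \cdots \sqcup A^n_{m_n}$ (discarding empty pieces), and pick $g^n_i \in A^n_i$. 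Note $\lambda(A^n_i) < \infty$ since $A^n_i \subseteq K$.

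Set $s_n := \sum_i \indicc{A^n_i}\, f(g^n_i)\cdot v(g^n_i)$ and $u_n := \sum_i \indicc{A^n_i}\, |f(g^n_i)|$, extended by $0$ off $K$; these are integrable simple maps with $\int_G s_n \, d\lambda = \sum_i \lambda(A^n_i)\, f(g^n_i)\cdot v(g^n_i)$ and $\int_G u_n \, d\lambda = \sum_i \lambda(A^n_i)\,|f(g^n_i)|$. Using $\|\phi \cdot w\| \leq \|\phi\|\,\|w\|$ in $E$, the hypothesis $\|v\| \leq C$ and the oscillation control, one checks that $\|s_n(t) - f(t)\cdot v(t)\| \leq (\|f\|_\infty + C)/n$ and $\|u_n(t) - |f(t)|\|_\infty \leq 1/n$ for $t \in K$ (and both differences vanish off $K$), while $\|s_n\| \leq \|f\|_\infty C\,\indicc{K}$ and $\|u_n\| \leq \|f\|_\infty\,\indicc{K}$. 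The dominated convergence Theorem~\ref{convergence_dominee} then gives
$$\sum_i \lambda(A^n_i)\, f(g^n_i)\cdot v(g^n_i) \longrightarrow \int_G f(t)\cdot v(t)\, d\lambda(t) \text{ in } E \quad\text{and}\quad \sum_i \lambda(A^n_i)\,|f(g^n_i)| \longrightarrow \int_G |f(t)|\, d\lambda(t) \text{ in } C(X)$$
as $n \to \infty$.

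For fixed $n$, I would then apply~\eqref{typeC:eq} with $\phi_i := \lambda(A^n_i)\, f(g^n_i) \in C(X)$ and the vectors $v(g^n_i) \in E$; since $|\phi_i| = \lambda(A^n_i)\,|f(g^n_i)|$ and $\max_i \|v(g^n_i)\| \leq C$, this yields
$$\left\| \sum_i \lambda(A^n_i)\, f(g^n_i)\cdot v(g^n_i) \right\| \leq \left\| \sum_i \lambda(A^n_i)\,|f(g^n_i)| \right\| C .$$
Letting $n \to \infty$ and using continuity of the norms on $E$ and on $C(X)$ gives the asserted inequality.

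The two points requiring a little care are the construction of a single partition that simultaneously controls the oscillation of $f$ and of $v$ (so that the same sums approximate both integrals), and the observation that the relevant tool is~\eqref{typeC:eq} rather than the defining inequality of type C, since the coefficients $\phi_i = \lambda(A^n_i) f(g^n_i)$ need not be nonnegative; both become routine once noticed.
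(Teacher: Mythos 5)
Your proposal is correct and follows essentially the same route as the paper: approximate both Bochner integrals by Riemann-type sums over a common Borel partition of the support (obtained by simultaneously controlling the oscillation of $f$ and $v$), apply the type-C inequality in the form~\eqref{typeC:eq} to each sum, and pass to the limit. Your explicit remark that the coefficients $\lambda(A^n_i)f(g^n_i)$ need not be nonnegative, so that~\eqref{typeC:eq} rather than the defining inequality is the tool needed, is a point the paper leaves implicit.
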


Notice that this lemma is nothing but the integral version of the type C condition for Banach $(G, X)$-modules.

\begin{proof}
Let $v$ and $C$ be as in the statement of the lemma and take $f$ in $C_c(G, C(X))$. First of all, notice that both integrals above exist by Remark~\ref{integrale_C_c(G):rmk}. Define a map $\Gamma : G \rightarrow E$ by declaring that $\Gamma(g) = f(g) \cdot v(g)$ for every $g$ in $G$. Since $f$ has compact support, so has $\Gamma$ and let $K$ be its compact support. Let $M$ be defined by
$$
M:= \sup_{g \in G}\|f(g)\|_{C(X)} + C.
$$
The continuity of $f$ and $v$ implies that for every integer $n$ and every $g$ in $G$ some open subset $O_g^n$ of $G$ such that
$$
\|f(s) - f(g)\|_{C(X)} \leq \frac{1}{2 M n} \quad \text{and} \quad \|v(s)-v(g)\|_E \leq \frac{1}{2 M n} \quad (s \in O_g^n)
$$
can be found. Notice that this implies that
$$
\| \Gamma(s) - \Gamma (g)\| \leq \|f(s)\| \|v(s) - v(g)\| + \|f(s) - f(g)\| \|v(g)\| \leq \frac{1}{n} \quad (s \in O^n_g).
$$
Since $K$ is compact, for every integer $n$ we can find $g^n_1, \dots, g_{m_n}^n$ in $G$ such that $K \subseteq O^n_{g^n_1} \cup \cdots \cup O^n_{g^n_{m_n}}$. For every integer $n$ define Borel sets by $A^n_1 := O^n_{g^n_1}$,
$$
\quad A_i^n : = O^n_{g^n_i} \backslash \bigcup_{1 \leq j < i} A^n_j \quad \text{ and simple maps by } \quad s_n := \sum_{i=1}^{m_n} \indicc{A^n_i} f(g^n_i) v(g_i^n),
$$
where the index $i$ ranges in $\{2, \dots, m_n\}$. An argument similar to the one used in Remark~\ref{integrale_C_c(G):rmk} shows that
\begin{equation}\label{eq1:14.12}
\int_G f(t) \cdot v(t) \, d\lambda(t) = \lim_{n \to \infty}\sum_{i=1}^{m_n} \lambda(A^n_i) f(g^n_i) \cdot v(g_i^n)
\end{equation}
and
$$
\int_G |f(t)| \, d\lambda(t) = \lim_{n \to \infty}\sum_{i=1}^{m_n} \lambda(A^n_i) |f(g^n_i)|.
$$
Thus, taking the norm on both sides of equation~\eqref{eq1:14.12} and using the fact that $E$ is of type C, the proof is complete.
\end{proof}

%
%
%
%

Before stating our next result, let us give a brief description of the context in which it will be used. This may render this section somewhat more digest and throw light on the later use of the result in question. Assume we have a short exact sequence of $G$-morphisms of Banach $G$-modules
$$
0 \longrightarrow A \overset{\alpha}{\longrightarrow} B \overset{\beta}{\longrightarrow} C \longrightarrow 0
$$
and that we are looking for a $G$-invariant element in $B$. Assume furthermore that we know that there are $G$-invariant elements in $C$. We wish we could write the short exact sequence
$$
0 \longrightarrow A^G \overset{\alpha}{\longrightarrow} B^G \overset{\beta}{\longrightarrow} C^G \longrightarrow 0,
$$
where the superscript $^G$ denotes the $G$-invariant elements. Sadly, this not true in general and the sequence stops at $B^G$. However, there is a simple condition for this to hold, namely the existence of a $G$-morphism $\sigma : C \rightarrow B$ such that $\beta \circ \sigma = Id_C$. The next results give conditions equivalent to the existence of such a $G$-morphism. They are both almost exact copies of Proposition 4.2.1 and Corollary 4.2.6 of~\cite{These_Monod}.

\begin{prop}\label{prepa:prop}
 Let $\eta : A \rightarrow B$ be a $G$-morphism of Banach $G$-modules. If both subspaces $\mathrm{Ker}(\eta)$ and $\mathrm{Im}(\eta)$ admit $G$-invariant complements, then there exists a $G$-morphism $\sigma : B \rightarrow A$ such that $\eta \sigma \eta = \eta$.
\end{prop}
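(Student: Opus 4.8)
The plan is to convert the two complementation hypotheses into internal direct sum decompositions equipped with $G$-equivariant projections, to invert $\eta$ on a complement of its kernel via the open mapping theorem, and then to patch these maps together. First I would fix a closed $G$-invariant complement $K$ of $\mathrm{Ker}(\eta)$ in $A$ and a closed $G$-invariant complement $I$ of $\mathrm{Im}(\eta)$ in $B$. Since a complemented subspace of a Banach space is automatically closed, $\mathrm{Im}(\eta)$ is itself a Banach space; the projection $q\colon A\to K$ along $\mathrm{Ker}(\eta)$ and the projection $p\colon B\to\mathrm{Im}(\eta)$ along $I$ are continuous, and they are $G$-equivariant because their kernels and images are $G$-invariant subspaces.

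Next I would observe that the restriction $\eta|_K\colon K\to\mathrm{Im}(\eta)$ is a continuous linear bijection: it is injective because $K\cap\mathrm{Ker}(\eta)=\{0\}$, and surjective because $A=\mathrm{Ker}(\eta)+K$ forces $\eta(A)=\eta(K)$. As $K$ and $\mathrm{Im}(\eta)$ are both Banach spaces, the open mapping theorem furnishes a continuous inverse $\theta\colon\mathrm{Im}(\eta)\to K$, and since $\eta$ is $G$-equivariant so is $\theta$; hence $\theta$ is a $G$-morphism. I would then define $\sigma\colon B\to A$ as the composition $B\xrightarrow{p}\mathrm{Im}(\eta)\xrightarrow{\theta}K\hookrightarrow A$, which is a $G$-morphism, being a composition of $G$-morphisms.

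To verify $\eta\sigma\eta=\eta$, take $a\in A$ and write $a=a_0+a_1$ with $a_0\in\mathrm{Ker}(\eta)$ and $a_1\in K$. Then $\eta(a)=\eta(a_1)\in\mathrm{Im}(\eta)$, so $p(\eta(a))=\eta(a_1)$, then $\theta(\eta(a_1))=a_1$ because $\theta$ inverts $\eta|_K$, and therefore $\eta(\sigma(\eta(a)))=\eta(a_1)=\eta(a)$, as required.

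I do not anticipate a serious obstacle here. The only points that require a little care are ensuring that $\mathrm{Im}(\eta)$ is closed, so that the open mapping theorem applies and $\theta$ is well defined and continuous, which is exactly what the existence of a (topological) complement guarantees, and checking that the relevant projections are genuinely $G$-equivariant, which follows automatically from the $G$-invariance of the summands.
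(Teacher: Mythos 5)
Your proof is correct and takes essentially the same route as the paper's: the paper inverts the induced isomorphism $\bar\eta\colon A/\mathrm{Ker}(\eta)\to\mathrm{Im}(\eta)$ (using that $\mathrm{Im}(\eta)$ is closed because it is complemented) and composes with the equivariant projection of $B$ onto $\mathrm{Im}(\eta)$, which coincides with your construction once $A/\mathrm{Ker}(\eta)$ is identified with your complement $K$. All the points you flag (closedness of $\mathrm{Im}(\eta)$, continuity and equivariance of the projections) are handled the same way in the paper, so there is nothing to add.
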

\begin{proof}
 Assume that both subspaces $\mathrm{Ker}(\eta)$ and $\mathrm{Im}(\eta)$ admit invariant complements. The fact that these subspaces are complemented implies that there exists surjective idempotent morphisms $p : A \rightarrow \mathrm{Ker}(\eta)$ and $q :B \rightarrow \mathrm{Im}(\eta)$ ; and the fact that these complements are $G$-invariant implies that both $p$ and $q$ are $G$-morphisms.

Let $\pi : A \rightarrow A / \mathrm{Ker}(\eta)$ be the canonical projection and $\bar{\eta} : A/\mathrm{Ker}(\eta) \rightarrow \mathrm{Im}(\eta)$ the induced morphism. Since $\mathrm{Ker}(\eta)$ is $G$-invariant, the action of $G$ on $A$ induces a well defined action on $A / \mathrm{Ker}(\eta)$ with respect to which $\bar{\eta}$ is equivariant. Since $\mathrm{Im}(\eta)$ is closed (it is complemented), $\bar{\eta}$ is an isomorphism of topological vector spaces and hence admits an inverse morphism $\inv{\bar{\eta}}$. The map $Id - p$ vanishes on $\mathrm{Ker}(\eta)$ and therefore induces a $G$-morphism $\overline{Id - p} : A/\mathrm{Ker}(\eta) \rightarrow A$. The map $\sigma : B \rightarrow A$ is defined by
$$
\sigma := (\overline{Id - p})\inv{\bar{\eta}}q.
$$
At this step, the relations $q \eta = \eta$, $\inv{\bar{\eta}} \eta = \pi$ and $(\overline{Id - p})\pi = Id - p $ can be used in order to verify that $\eta \sigma \eta = \eta$. Since all the maps through which $\sigma$ is defined are equivariant, so is $\sigma$.
\end{proof}

\begin{cor}\label{G_split}
Let
$$
0 \longrightarrow A \overset{\alpha}{\longrightarrow} B \overset{\beta}{\longrightarrow} C \longrightarrow 0
$$
be a short exact sequence of $G$-morphisms of Banach $G$-modules. The following assertions are equivalent.
\begin{itemize}
 \item[(a)]There exists a left inverse $G$-morphism for $\alpha$.
  \item[(b)]There exists a right inverse $G$-morphism for $\beta$.
  \item[(c)]$\mathrm{Im}(\alpha) = \mathrm{Ker}(\beta)$ admits a $G$-invariant complement in $B$.
\end{itemize}
\end{cor}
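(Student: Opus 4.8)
The plan is to establish the four implications (a) $\Rightarrow$ (c), (b) $\Rightarrow$ (c), (c) $\Rightarrow$ (a) and (c) $\Rightarrow$ (b); together they show that each of (a), (b), (c) is equivalent to (c), hence to one another. The two implications landing on (c) are proved by an elementary idempotent argument, and the two implications starting from (c) are immediate corollaries of Proposition~\ref{prepa:prop}.

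First I would treat (a) $\Rightarrow$ (c). Let $p : B \rightarrow A$ be a $G$-morphism with $p\alpha = \mathrm{Id}_A$ and set $e := \alpha p : B \rightarrow B$. Then $e$ is a $G$-morphism and $e^2 = \alpha(p\alpha)p = \alpha p = e$, so $e$ is an idempotent morphism; consequently $B = \mathrm{Im}(e) \oplus \mathrm{Ker}(e)$, both summands being closed and (since $e$ is $G$-equivariant) $G$-invariant. Moreover $\mathrm{Im}(e) = \mathrm{Im}(\alpha)$: the inclusion $\mathrm{Im}(e) \subseteq \mathrm{Im}(\alpha)$ is clear, and conversely $\alpha(a) = \alpha(p\alpha(a)) = e(\alpha(a))$ for every $a$ in $A$. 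Hence $\mathrm{Ker}(e)$ is a $G$-invariant complement of $\mathrm{Im}(\alpha) = \mathrm{Ker}(\beta)$, which is (c). The implication (b) $\Rightarrow$ (c) is symmetric: if $s : C \rightarrow B$ is a $G$-morphism with $\beta s = \mathrm{Id}_C$, then $s$ is injective and $e := s\beta$ is again an idempotent $G$-morphism of $B$; from injectivity of $s$ one gets $\mathrm{Ker}(e) = \mathrm{Ker}(\beta)$, so that $\mathrm{Im}(e)$ is a closed $G$-invariant complement of $\mathrm{Ker}(\beta)$ in $B$.

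For (c) $\Rightarrow$ (a) I would apply Proposition~\ref{prepa:prop} to the $G$-morphism $\alpha$: its kernel is $\{0\}$, so $A$ itself serves as a $G$-invariant complement, and its image $\mathrm{Ker}(\beta)$ admits a $G$-invariant complement by hypothesis. The proposition then yields a $G$-morphism $\sigma : B \rightarrow A$ with $\alpha\sigma\alpha = \alpha$, and injectivity of $\alpha$ upgrades this to $\sigma\alpha = \mathrm{Id}_A$, i.e. $\sigma$ is a left inverse $G$-morphism for $\alpha$. Symmetrically, for (c) $\Rightarrow$ (b) I would apply Proposition~\ref{prepa:prop} to $\beta$: its image is all of $C$ (complemented by $\{0\}$) and its kernel admits a $G$-invariant complement by hypothesis, so there is a $G$-morphism $\sigma : C \rightarrow B$ with $\beta\sigma\beta = \beta$, and surjectivity of $\beta$ gives $\beta\sigma = \mathrm{Id}_C$.

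No serious obstacle remains once Proposition~\ref{prepa:prop} is in hand; the only points demanding a little care are checking that $\alpha p$ and $s\beta$ are idempotent morphisms with the asserted kernels and images, and noticing that in each invocation of Proposition~\ref{prepa:prop} one of the two relevant subspaces is either $\{0\}$ or the whole space and thus trivially complemented in a $G$-invariant way.
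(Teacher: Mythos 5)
Your proof is correct and follows essentially the same route as the paper: both directions into (c) are handled by the idempotents $\alpha p$ and $s\beta$ (the paper uses the complementary idempotent $\mathrm{Id}-\gamma\beta$ for the second, which is the same decomposition), and both directions out of (c) are obtained by applying Proposition~\ref{prepa:prop} to $\alpha$ and $\beta$ and using injectivity, respectively surjectivity, to upgrade $\eta\sigma\eta=\eta$ to a genuine one-sided inverse. Your explicit remark that in each invocation of Proposition~\ref{prepa:prop} one of the two subspaces is $\{0\}$ or the whole space is a detail the paper leaves implicit, but there is no substantive difference.
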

\begin{proof}
First, we show that both (a) and (b) imply (c). If $\sigma : B \rightarrow A$ is a left inverse $G$-morphism for $\alpha$, then $\mathrm{Im}(\alpha) = \mathrm{Im}(\alpha \sigma)$. Since $\alpha \sigma$ is idempotent, we obtain that $\mathrm{Im}(\alpha)$ is complemented. Its complement being $\mathrm{Ker}(\alpha \sigma)$, it is clear that the complement in question is $G$-invariant. If $\gamma : C \rightarrow B$ is a right inverse $G$-morphism for $\beta$, a similar argument shows that $\mathrm{Ker}(\beta) = \mathrm{Im}(Id- \gamma \beta)$. Again, using the fact that $Id- \gamma \beta$ is idempotent and equivariant, it is easy to see that $\mathrm{Ker}(\beta)$ admits a $G$-invariant complement.

To prove that (c) implies (a) and (b), it suffices to apply Proposition~\ref{prepa:prop} to the cases $\eta = \alpha$ and $\eta = \beta$,  using the injectivity of $\alpha$ and the surjectivity of $\beta$.
\end{proof}
%
%
%
%
For the next result, some notations must be established. As usual, let $G$ be a locally compact group and $X$ a compact space. We will write $I$\index{I@$I$ map} (for integral) the linear map from $C(X, L^1G)$ to $C(X)$ whose images are defined on $X$ by
\begin{equation}\label{operatorI}
If(x) := \int_G f(x)(t) \, d\lambda(t).
\end{equation}
Notice that $I$ is continuous and that $\| I \|  \leq 1$. Define a subspace $H$ of $C(X, L^1G)$ by
$$
H:=\{f \in C(X, L^1G) : I f \in \R \indicc{X}\}.
$$
Since $H$ is the pre-image of the closed subset $\R \indicc{X}$ under the continuous map $I$, the subspace $H$ is closed and it is therefore a Banach space. Using dual maps, we can also define a subspace $\tilde{H}$ of the double dual of $C(X, L^1G)$ by
$$
\tilde{H}:=\{f \in C(X, L^1G)^{**} : I^{**} f \in \R j(\indicc{X})\},
$$
where $j$ denotes the canonical embedding of $C(X)$ into its double dual. Elements that are mapped to $\indicc{X}$ or $j(\indicc{X})$ by $I$ or its double dual map are said to \emph{sum up to one}\index{sum up to one}. Finally, let $H^\perp$ be the annihilator of $H$ in $C(X, L^1G)^*$.

\begin{prop}\label{auxiliaryProp1}
 With the above notations, if $\iota : H \rightarrow C(X, L^1G)$ denotes the inclusion, then $\tilde{H} = \iota^{**}(H^{**})$.
\end{prop}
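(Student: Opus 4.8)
The plan is to rewrite both $\tilde H$ and $\iota^{**}(H^{**})$ as annihilators inside $C(X, L^1G)^{**}$ and to observe that they coincide.

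\medskip

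First I would invoke the standard Banach-space fact that for a closed subspace $H$ of a Banach space $V$ the restriction map $\iota^* : V^* \to H^*$ is a quotient map with kernel the annihilator $H^\perp$ (Hahn--Banach), so that $\iota^{**} = (\iota^*)^*$ is an isometric embedding which identifies $H^{**} = (V^*/H^\perp)^*$ with the subspace $(H^\perp)^\perp := \{F \in V^{**} : F|_{H^\perp} = 0\}$. Applied to $H \subseteq C(X, L^1G)$ this gives $\iota^{**}(H^{**}) = (H^\perp)^\perp$, and it remains to check that $(H^\perp)^\perp = \tilde H$.

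\medskip

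The second step is to compute $H^\perp$ explicitly. Put $W := \mathbb{R}\indicc{X} \subseteq C(X)$ and $W^\perp := \{\psi \in C(X)^* : \psi(\indicc{X}) = 0\}$. The map $I$ is surjective: for $h$ in $C(X)$ and any fixed $\psi_0$ in $L^1G$ with $\int_G \psi_0 \, d\lambda = 1$, the function $x \mapsto h(x)\psi_0$ lies in $C(X, L^1G)$ and is sent to $h$ by $I$. Hence the induced map $\bar I : C(X, L^1G)/\mathrm{Ker}(I) \to C(X)$ is an isomorphism, and since $H = I^{-1}(W) \supseteq \mathrm{Ker}(I)$ it carries $H/\mathrm{Ker}(I)$ onto $W$. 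A functional $\phi$ annihilates $H$ precisely when it vanishes on $\mathrm{Ker}(I)$ and the functional it induces on the quotient vanishes on $W$; transporting through $\bar I$, this says exactly that $\phi = I^*\psi$ for some $\psi$ in $W^\perp$. Thus $H^\perp = I^*(W^\perp)$.

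\medskip

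Combining the two steps, $F$ belongs to $(H^\perp)^\perp$ if and only if $F(I^*\psi) = (I^{**}F)(\psi) = 0$ for every $\psi$ in $W^\perp$, i.e. if and only if $I^{**}F$ annihilates $W^\perp$ in $C(X)^{**}$. But $W^\perp$ is the kernel of the single functional $j(\indicc{X})$ on $C(X)^*$, so its annihilator in $C(X)^{**}$ is $\mathbb{R}\,j(\indicc{X})$; hence $F \in (H^\perp)^\perp$ if and only if $I^{**}F \in \mathbb{R}\,j(\indicc{X})$, which is precisely the defining condition of $\tilde H$. Therefore $\iota^{**}(H^{**}) = (H^\perp)^\perp = \tilde H$.

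\medskip

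I do not expect a genuine obstacle: the argument is essentially formal manipulation of iterated annihilators. The one input peculiar to this situation is the surjectivity of $I$, used to identify $H^\perp$ with $I^*(W^\perp)$; everything else --- that $\iota^{**}$ has range $(H^\perp)^\perp$, the adjoint identity $(I^{**}F)(\psi) = F(I^*\psi)$, and that the one-dimensionality of $W$ forces $(W^\perp)^\perp = \mathbb{R}\,j(\indicc{X})$ --- is standard, so the main care needed is bookkeeping the three nested annihilators correctly.
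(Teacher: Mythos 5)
Your proof is correct, and it takes a cleaner, more functorial route than the paper's. You reduce everything to the bipolar identity $\iota^{**}(H^{**}) = (H^\perp)^\perp$ and to an exact computation $H^\perp = I^*(W^\perp)$ with $W = \R\,\indicc{X}$, for which you need the surjectivity of $I$ (via the section $h \mapsto h\psi_0$) and the open mapping theorem; the identity $(W^\perp)^\perp = \R\,j(\indicc{X})$ then finishes the argument by pure linear algebra. The paper instead proves the two inclusions by hand: for $\iota^{**}(H^{**}) \subseteq \tilde H$ it uses weak-* density of $H$ in $H^{**}$ and passes to limits of multiples of $j(\indicc{X})$, and for the reverse inclusion it builds a preimage in $H^{**}$ by Hahn--Banach extension of functionals on $H$, with well-definedness guaranteed by a lemma asserting that elements of $\tilde H$ kill $H^\perp$ --- which is exactly your containment $\tilde H \subseteq (H^\perp)^\perp$ in disguise. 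The paper's preparatory lemma only establishes $H^\perp \subseteq I^*(C(X)^*)$, whereas you pin down $H^\perp$ precisely; this extra precision is what lets you avoid the net argument entirely and makes the equality drop out of annihilator bookkeeping. Both proofs share the same essential input (functionals vanishing on $H$ factor through $I$), so the gain of your version is economy and transparency rather than new content.
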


Proving this proposition becomes really easy with the help of the two following lemmas.

\begin{lem}\label{lem1:14.12}
 The orthogonal complement of $H$ is contained in $I^* (C(X)^*)$.
\end{lem}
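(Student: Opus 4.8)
The key point is that $\mathrm{Ker}(I)$ is contained in $H$, since $0$ lies in $\R \indicc{X}$. Taking annihilators reverses inclusions, so $H^\perp \subseteq \mathrm{Ker}(I)^\perp$, and it will therefore be enough to prove that $\mathrm{Ker}(I)^\perp \subseteq I^*(C(X)^*)$ (in fact equality holds, but only this inclusion is needed here). So the statement of the lemma is really a statement about the map $I$ alone, the subspace $H$ playing no role beyond the trivial inclusion above.

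The plan is to exhibit a bounded linear section of $I$. Fix a function $\psi$ in $L^1G$ with $\int_G \psi \, d\lambda = 1$ (for instance the normalised indicator of a compact set of positive measure, as used in the proof of Theorem~\ref{anker}). Define $s : C(X) \rightarrow C(X, L^1G)$ by $s(h)(x) = h(x)\psi$; this is the map $h \mapsto h \otimes \psi$ already mentioned in the introduction, and it takes values in $C(X, L^1G)$ because $\|s(h)(x) - s(h)(x')\|_1 = |h(x)-h(x')|\,\|\psi\|_1$, which also shows it is continuous and $\|s\|\leq 1$. One computes $I(s(h))(x) = \int_G h(x)\psi(t)\,d\lambda(t) = h(x)$, so that $I \circ s = \mathrm{Id}_{C(X)}$.

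Now let $\phi$ belong to $\mathrm{Ker}(I)^\perp$ and set $\mu := \phi \circ s \in C(X)^*$. For any $f$ in $C(X, L^1G)$, the element $f - s(If)$ lies in $\mathrm{Ker}(I)$, because $I(f - s(If)) = If - I(s(If)) = 0$; hence $\phi(f) = \phi(s(If)) = \mu(If) = (I^*\mu)(f)$. Therefore $\phi = I^*\mu$ belongs to $I^*(C(X)^*)$, which is the desired conclusion. The argument is entirely formal once the section $s$ is in place, so there is no real obstacle; the only thing to verify with a little care is that $s$ genuinely maps into $C(X, L^1G)$ and is bounded, and that is immediate from the displayed estimate.
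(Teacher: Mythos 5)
Your proof is correct and follows essentially the same route as the paper: your section $s(h)(x)=h(x)\psi$ is exactly the map $T$ used there, and the identity $\phi(f)=\phi(s(If))=(I^*\mu)(f)$ is the paper's computation, with your preliminary reduction to $\mathrm{Ker}(I)^\perp$ being only a cosmetic repackaging of the observation that $h^\perp(f)$ depends only on $If$. The argument is complete as written.
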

\begin{proof}
Choose $\psi$ in $L^1G$ with $\int_G \psi \, d\lambda = 1$ and define a continuous linear map $T : C(X) \rightarrow C(X, L^1G)$ by declaring that $Tk(x) = k(x) \psi$ for every $x$ in $X$. Now, observe that if $h^\perp$ belongs to $H^\perp$, then for every $f$ in $C(X, L^1G)$ the value of $h^\perp(f)$ only depends on $If$. In fact, if $f$ and $f'$ are maps in $C(X, L^1G)$ such that $If = If'$, then $I(f-f')$ belongs to $\R \indicc{X}$ and therefore $h^\perp(f-f')=0$.

Let $h^\perp$ belong to $H^\perp$ and let us show that $h^\perp = I^*k^*$ for some $k^*$ in $C(X)^*$. Define $k^*$ in $C(X)^*$ by
$$
k^*(k) := h^\perp(Tk) \quad (k \in C(X)).
$$
Since $I(T(If)) = If$ for every $f$ in $C(X, L^1G)$, we have $h^\perp(T(If)) = h^\perp(f)$. It follows that $h^\perp = I^*k^*$.
\end{proof}

\begin{lem}\label{lem2:14.12}
 If $\mu$ belongs to $\tilde{H}$, then $\mu(h^\perp) = 0$ for every $h^\perp$ in $H^\perp$.
\end{lem}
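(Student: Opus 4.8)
The plan is to pull everything back to the base space $C(X)$ via the adjoint of $I$, where the condition defining $\tilde H$ becomes completely transparent, and the key input is Lemma~\ref{lem1:14.12}.

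First I would fix $\mu$ in $\tilde H$ and $h^\perp$ in $H^\perp$. By Lemma~\ref{lem1:14.12} there is some $k^*$ in $C(X)^*$ with $h^\perp = I^* k^*$. The only property of $k^*$ I will actually need is that $k^*(\indicc{X}) = 0$, and I would extract this from the fact that $h^\perp$ annihilates $H$ together with the observation that $\indicc{X}$ is attained by $I$ on $H$: if $\psi$ in $L^1G$ has integral $1$ (as in the proof of Lemma~\ref{lem1:14.12}), the constant map $f_0 : x \mapsto \psi$ lies in $C(X,L^1G)$, satisfies $If_0 = \indicc{X}$, and hence belongs to $H$. Therefore $0 = h^\perp(f_0) = (I^*k^*)(f_0) = k^*(If_0) = k^*(\indicc{X})$.

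Then I would simply unwind the bidual pairing. By definition of the double adjoint, $\mu(h^\perp) = \langle \mu, I^* k^* \rangle = \langle I^{**}\mu, k^* \rangle$. Since $\mu$ belongs to $\tilde H$, we have $I^{**}\mu = c\, j(\indicc{X})$ for some real number $c$, where $j$ is the canonical embedding of $C(X)$ into its double dual. Hence $\mu(h^\perp) = c\, \langle j(\indicc{X}), k^* \rangle = c\, k^*(\indicc{X}) = 0$, which is the claim.

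I do not expect a genuine obstacle here: the statement is essentially a bookkeeping exercise with adjoints once Lemma~\ref{lem1:14.12} is available. The only point requiring a little care is the definition-chasing with $j$ and the identity $\langle I^{**}\mu, k^*\rangle = \langle \mu, I^*k^*\rangle$, and confirming that $\indicc{X}$ genuinely lies in $I(H)$ so that the annihilation of $H$ by $h^\perp$ forces $k^*(\indicc{X}) = 0$ (rather than $k^*$ merely vanishing on a proper subspace).
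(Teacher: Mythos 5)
Your argument is correct and is essentially the paper's own proof: both factor $h^\perp = I^*k^*$ via Lemma~\ref{lem1:14.12}, use the constant map $x \mapsto \psi$ (which lies in $H$ and is sent to $\indicc{X}$ by $I$) to conclude $k^*(\indicc{X}) = 0$, and then chase adjoints using $I^{**}\mu \in \R\, j(\indicc{X})$. The only difference is cosmetic: you isolate $k^*(\indicc{X})=0$ as a separate step, whereas the paper runs the whole computation as one chain of equalities.
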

\begin{proof}
 Let $h^\perp$ belong to $H^\perp$. By Lemma~\ref{lem1:14.12}, we know that $h^\perp = I^* k^*$ for some $k^*$ in $C(X)^*$. Let $\psi$ be an element of $L^1G$ such that $\int_G \psi \, d\lambda = 1$ and see it as a constant map in $C(X, L^1G)$. If $\mu$ belongs to $\tilde{H}$, then for some $r$ in $\R$, we have
\begin{equation*}
\mu(h^\perp) = I^{**}\mu (k^*) = r \, k^* ( \indicc{X})  = r \, k^* (I\psi) = r \, I^*k^* (\psi) = r \, h^\perp (\psi) = 0. \qedhere
\end{equation*}
\end{proof}

\begin{proof}[Proof of Proposition~\ref{auxiliaryProp1}]
First, we show that $\iota^{**}(H^{**})$ is contained in $\tilde{H}$. Let $\mu$ belong to $H^{**}$. It must be shown that $I^{**}(\iota^{**}\mu)$ belongs to $\R j(\indicc{X})$, where $j$ is the canonical embedding of $C(X)$ into its double dual. Since there exists a net in $H$ that weak-* converges to $\mu$, it follows that $I^{**}(\iota^{**}\mu)$ is the weak-* limit of a net whose elements are of the form $r_n j(\indicc{X})$, where the $r_n$'s belong to $\R$.

We can suppose that $I^{**}(\iota^{**} \mu) \neq 0$. Otherwise there is nothing to show. Let $k^*$ in $C(X)^*$ be such that $I^{**}(\iota^{**} \mu)(k^*)= r \neq 0$. We have
$$
0 \neq r = I^{**}(\iota^{**} \mu)(k^*) = \lim_{n} r_n k^*(\indicc{X}).
$$
This implies that $I^{**}(\iota^{**} \mu) = [r/k^*(\indicc{X})] j(\indicc{X})$.

To show that $\tilde{H}$ is contained in $\iota^{**}(H^{**})$, let $\xi$ belong to $\tilde{H}$. We have to find a $\mu$ in $H^{**}$ such that $\xi = \iota^{**} \mu$. For every $h^*$ in $H^*$, define
$
\mu (h^*) = \xi (\overline{h^*}),
$
where $\overline{h^*}$ is any extension of $h^*$ to $C(X, L^1G)$ such that $\|\overline{h^*}\|=\|h^*\|$. By Lemma~\ref{lem2:14.12}, this gives rise to a well defined bounded linear map. In other words, $\mu$ belongs to $H^{**}$. Moreover, for every $f^*$ in $C(X, L^1G)^*$, we have
$$
\iota^{**}( \mu) (f^*) = \mu (\iota^* f^* )= \xi (\overline{\iota^* f^*}) = \xi (f^*).
$$
This shows that $\xi$ belongs to $\iota^{**}(H^{**})$.
\end{proof}

%
%
%
%

The following is a generalization of a well known result about weak convergence.

\begin{prop}\label{auxiliaryProp2}
 Let $E$ be a Banach space, $G$ be any set and for every $g$ in $G$, let $F_g$ be a continuous linear map from $E$ to $E$. If $\{v_n\}_{n \in N}$ is a net in $E$ such that $\{F_g(v_n)\}_{n \in N}$ converges weakly to zero for every $g$, then there exists a net of convex combinations of the $v_n$'s, whose elements are still denoted by $v_n$, such that $\{F_g(v_n)\}_{n \in N}$ converges strongly to zero for every $g$.
\end{prop}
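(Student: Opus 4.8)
The plan is to reduce the statement to the classical situation (one bounded operator, one tolerance $\epsilon$, finitely many constraints) by a standard bundling trick, and then to invoke Mazur's theorem that the weak closure and the norm closure of a convex subset of a normed space coincide.

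First I would fix a finite subset $\Phi = \{g_1, \dots, g_k\}$ of $G$ and work in the product space $E^k$ equipped with the norm $\|(w_1,\dots,w_k)\| := \max_i \|w_i\|$, which makes it a Banach space. Define $T_\Phi : E \to E^k$ by $T_\Phi(v) := (F_{g_1}(v), \dots, F_{g_k}(v))$; this is a bounded linear map. The dual of $E^k$ with this norm is $(E^*)^k$ acting by $(\ell_1,\dots,\ell_k)\cdot(w_1,\dots,w_k) = \sum_i \ell_i(w_i)$, so, since each net $\{F_{g_i}(v_n)\}_n$ converges weakly to $0$ by hypothesis, the net $\{T_\Phi(v_n)\}_n$ converges weakly to $0$ in $E^k$. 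Hence $0$ lies in the weak closure of $\{T_\Phi(v_n) : n \in N\}$, a fortiori in the weak closure of its convex hull $C_\Phi := \mathrm{conv}\{T_\Phi(v_n) : n \in N\}$. By Mazur's theorem, the weak closure of the convex set $C_\Phi$ equals its norm closure, so for every $\epsilon > 0$ there is an element of $C_\Phi$ of norm less than $\epsilon$. Being linear, $T_\Phi$ carries a finite convex combination of the $v_n$'s to the corresponding convex combination of the $T_\Phi(v_n)$'s, so such an element has the form $T_\Phi\big(\sum_j \lambda_j v_{n_j}\big)$; call $w_{\Phi,\epsilon} := \sum_j \lambda_j v_{n_j}$, so that $\max_{1\le i\le k}\|F_{g_i}(w_{\Phi,\epsilon})\| < \epsilon$.

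Then I would organize these into a net. Let $D$ be the set of pairs $(\Phi,\epsilon)$ with $\Phi \subseteq G$ finite and $\epsilon > 0$, directed by $(\Phi,\epsilon) \preceq (\Phi',\epsilon')$ whenever $\Phi \subseteq \Phi'$ and $\epsilon' \le \epsilon$; this is genuinely directed. Setting $v_{(\Phi,\epsilon)} := w_{\Phi,\epsilon}$ gives a net of convex combinations of the original $v_n$'s, indexed by $D$. For a fixed $g \in G$ and $\delta > 0$, every $(\Phi,\epsilon) \succeq (\{g\},\delta)$ satisfies $g \in \Phi$ and $\epsilon \le \delta$, so $\|F_g(v_{(\Phi,\epsilon)})\| < \epsilon \le \delta$; thus $\{F_g(v_{(\Phi,\epsilon)})\}_{(\Phi,\epsilon)\in D}$ converges strongly to $0$ for every $g$, which is the assertion.

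The only genuinely delicate point is the coordinatewise description of weak convergence in the finite product $E^k$ (equivalently, the identification of its dual), which is precisely what lets all the finitely many constraints coming from $\Phi$ be met simultaneously by a single convex combination; everything else is bookkeeping with directed sets plus the application of Mazur's theorem. Note also that the fact that $G$ is merely a set (with no topology or algebraic structure) causes no trouble, since only finite subsets of $G$ enter the construction.
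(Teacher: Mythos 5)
Your proof is correct. It rests on the same key fact as the paper's argument --- Mazur's theorem that the weak closure and the norm closure of a convex set coincide --- but packages it differently. The paper forms the single infinite product $\Pi=\prod_{g\in G}E$ with the product topology, identifies its continuous dual as the finitely supported sums of functionals on the factors, observes that the tuple net $\{(F_g(v_n))_{g}\}_n$ converges weakly to zero in $\Pi$, and applies Mazur once in that locally convex space; convergence in the product topology then yields the coordinatewise norm convergence all at once, with a single net of convex combinations. You instead work only with finite products $E^k$, which are Banach spaces, extract for each finite $\Phi\subseteq G$ and each $\epsilon>0$ one convex combination handling those finitely many constraints up to $\epsilon$, and then reassemble these into a net indexed by the directed set of pairs $(\Phi,\epsilon)$. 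What your route buys is elementarity: you only need Mazur for Banach spaces and the (easy) identification of the dual of a finite product, avoiding any discussion of duals of infinite products or of locally convex topologies; the price is the extra bookkeeping with the directed set $D$ and the change of index set, which is harmless since the statement only asks for \emph{some} net of convex combinations. Both arguments are complete; yours is a legitimate and slightly more self-contained variant of the same idea.
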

\begin{proof}
 Let $\Pi$ be the product over $G$ of $E$, that is to say $\Pi = \Pi_{g \in G} E$, and endow $\Pi$ with the product topology. By considering pre-images of a neighborhood of $0$ in $\R$, notice that the continuous linear functionals on $\Pi$ are zero on all but a finite number of components. Based on this observation, one can see that if $f$ belongs to $\Pi^*$, then it is of the form
$
f_{g_1} + \cdots + f_{g_m},
$
where the $f_{g_i}$'s belong to $E^*$. In other words,
$$
f(\{v_g\}_{g \in G}) = f_{g_1}(v_{g_1}) + \cdots + f_{g_m}(v_{g_m}) \quad (\{v_g\}_{g \in G} \in \Pi).
$$
It follows that if $\{v_n\}_{n \in N}$ is a net as in the statement, then the net in $\Pi$ constituted by the $F_g(v_n)$'s converges weakly to zero in $\Pi$.

This implies that there is a net of convex combinations of the $F_g(v_n)$'s that converges strongly to zero in $\Pi$ (this is a consequence of the Hahn-Banach theorem, which implies that the weak-closure and the norm-closure of a convex set coincide). Using the linearity of the $F_g$'s to see a convex combination of $F_g(v_n)$'s as merely $F_g$ applied to a convex combinations of the $v_n$'s, that is to say
$$
\lambda_1 F_g(v_1) + \cdots + \lambda_n F_g (v_n) = F_g (\lambda_1 v_1 + \cdots + \lambda_n v_n),
$$
we are done.
\end{proof}

\section{A theorem about amenable transformation groups}\label{result:Sect}
It is finally the time to state and proof the result mentioned in the opening of this chapter. This whole section is devoted to its proof. We recall that all the following topological spaces have the Hausdorff property.

\begin{thm}\label{main_result}
 Let $(G, X)$ be a transformation group, where $X$ is compact and $G$ locally compact. The following are equivalent.
\begin{itemize}
 \item[(a)]$(G, X)$ is an amenable transformation group.
  \item[(b)]Every dual $(G, X)$-module of type C is a relatively injective Banach $G$-module.
  \item[(c)]There is $G$-invariant element in $C(X, L^1G)^{**}$ summing to $\indicc{X}$ in $C(X)^{**}$.
  \item[(d)]There is a norm one positive $G$-invariant element in $C(X, L^1G)^{**}$ summing to $\indicc{X}$ in $C(X)^{**}$.
\end{itemize}
\end{thm}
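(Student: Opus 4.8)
The plan is to prove the cycle (a) $\Rightarrow$ (b) $\Rightarrow$ (c) $\Rightarrow$ (a) together with (a) $\Rightarrow$ (d); since a norm one positive $G$-invariant element summing to $\indicc{X}$ is \emph{a fortiori} a $G$-invariant element summing to $\indicc{X}$, the implication (d) $\Rightarrow$ (c) costs nothing and the four statements are then equivalent. For (a) $\Rightarrow$ (d) I would take an amenability net in the form of Proposition~\ref{equiv_def_amenability}(c), that is maps $f_\nu$ in $C_c(X\times G)^+$, and turn it into a net $\tilde f_\nu$ in $C(X,L^1G)$ ranging in $S(G)$ with $I\tilde f_\nu=\indicc{X}$ and $\|g\cdot\tilde f_\nu-\tilde f_\nu\|\to 0$ for every $g$. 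This uses only the left-invariance of $\lambda$ (to rewrite condition (2) of Proposition~\ref{equiv_def_amenability}) and the renormalisation of $\hat f_\nu$, where $\hat f_\nu(x):=f_\nu(x,\cdot)$, by the eventually strictly positive functions $I\hat f_\nu\in C(X)$, the elementary estimate $\bigl\|\,|b|/\|b\|-|a|/\|a\|\,\bigr\|\le 2\|a-b\|$ for $\|a\|,\|b\|\ge\tfrac12$ keeping the normalisation under control. Each $\tilde f_\nu$ has norm one and lies in the positive cone of the Banach lattice $C(X,L^1G)$ (Example~\ref{C(X, L1)_lattice:exple}), so by Banach--Alaoglu the net has a weak-$*$ cluster point $m$ in $C(X,L^1G)^{**}$, and this $m$ is the desired element: $I^{**}m=j(\indicc{X})$ because $I^{**}$ is weak-$*$ continuous and extends $I$; $\|m\|=1$ because $\|m\|\le 1$ by weak-$*$ lower semicontinuity of the norm while $1=\|j(\indicc{X})\|=\|I^{**}m\|\le\|m\|$; $m$ is $G$-invariant because each isometry $g\cdot$ has weak-$*$ continuous bidual and $g\cdot\tilde f_\nu-\tilde f_\nu\to 0$ in norm; and $m\ge 0$ because the canonical embedding is positive (Theorem~\ref{embedding_positive}) and the positive cone of the bidual Banach lattice (Theorem~\ref{V_sharp_V_star}) is an intersection of weak-$*$ closed half-spaces.

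For (c) $\Rightarrow$ (a): given a $G$-invariant $m\in C(X,L^1G)^{**}$ with $I^{**}m=j(\indicc{X})$, Proposition~\ref{auxiliaryProp1} provides a $G$-invariant $\mu\in H^{**}$ with $\iota^{**}\mu=m$, and Goldstine's theorem a bounded net $v_\nu\in H$ with $v_\nu\to\mu$ weak-$*$; hence $g\cdot v_\nu-v_\nu\to 0$ weakly in $C(X,L^1G)$ while $Iv_\nu\to\indicc{X}$ in norm. Proposition~\ref{auxiliaryProp2} then replaces the $v_\nu$ by convex combinations (which stay in the subspace $H$ and still satisfy $Iv_\nu\to\indicc{X}$) for which $g\cdot v_\nu-v_\nu\to 0$ in norm, and a scalar renormalisation yields $u_\nu\in C(X,L^1G)$ with $Iu_\nu=\indicc{X}$ and $\|g\cdot u_\nu-u_\nu\|\to 0$. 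Setting $\tilde w_\nu(x):=|u_\nu(x)|/\|u_\nu(x)\|_1$ produces — using continuity of the lattice operations (Proposition~\ref{properties_normes_lattices}) and $\|u_\nu(x)\|_1\ge 1$ — a net in $C(X,L^1G)$ ranging in $S(G)$ which, by the same estimate as above, still satisfies $\|g\cdot\tilde w_\nu-\tilde w_\nu\|\to 0$ for each $g$; this is precisely Property $(P_1^*(G,X))$. Anker's Theorem~\ref{anker} upgrades it to $(P_1(G,X))$, and reading the $(K,\epsilon)$-invariant maps it provides as densities of probability measures gives the net in the definition of an amenable transformation group, the uniformity on compact subsets of $X\times G$ being automatic since $X$ is compact, such sets lying inside $X\times K$ for $K$ compact in $G$.

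For (a) $\Rightarrow$ (b): let $E=F^*$ be a dual $(G,X)$-module of type C, let $\iota\colon A\to B$ be an injective $G$-morphism of continuous Banach $G$-modules admitting a left-inverse morphism $\sigma$, and let $\alpha\colon A\to E$ be a $G$-morphism. I would average the non-equivariant extension $\alpha\sigma\colon B\to E$: with amenable maps $f_\nu\in C_c(X\times G)^+$ and the $C(X)$-module structure of $E$, set $\Psi_\nu(b):=\int_G f_\nu(\,\cdot\,,t)\cdot\alpha\!\left(t\cdot\sigma(\inv{t}b)\right)d\lambda(t)\in E$, a Bochner integral whose integrand is continuous into $E$ because $t\mapsto t\cdot\sigma(\inv t b)$ is continuous into the \emph{continuous} module $B$ and $\alpha$ is a morphism, so that continuity of the $G$-action on $E$ is not needed. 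Lemma~\ref{auxiliaryLemma1}, the integral form of the type C inequality, bounds $\|\Psi_\nu\|$ by $(1+o(1))\|\sigma\|\|\alpha\|$; the substitution $s=gt$ together with the compatibility~\eqref{C(X)compatibility} gives $\|g\cdot\Psi_\nu(\inv g b)-\Psi_\nu(b)\|\to 0$ uniformly for $g$ in compacta (condition (2) of Proposition~\ref{equiv_def_amenability}); and $\Psi_\nu\iota\to\alpha$ because $\int_G f_\nu(\,\cdot\,,t)\,d\lambda(t)\to\indicc{X}$ in $C(X)$. Since $\mathcal{L}(B,E)=\mathcal{L}(B,F^*)\cong(B\,\hat{\otimes}\,F)^*$ is a dual Banach space, the bounded net $(\Psi_\nu)$ has a weak-$*$ cluster point $\beta$, which inherits $\beta\iota=\alpha$, $\|\beta\|\le\|\sigma\|\|\alpha\|$ and $G$-equivariance in the limit, so $E$ is relatively injective by Lemma~\ref{equivDefRelInj}.

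For (b) $\Rightarrow$ (c): $C(X,L^1G)$ is of type C (Example~\ref{ExC(X)ModuldeTypeC}), hence so is its $C(X)$-submodule $\ker I=C(X,L^1_0G)$ and hence, by Lemma~\ref{dualTypeC}, so is the dual $(G,X)$-module $C(X,L^1_0G)^{**}$, which by (b) is relatively injective; bidualising the split exact sequence $0\to C(X,L^1_0G)\to C(X,L^1G)\xrightarrow{I}C(X)\to 0$ identifies $C(X,L^1_0G)^{**}$ $G$-equivariantly and isometrically with the closed $G$-submodule $W:=\ker(I^{**})$ of $C(X,L^1G)^{**}$, so $W$ is relatively injective. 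Fixing $\psi\in S(G)$, the section $h\mapsto h\otimes\psi$ of $I$ exhibits a left-inverse morphism of $\iota\colon C(X,L^1_0G)\to C(X,L^1G)$, so relative injectivity of $W$ extends the corestricted canonical embedding $C(X,L^1_0G)\to W$ to a $G$-morphism $\beta\colon C(X,L^1G)\to W$; then $\beta-\kappa$ ($\kappa$ the canonical embedding into $C(X,L^1G)^{**}$) vanishes on $\ker I$, hence factors as $\Phi\circ I$ for a $G$-morphism $\Phi\colon C(X)\to C(X,L^1G)^{**}$, and since $I^{**}\beta=0$ while $I^{**}\kappa=j\circ I$ one reads off $I^{**}\Phi=-j$, so $m:=-\Phi(\indicc{X})$ is $G$-invariant and sums to $\indicc{X}$. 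The genuine difficulty is (a) $\Rightarrow$ (b): carrying out the averaging that yields $\beta$ in the non-discrete setting, where sums over $G$ become Bochner integrals, where the norm of the averaged operator is controlled \emph{only} through Lemma~\ref{auxiliaryLemma1} and the $(G,X)$-compatibility~\eqref{C(X)compatibility}, and where passing to the weak-$*$ limit rests on the identification $\mathcal{L}(B,F^*)\cong(B\,\hat{\otimes}\,F)^*$ flagged in the introduction; the remaining steps are comparatively routine, each concealing only a small point of analysis (existence and basic properties of the Bochner integrals, exactness of the bidualised sequence, and the survival of $Iv_\nu\to\indicc{X}$ under the passage to convex combinations in Proposition~\ref{auxiliaryProp2}).
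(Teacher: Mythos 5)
Your proposal is correct and follows essentially the same route as the paper: the cycle (a)$\Rightarrow$(b)$\Rightarrow$(c)$\Rightarrow$(a) plus (a)$\Rightarrow$(d), with the same averaged operator and the identification $\mathcal{L}(B,E)\cong(B\hat{\otimes}V)^*$ for (a)$\Rightarrow$(b), the same use of Propositions~\ref{auxiliaryProp1} and~\ref{auxiliaryProp2}, absolute values and Anker's Theorem~\ref{anker} for (c)$\Rightarrow$(a), and the same weak-$*$ cluster point argument for (a)$\Rightarrow$(d). The only cosmetic deviation is in (b)$\Rightarrow$(c), where you extract the invariant element by explicitly factoring $\beta-\kappa$ through $I$ rather than invoking Corollary~\ref{G_split}; the decisive step --- relative injectivity of the bidual of $\ker I$ splitting the bidualised sequence equivariantly --- is identical.
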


The plan of the proof is the following. It will be shown that (a) $\Rightarrow$ (b)$\Rightarrow $ (c) $\Rightarrow $ (a) and then that (a) $\Rightarrow $ (d). The missing logical step is (d) $\Rightarrow $ (c), but this implication calls for no proof.

\subsubsection*{(a) implies (b)}
Let $E$ be a dual $(G, X)$-module of type C and consider the extension problem of Banach $G$-modules
$$
\xymatrix{ A \ar@{^{(}->}[rr]_\iota \ar[rd]_\alpha && B \ar@{.>}[ld]^\beta \ar@/_1pc/[ll]_\sigma \\ &E}
$$
where $\sigma$ is a norm one projection. For every $f$ in $C_c(X \times G)$, we can define a map $\beta_f : B \rightarrow E$ by
$$
\beta_f(b) := \int_G f( \cdot, t) \cdot t \cdot \alpha (\sigma(\inv{t} b)) \, d\lambda(t),
$$
where $f(\cdot, t) \cdot$ denotes the action of the map $f(\cdot, t)$ in $C(X)$ on the vector $t \cdot \alpha (\sigma(\inv{t} b))$ in $E$. Using the equivariance of $\alpha$, the integrand can be written as $f( \cdot, t)   \cdot \alpha (t \sigma(\inv{t} b))$ and therefore $\beta_f(b)$ is well defined as the integral of a continuous map with compact support. Moreover $\beta_f$ is linear.

Since $E$ is a dual space, $\mathcal{L}(B, E)$ is isometrically isomorphic to $(B \hat{\otimes} V)^*$ (projective tensor product), where $V^* = E$. Under that isomorphism $\beta(b \otimes v) = \beta b (v)$ for every $b$ in $B$ and every $v$ in $V$. This implies that if a net $\beta_n$ in $\mathcal{L}(B, E)$ weak-* converges to some $\beta$, then $\beta_n b$ weak-* converges to $\beta b$ in $E$ for every $b$ in $B$.

Let $\{f_n\}_{n \in N}$ be a net in $C_c(X, G)^+$ as in part (c) of Proposition~\ref{equiv_def_amenability} and write $\beta_n$ for $\beta_{f_n}$. The properties of this net and Lemma~\ref{auxiliaryLemma1} imply that for $n$ large enough, $\beta_n$ is bounded with norm less or equal to $ 2 \|\alpha\|$. Let $\beta$ be any weak-* accumulation point of the $\beta_n$'s. We claim that $\|\beta\| \leq \|\alpha\|$, $\beta \circ \iota = \alpha$ and that $\beta$ is equivariant.

The claim about $\| \beta \|$ follows from the weak-* convergence of the $\beta_n b$'s towards $\beta b$ for every $b$, the fact that
$$
|\beta_n b (v)| \leq \left\| \int_G f_n(\cdot, t) \, d\lambda(t)\right\|_{C(X)} \|\alpha\| \|\sigma\| \|b\| \|v\|,
$$
which is true because of Lemma~\ref{auxiliaryLemma1}, and the properties of the net $\{f_n\}_{n \in N}$. That $\beta \circ \iota = \alpha$ follows from the convergence in $C(X)$ of $\int_G f_n (\cdot, t) \, d\lambda(t)$ towards $\indicc{X}$ and the fact that the Bochner integral commutes with continuous linear maps. In fact, using the equivariance of $\alpha$, we have
$$
\beta_n \iota(a) (v) = \left( \int_G f_n(\cdot, t) \alpha(a)\, d\lambda(t) \right) (v) = \left[  \left( \int_G f_n(\cdot, t)\, d\lambda(t) \right) \cdot \alpha(a) \right] (v),
$$
for every $a$ in $A$ and every $v$ in $V$. Since the left-hand side of this expression converges to $\beta \iota(a)(v)$ and the right-hand side to $\alpha(a)(v)$, we see that $\beta \circ \iota = \alpha$. To prove that $\beta$ is equivariant, use the triangle inequality to obtain that $|(g \cdot \beta b)(v) - \beta(gb)(v)| $ is less or equal to
$$
 |(g \cdot \beta b)(v) - (g \cdot \beta_n b)(v)|+|(g \cdot \beta_n b)(v) - \beta_n (gb)(v)|+|\beta_n (gb)(v) - \beta(gb)(v)|.
$$
The first and the last term converge to zero because $\beta_n b$ weak-* converges to $\beta b$ for every $b$ in $B$. The middle one is bounded from above by $\|g \cdot \beta_n b - \beta_n (gb) \| \|v\|$ and this expression is in turn bounded from above by
\begin{equation}\label{eq1:16.12}
2 \left\| \int_G |f_n(\cdot, t) - \inv{g} \cdot f_n( \cdot, gt)| \, d\lambda(t)\right\|_{C(X)} \|\alpha\|\|\sigma\| \|b\| \|v\|.
\end{equation}
To see this, use equation~\eqref{C(X)compatibility} and Lemma~\ref{auxiliaryLemma1}. The evaluation at $x$ in $X$ being a bounded linear map on $C(X)$, we have
$$
\left(\int_G |f_n(\cdot, t) - \inv{g} \cdot f_n( \cdot, gt)| \, d\lambda(t) \right) (x) = \int_G |f_n(x, t) -  f_n( gx, gt)| \, d\lambda(t)
$$
and so, thanks to the properties of the net $\{f_n\}_{n \in N}$, the expression in~\eqref{eq1:16.12} converges to zero.

\subsubsection*{(b) implies (c)}

Let $C(X, L^1_0G)$ be the kernel of the integration map $I$ defined by~\eqref{operatorI} and $\psi$ be any positive element in $L^1G$ summing to one. Define a map $\tau : C(X, L^1G) \rightarrow C(X, L^1_0G)$ by
$$
\tau (f) : = f - If \otimes \psi,
$$
where for every $\phi$ in $C(X)$ the map $\phi \otimes \psi$ in $C(X, L^1G)$ is given by  $x \mapsto \phi(x) \psi$. The map $\tau$ is a morphism of Banach spaces such that if $\iota$ denote the inclusion of $C(X, L^1_0G)$ in $C(X, L^1G)$, then $\tau \circ \iota = Id$. Double dualizing this equality (without adding superscripts to the morphisms for evident readability reasons), we are in the following situation.
$$
\xymatrix{ C(X, L^1_0G)^{**} \ar@{^{(}->}[rr]_\iota \ar[rd]_{Id} && C(X, L^1G)^{**} \ar@/_1pc/[ll]_\tau \\ & C(X, L^1_0G)^{**}}
$$
Reasoning as in Example~\ref{ExC(X)ModuldeTypeC}, we see that $C(X, L^1_0G)$ is a $(G, X)$-module of type C. Therefore, by Lemma~\ref{dualTypeC}, its double dual is a $(G, X)$-module of type C too. By hypothesis, it follows that $C(X, L^1_0G)^{**}$ is a relatively injective Banach $G$-module. According to Lemma~\ref{equivDefRelInj}, there exists a $G$-morphism $\sigma : C(X, L^1G)^{**} \rightarrow C(X, L^1_0G)^{**}$ that fits into the above diagram. This means that we have a short exact sequence of $G$-morphisms
$$
0 \longrightarrow C(X, L^1_0G)^{**} \overset{\iota}{\longrightarrow} C(X, L^1G)^{**} \overset{I^{**}}{\longrightarrow} C(X)^{**} \longrightarrow 0
$$
where the first $G$-morphism admits a left inverse $G$-morphism, namely $\sigma$. By Corollary~\ref{G_split}, $I^{**}$ admits a right $G$-morphism inverse. The existence of such a right inverse implies that the above short exact sequence remains exact when passing to the $G$-invariant elements. In particular, there is a $G$-invariant element $f$ in $C(X, L^1G)^{**}$ that is mapped to $j(\indicc{X})$ by $I^{**}$.

\subsubsection*{(c) implies (a)}

Let $S(G)$ be the subset of $L^1G$ defined by equation~\eqref{SG}. Suppose that there exists a net $\{f_n\}_{n \in N}$ in $C(X, L^1G)$ such that all the $f_n(X)$'s are contained in $S(G)$ and $\|g f_n -f_n \|$ converges to zero for every $g$ in $G$. This would imply that Property $(P_1^*(G, X))$ is verified. By Theorem~\ref{anker}, there would be another net $\{\tilde{f}_n\}_{n \in N}$ in $C(X, L^1G)$ such that all the $\tilde{f}_n(X)$'s are contained in $S(G)$ and $\|g \tilde{f}_n - \tilde{f} \|$ converges uniformly to zero on compacts subsets of $G$. Defining $m_n : X \rightarrow \mathrm{Prob}(G)$ by $m_n(x) = \tilde{f}_n(x) \lambda$, we obtain a net in $C(X, \mathrm{Prob}(G))$ witnessing the amenability of the transformation group.

Therefore, our task is to find a net $\{f_n\}_{n \in N}$ in $C(X, L^1G)$ such that all the $f_n(X)$'s are contained in $S(G)$ and $\|g f_n -f_n \|$ converges to zero for every $g$ in $G$. Let $\{f_n\}_{n \in N}$ be a norm bounded net in $C(X, L^1G)$ that weak-* converges to an invariant element $f$ like in part (c) of the theorem. By Proposition~\ref{auxiliaryProp1}, we can suppose that $If_n$ belongs to $\R \indicc{X}$ for every $n$ in $N$. Let $r_n$ be a real number such that $I f_n = r_n \indicc{X}$. Since the $f_n$'s weak-* converge to $f$, the $r_n$'s converge to 1. Therefore, considering normalizations of the form $f_n / If_n$, which still belong to $C(X, L^1G)$, we can moreover suppose that $If_n = 1$ for every $n$ in $N$. Using the triangle inequality, we can verify that these normalizations still weak-* converge to $f$. As a consequence, the net $\{g f_n - f_n\}_{n \in N}$ converges weakly in $C(X, L^1G)$ to 0 for every $g$ in $G$. According to Proposition~\ref{auxiliaryProp2}, up to passing to a net of convex combinations of the $f_n$'s, we can assume that the net strongly converges to zero.

Now, recall that $C(X, L^1G)$ is a Banach lattice and that in such spaces the inequality $||a|-|b|| \leq |a-b|$ always holds. Using the fact that the action of $G$ on $C(X, L^1G)$ commutes with the absolute value, we have
$$
\|g |f_n| - |f_n| \|  \leq \| \, | gf_n -f_n| \,\| = \| gf_n -f_n \|.
$$
We can therefore assume that we have a net of positive elements of $C(X, L^1G)$ such that the net of the $\|g f_n - f_n\|$ converges to zero for every $g$ in $G$. By replacing the $f_n$'s by their absolute values, the property that $If_n = \indicc{X}$ may have been lost. Instead of this equality, we now have $If_n \geq \indicc{X}$ for every $n$ in $N$. On the maps $k$ in $C(X, L^1G)$ such that $Ik \geq \indicc{X}$, the normalization $k \mapsto k / Ik$ happens to be equivariant and 2-Lipschitz. Therefore, the normalized net of the $f_n$'s still verifies that $\|g f_n -f_n\|$ converges to zero for every $g$ in $G$.

All those modifications of the initial net produce a new net fulfilling all the conditions stated three paragraphs above.

\subsubsection*{(a) implies (d)}

Let $\{f_n\}_{n \in N}$ be a net in $C_c(X, G)^+$ as in part (c) of Proposition~\ref{equiv_def_amenability} and see this net in $C(X, L^1G)$. Since for $n$ large enough this net is norm bounded, we can suppose that it weak-* converges to some $f$ in $C(X, L^1G)^{**}$. If $g$ belongs to $G$ and $\xi$ to $C(X, L^1G)^*$, we have
$$
|(g \cdot f)(\xi) - f(\xi)| = \lim_n |\xi(g f_n) - \xi(f_n)| \leq \lim_n \|\xi\| \|g \cdot f_n - f_n\|_{C(X, L^1G)}.
$$
Since the right-hand side converges to zero because of the properties of the net $\{f_n\}_{n \in N}$, this inequality shows that $f$ is $G$-invariant.

From the fact that the $f_n$'s are positive elements of $C(X, L^1G)$, it follows that $f$ is a positive element of $C(X, L^1G)^{**}$. In fact, if $\xi$ is a positive element in $C(X, L^1G)^*$, we have $\xi (f_n) \geq 0$ for every $n$ and so $f(\xi) = \lim_n \xi (f_n) \geq 0$. 

Let us show that $I^{**}f =j(\indicc{X})$, where $I$ is the linear map defined in~\eqref{operatorI} and $j$ the canonical embedding of $C(X)$ into its double dual. The properties of the net $\{f_n\}_{n \in N}$ imply that $If_n$ converges weakly to $\indicc{X}$ in $C(X)$. Thus, $j(If_n)$ weak-* converges to $j(\indicc{X})$ in $C(X)^{**}$ and so, letting the limits below be weak-* ones,
$$
I^{**} f = \lim_n I^{**}f_n =  \lim_n j (If_n) = j(\indicc{X}).
$$

Finally, we show that $\|f\|= 1$. Since $\|I\| \leq 1$, the fact that $I^{**}f=j(\indicc{X})$ implies that $\|f\| \geq 1$. We also have
$
|f(\xi)| \leq \lim_n \|\xi\| \|f_n\| = \|\xi\|
$
for every $\xi$ in $C(X, L^1G)^*$ and therefore $\|f\| \leq 1$.

\begin{bibdiv}
\begin{biblist}

\bib{metric}{book}{
   author={Diestel, Joe},
   author={Fourie, Jan H.},
   author={Swart, Johan},
   title={The metric theory of tensor products},
   note={Grothendieck's r\'esum\'e revisited},
   publisher={American Mathematical Society},
   place={Providence, RI},
   date={2008},
   pages={x+278},
   isbn={978-0-8218-4440-3},
   review={\MR{2428264 (2010a:46005)}},
}

\bib{scha}{book}{
   author={Schaefer, Helmut H.},
   title={Banach lattices and positive operators},
   note={Die Grundlehren der mathematischen Wissenschaften, Band 215},
   publisher={Springer-Verlag},
   place={New York},
   date={1974},
   pages={xi+376},
   review={\MR{0423039 (54 \#11023)}},
}

\bib{bochn}{misc}{
   author={Unknow author},
   title={Integration vektorwertiger Funktionen I : das Bochner-Integral},
   publisher={http://www.matheplanet.com},
   date={2007},
}

\bib{These_Monod}{book}{
   author={Monod, Nicolas},
   title={Continuous bounded cohomology of locally compact groups},
   series={Lecture Notes in Mathematics},
   volume={1758},
   publisher={Springer-Verlag},
   place={Berlin},
   date={2001},
   pages={x+214},
   isbn={3-540-42054-1},
   review={\MR{1840942 (2002h:46121)}},
   doi={10.1007/b80626},
}

\bib{AVENIR}{article}{
   author={Monod, Nicolas},
   title={A Note on Topological Amenability},
   language={English},
   journal={IMRN},
   doi={10.1093/imrn/rnq238},
}

\bib{Anker}{article}{
   author={Anker, J.-Ph.},
   title={Sur la propri\'et\'e $P\sb\ast $},
   language={French, with English summary},
   journal={Monatsh. Math.},
   volume={90},
   date={1980},
   number={2},
   pages={87--90},
   issn={0026-9255},
   review={\MR{595316 (82f:43002)}},
   doi={10.1007/BF01303259},
}

\bib{Folland}{book}{
   author={Folland, Gerald B.},
   title={A course in abstract harmonic analysis},
   series={Studies in Advanced Mathematics},
   publisher={CRC Press},
   place={Boca Raton, FL},
   date={1995},
   pages={x+276},
   isbn={0-8493-8490-7},
   review={\MR{1397028 (98c:43001)}},
}

\bib{Pier}{book}{
   author={Pier, Jean-Paul},
   title={Amenable locally compact groups},
   series={Pure and Applied Mathematics (New York)},
   note={A Wiley-Interscience Publication},
   publisher={John Wiley \& Sons Inc.},
   place={New York},
   date={1984},
   pages={x+418},
   isbn={0-471-89390-0},
   review={\MR{767264 (86a:43001)}},
}

\bib{Delaroche}{article}{
   author={Anantharaman-Delaroche, Claire},
   title={Amenability and exactness for dynamical systems and their $C^\ast$-algebras},
   journal={Trans. Amer. Math. Soc.},
   volume={354},
   date={2002},
   number={10},
   pages={4153--4178 (electronic)},
   issn={0002-9947},
   review={\MR{1926869 (2004e:46082)}},
   doi={10.1090/S0002-9947-02-02978-1},
}

\bib{Autres}{article}{
   author={Brodzki, Jacek},
   author={Niblo, Graham A.},
  author={Nowak, Piotr},
  author={Wright, Nick J.},
   title={Amenable actions, invariant means and bounded cohomology},
   language={English},
   journal={arXiv:1004.0295v1},
}

\end{biblist}
\end{bibdiv}

\printindex
\end{document}